\pgfplotsset{compat=1.12}
  \newcommand{\N}{\ensuremath{\mathbb{N}}}   
  \newcommand{\R}{\ensuremath{\mathbb{R}}}   
  \renewcommand{\epsilon}{\varepsilon}       
  \renewcommand{\phi}{\varphi}		         
  \newtheorem{satz}{Satz}[section]
  \newtheorem{lemma}[satz]{Lemma}
  \newtheorem{theorem}[satz]{Theorem}
  \newtheorem{remark}[satz]{Remark}
  \theoremstyle{definition}
  \newtheorem{definition}[satz]{Definition}
  \theoremstyle{remark}
   \def\blfootnote{\xdef\@thefnmark{}\@footnotetext}
\begin{document}

\title{On Fokker-Planck Equations with \\ In- and Outflow of Mass}
\author{Martin Burger\thanks{Friedrich-Alexander-Universit\"at Erlangen-N\"urnberg, Germany (martin.burger@fau.de).} \ \
Ina Humpert\thanks{Applied Mathematics M\"unster: Institute for Analysis and Computational Mathematics,
Westf\"alische Wilhelms-Universit\"at (WWU) M\"unster, Germany (ina.humpert@uni-muenster.de).} \ \
Jan-Frederik Pietschmann\thanks{Technische Universit\"at Chemnitz, Fakult\"at f\"ur Mathematik, Germany (jfpietschmann@math.tu-chemnitz.de).} \ \
}
\maketitle
\begin{abstract}
Motivated by modeling transport processes in the growth of neurons, we present results on (nonlinear) Fokker-Planck equations where the total mass is not conserved. This is either due to in- and outflow boundary conditions or to spatially distributed reaction terms. We are able to prove exponential decay towards equilibrium using entropy methods in several situations. As there is no conservation of mass it is difficult to exploit the gradient flow structure of the differential operator which renders the analysis more challenging. In particular, classical logarithmic Sobolev inequalities are not applicable any more. Our analytic results are illustrated by extensive numerical studies.

\vspace{2ex}

\textbf{Keywords:} Fokker-Planck Equations, Entropy Methods, Exponential Decay, Mass Evolution
\end{abstract} 

\section{Introduction}
A lot of recent research effort was devoted to gain understanding of the develoment of neuron cells, which are highly relevant for brain function and disfunction, but still their growth process is far from being completely understood.
Takano et al. stated that neurons develop "structurally and functionally distinct processes called axons and dendrites" \cite[p. 1]{TakanoetatNeuronalPolarization} and while in the beginning all neurons look the same, over time they polarize and generate only one axon but multiple dendrites, see Figure \ref{figureneuron}.
Currently, most mathematical models deal with the influence of proteins in this process, modeled by systems of reaction diffusion equations. Here, we are interested in a different aspect, namly in the transport of vesicles from the cell nucleus to the neurite tips. Our model is based on the Fokker-Planck equation 
\begin{align}\label{eq:fokker}
	\partial_t \rho + \nabla \cdot (-\nabla \rho + f(\rho) \nabla V) = 0,
\end{align}
where $\rho=\rho(x,t)$ denotes the density of vesicles and $V=V(x)$ is a given potential. The function $f(\rho)$ is chosen to be either $f(\rho)=\rho$ or $f(\rho)=\rho(1-\rho)$. While the first case is just linear transport, the second choice enforces the bound $\rho \le 1$ on the vesicle density. To model the in- and outflux of vesicles, we will supplement \eqref{eq:fokker} either with flux boundary conditions or reaction terms. For the first approach we divide the boundary of the domain $\Omega$ into three parts: An inflow region, an outflow region and an insulated part (cf. \cite{BurgerPietschmannFlowCharacteristics}). On the inflow part, we prescribe a fixed flux while on the outflow region, the flux is proportional to the density. For the second model, we add no-flux boundary conditions and reaction terms for the in- and outflow of vesicles. This can be seen as an averaging of in- and outflow boundary conditions in a thin higher-dimensional structure. 

In all cases, the mass of $\rho$ changes during its evolution making it difficult to exploit the formal gradient flow structure of the equations (cf.  \cite{jordan1998variational,Otto2001}),
\begin{align}\label{eq:fokkergf}
	\partial_t \rho =\nabla \cdot (f(\rho) \nabla E'(\rho)) = 0, \qquad E(\rho) = \int_\Omega F(\rho) - \rho V~dx,
\end{align}
where $F'(\rho) = \frac{1}{f(\rho)}. $  We will partly exploit the underlying gradient flow structure by considering a relative entropy (or Bregman distance) to a stationary solution $\rho_\infty$ instead, namely
$$ E(\rho|\rho_\infty) = \int_\Omega F(\rho) - F(\rho_\infty) - F'(\rho_\infty)(\rho - \rho_\infty)~dx. $$
After verifying existence and uniqueness of a stationary solutions, we can use the dissipation of the relative entropy 
to show that solutions of the PDE decay exponentially to the respective stationary or equilibrium solution (we call a stationary solution an equilibrium if the flux vanishes, i.e. $f(\rho_\infty) \nabla E'(\rho_\infty) = 0$). 

%

%
%
%
Entropy methods are very convenient to analyse the long-time behaviour of linear and non-linear partial equations and are strongly related to functional inequalities like the logarithmic-Sobolev inequality, see \cite{MarkowichVillaniOntheTrend}. In case of in- and outflow terms in the bulk we can directly exploit the dissipation generated by the reaction terms in order to show exponential convergence to equilibrium, similar to recent approaches for reaction-diffusion equations (cf. \cite{mielke2011gradient,liero2013gradient,DesvillettesFellnerExponentialdecay,fellner2017uniform,HaskovecHittmeirMarkowichMielkeDecay}). In the case of in- and outflow boundary condition, exponential convergence needs to be shown using the bulk dissipation by the diffusion and transport. However, standard logarithmic Sobolev inequalities do not apply in our case as the total mass is not conserved. In particular, using a scaling argument, we can give a counter example in our setting. This is in contrast to many similar models in the literature, (see   \cite{MarkowichVillaniOntheTrend,ArnoldCarilloEntropiesandEquilibria}). However, in the case when the differential operator is linear, we can resort to a variation of Friedrichs' inequality taking the boundary values into account. This allows us to bound the entropy dissipation in terms of the relative entropy by which, together with a Gronwall-type argument, we recover exponential decay of the relative entropy. Combining this with a Csisz\'{a}r-Kullback inequality, one also obtains decay in the $L^1$-norm. 

This paper is organized as follows: In section \ref{sectionBiologicalBackground} we explain the biological background of the models. In section \ref{sectionBoundaryInandOut}, we present the linear model where in- and outflow terms are modeled by boundary in- and outflow. In section \ref{sectionUniformSpatialInandOut} we investigate a model with spatially distributed in- and outflow and in section \ref{sectionUniformStatialInandOutConstraint} we combine this model with a density constraint. 

\section{Biological Background and Modelling}
\label{sectionBiologicalBackground}
Neurons are the major part of the central nervous system receiving and transmitting information through the human body. A typical neuron consists of a cell nucleus and two types of `arms' originating in the cell body (see Figure \ref{figureneuron}).
\begin{figure}[t]
	\centering
    \includegraphics[width=0.4\textwidth]{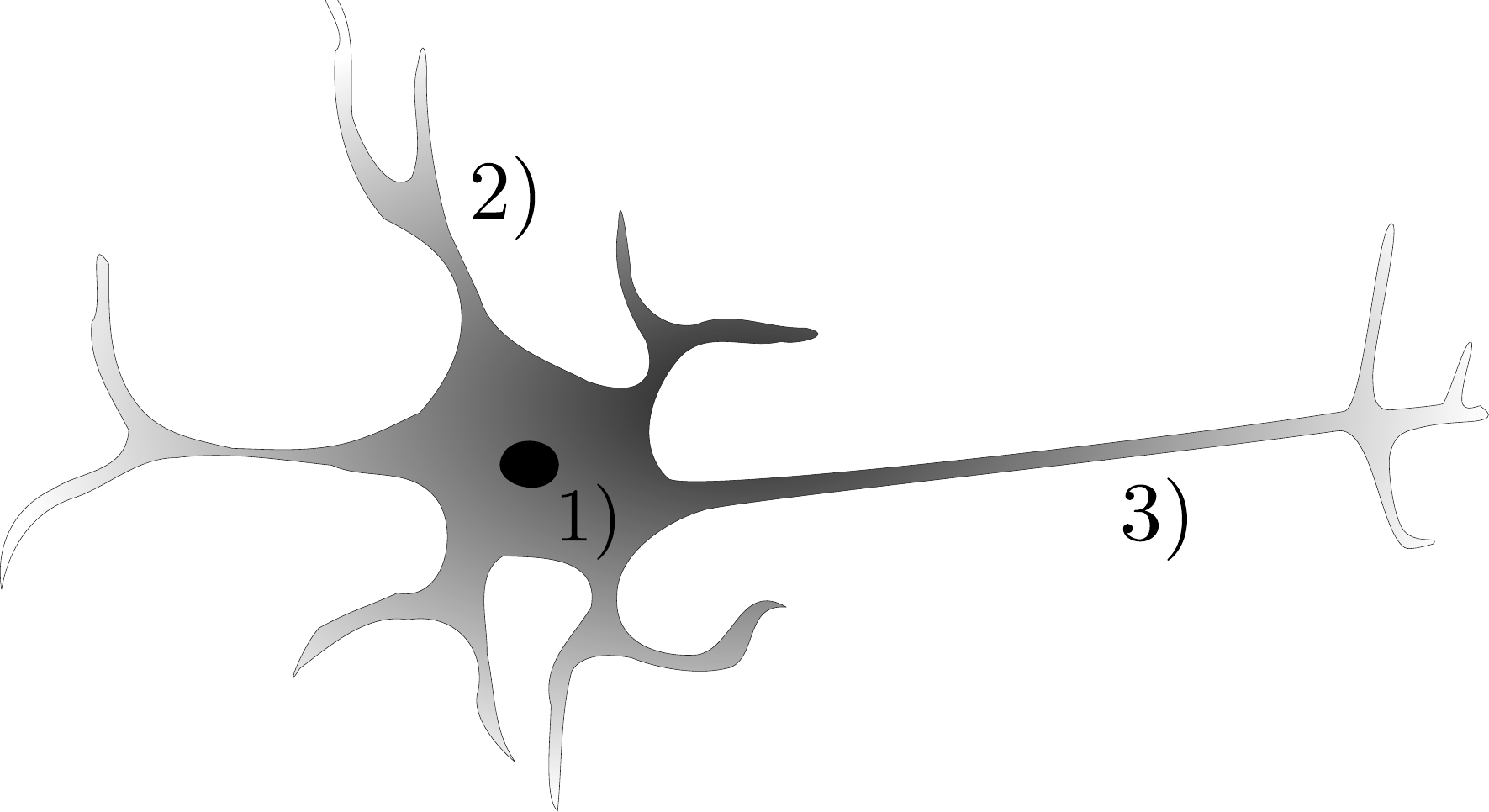}
	\caption{Sketch of a neuron (1) cell nucleus, 2) dendrite and 3) axon)}
	\label{figureneuron}
\end{figure}
These arms are called dendrites and axons. 
Each neuron has several dendrites but only one axon.
``An axon is typically a single long process that transmits signals to other neurons by the release of neurotransmitters. 
Dendrites are composed of multiple branches processes and dendritic spines, which contain neurotransmitter receptors to receive signals from other neurons" \cite[p. 1]{TakanoetatNeuronalPolarization}.
The formation of these different processes, called \textit{polarization}, is crucial for a proper functionality of the central nervous system.

The typical polarization of a neuron in vitro is divided into five stages.
In the first stage the neuron extends filopodias around the cell body,  which are ``thin, actin-rich plasma-membrane protrusions that function as antennae for cells to probe their environment" \cite[p. 1]{MattilaFilopdia}. 
In stage two these filopodia develop into neurites which in the beginning are all equivalent and seem to grow and shrink randomly. 
The actual polarization starts in stage three where one minor neurite grows quicker than the others and develops into the future axon.
In stage four the remaining neurites shrink into dendrites and in stage 5 the polarized neuron matures (see also the poster on neuronal polarization in \cite{TakanoetatNeuronalPolarization}).

For years biologists have been trying to understand the molecular machinery hidden behind this procedure developing many explanation approaches, see \cite{NaoyukietalSymmetrybreaking}, which are mainly based on the different concentration of certain proteins in the neurites see \cite{NambaetalExtracellularSignaling}. It is conjectured that the neurite growth is mainly driven through a certain vesicle flux. These vesicles are believed to merge with the cell membrane at the neurite tips making the neurite grow.
On the other hand it is conjectured that a part of the membrane can also be separated forming a vesicle and making the neuron shrink.
The flux of theses vesicles can be measured by special microscopes.

To better understand the growth of the neurite in stage two which seems randomly we suggest a mathematical model describing the vesicle flux in the axons. In the most general case we consider the three-dimensional neurite with several types of different boundary conditions. Thus, in the  three-dimensional domain $\Omega_N$ modelling the neurite the vesicle density satisfies a Fokker-Planck equation
\begin{equation}
\partial_t \rho = \nabla \cdot J, \qquad J = - \nabla \rho - f(\rho) u, 
\end{equation}
with a three-dimensional velocity field $u$. This is complemented by a boundary condition on the flux $\partial \Omega_N$ (with $n$ denoting the outward unit normal):
\begin{equation}
J \cdot n = -  a \frac{f(\rho)}\rho + b \rho, 
\end{equation}
where the nonnegative functions $a$ and $b$ model rates of in- and outflow, respectively. Typically the supports of $a$ and $b$ do not intersect and we find up to three different regions on the boundary, namely the inflow part where $a$ is positive, the outflow part where $b$ is positive, and the isolated part where $a=b=0$. The influx term in our model therefore corresponds to vesicles entering the neurite, which happens mainly at the part of the boundary that is an interface to the cell nucleus, while outflux corresponds to vesicles merging with the cell membrane, which typically happens at the neurite tip. Note that in its present form, our model does not yet explicitly account for the growth of the neurite, this might however be encoded in the transport terms when rescaling the domain.  Moreover, since frequently the directed transport along microtubuli dominates over intracellular fluid transport in neurites, it seems reasonable to assume a potential force $u= - \nabla V$. 

If we consider the neurite as an almost axisymmetric structure with small diameter, i.e., 
$$ \Omega_3 = \{ (x_1,x_2,x_3) \; : \; x_1 \in (0,1),\; (x_2,x_3) \in \Omega_2(x_1) \}, $$ 
for some $\Omega_2(x_1) \subset \R^2$ with diameter much smaller than one, we can make further approximations. In particular the equilibration orthogonal to the axis, which we assume to be the $x_1$ direction, will be fast, hence 
$$ \rho(x,t) \approx \rho_1(x_1,t) q(x,t), $$
where $q$ is a stationary solution of 
$$\nabla_{23} \cdot ( \nabla_{23} q + \frac{f(\rho_1 q)}{\rho_1} \nabla_{23} V) = 0, $$
where $\nabla_{23}$ denotes the gradient with respect to $(x_2,x_3)$. On the boundary, $q$ satisfies
\begin{equation}
 -  ( \nabla_{23} q + \frac{f(\rho_1 q)}{\rho_1} \nabla_{23} V) \cdot n = -  a ~ \frac{f(\rho_1 q)} {\rho_ 1 q} + b \rho_1 q. 
\end{equation}
Now, taking an average orthogonal to the axis in the small cross-section $\Omega_2(x_1)$, we find with Gauss' theorem
\begin{equation}  \partial_t (\rho \overline{q}) =  
\partial_{x_1}  ( \overline{q} (\partial_{x_1} \rho_1 + g(\rho_1) \partial_{x_1} V) )
+  A(\rho_1)   -  \beta \rho_1
\end{equation}
with
\begin{align*}
 \overline{q}(x_1,t) &= \int_{\Omega_2(x_1)} q(x,t) ~d(x_2,x_3),  \\
  g(\rho_1(x_1,t)) &=  \frac{1}{ \overline{q}(x_1,t) } \int_{\Omega_2(x_1)} f(\rho_1(x_1,t) q(x,t)) ~d(x_2,x_3), \\
	A(x_1,\rho(x_1,t),t) &=  \int_{\partial \Omega_2(x_1)} a(x)~ \frac{f(\rho_1 q)} {\rho_ 1 q} ~ds, \\
	\beta(x_1,t) &=  \int_{\partial \Omega_2(x_1)} b(x) q(x,t) ~ds .
\end{align*}
Hence, the in- and outflow boundaries naturally lead to analogous reaction terms in the bulk, which motivates the study of such models as well. We mention that we can obtain a two-dimensional version of the equations in geometries approximating a thin sheet as well. 

Let us mention that the models simplify in the special cases of functions $f$ we consider. In the linear case, with $f$ being the identity, we find
$$ g(\rho_1) = \rho_1, \qquad A(x_1,\rho_1,t) = \int_{\partial \Omega_2(x_1)} a(x)~ds = \alpha(x_1). $$ 
Thus, the resulting equation is simply
\begin{equation}  \partial_t (\rho \overline{q}) =  
\partial_{x_1}  ( \overline{q} (\partial_{x_1} \rho_1 + \rho_1 \partial_{x_1} V) )
+  \alpha  -  \beta \rho_1.
\end{equation}
In the crowded case, we still have
$$  A(x_1,\rho(x_1,t),t) =  \int_{\partial \Omega_2(x_1)} a(x)(1- \rho_1 q) ~ds = (a_0 - a_1\rho_1). $$ 
Hence, in both cases, the reaction terms have the same shape as the boundary terms and it is consequently natural to study the following cases:
\begin{itemize}
\item The linear Fokker-Planck equation $f(\rho)=\rho$ with in- and outflow boundary conditions, which will be the subject of section 3.

\item The crowded Fokker-Planck equation  $f(\rho)=\rho(1-\rho)$ with in- and outflow boundary conditions, which was done in \cite{BurgerPietschmannFlowCharacteristics}.

\item The linear Fokker-Planck equation  $f(\rho)=\rho$ with reaction terms of the form $a- b \rho$, which is the subject of section 4.

\item The crowded Fokker-Planck equation  $f(\rho)=\rho(1-\rho)$ with reaction terms of the form $a (1-\rho) - b \rho$, which is the subject of section 5.

\end{itemize}



\section{Linear Model with Boundary In- and Outflux}
\label{sectionBoundaryInandOut}

We start by considering the linear Fokker-Planck equation
\begin{align}
	\label{Fokker-Planck1A}
	 \partial_t \rho + \nabla \cdot J=0\;
	\text{  with  } \;
	J =- \nabla \rho + \rho \nabla V,\quad \text{ on } \Omega \times (0,T)
\end{align}
for given $T\geq 0$, $x \in \Omega \subset \mathbb{R}^n$, and where $V = V(x)$ is a given potential. The unknown function $\rho=\rho (x,t) $ describes the density of vesicles and we supplement the equation by the flux boundary conditions 
\begin{alignat}{3}
	\label{BoundaryCondition1Ain}
	-J \cdot n &= \alpha  \ \ \ &&\text{ on } \Gamma_{\text{in}} &&\times (0,T),
	\\
	\label{BoundaryCondition1Aout}
	J \cdot n &= \beta\rho  \ \ &&\text{ on } \Gamma_{\text{out}} &&\times (0,T), 
	\\
	\label{BoundaryCondition1Aisolated}
	J \cdot n &= 0 \ \ \ \ &&\text{ on } \partial\Omega\setminus\lbrace \Gamma_{\text{in}} \cup \Gamma_{\text{out}} \rbrace &&\times (0,T) ,
\end{alignat}
where $n$ denotes the outward normal. We make the following assumptions: 
\begin{itemize}
	\item[(A1)] The connected and bounded domain $\Omega \subset \mathbb{R}^n$ has Lipschitz boundary $\partial \Omega$.
	\item[(A2)] The potential satisfies $V(x) \in W^{1, \infty}(\Omega)$. 
	\item[(A3)] The initial concentration $\rho_0$ is non negative and fulfills $\rho_0 \in L^2(\Omega)$.
	\item[(A4)] The subsets $\Gamma_{\text{in}}, \Gamma_{\text{out}} \subset \partial\Omega$ of the boundary are open, disjoint and $\Gamma_{\text{out}}$ is nonempty. 
	\item[(A5)] The functions $\alpha$ and $\beta$ satisfy $\alpha \in L^\infty(\Gamma_{\text{in}})$ with $\alpha \geq \alpha_0 > 0$ and $\beta\in L^\infty(\Gamma_{\text{out}})$ with $\beta \geq \beta_0 > 0$ for some $\alpha_0,\beta_0 > 0$.
\end{itemize}
In this setting vesicles enter the domain at $\Gamma_{\text{in}}$ with the rate $\alpha$ and leave it at $\Gamma_{\text{out}}$ with rate $\beta \rho$. An additional insulated part of the domain $\partial\Omega\setminus\lbrace \Gamma_{\text{in}} \cup \Gamma_{\text{out}}\rbrace$ may exist, see also Figure \ref{1ASketchGeometry} for a sketch of such a geometry in 2D. 
Note that in \eqref{BoundaryCondition1Aout} the term $\beta\rho =0$ is zero if and only if $\rho=0$, which means that as soon as vesicles reach the exit, they can leave the domain with rate $\beta \rho$. In particular due to the boundary conditions \eqref{BoundaryCondition1Ain} - \eqref{BoundaryCondition1Aisolated} there is no mass conservation, i.e. the spatial integral over $\rho(x,t)$ changes in time.
\begin{figure}[h]
	\centering 
    \includegraphics[width=0.25\textwidth]{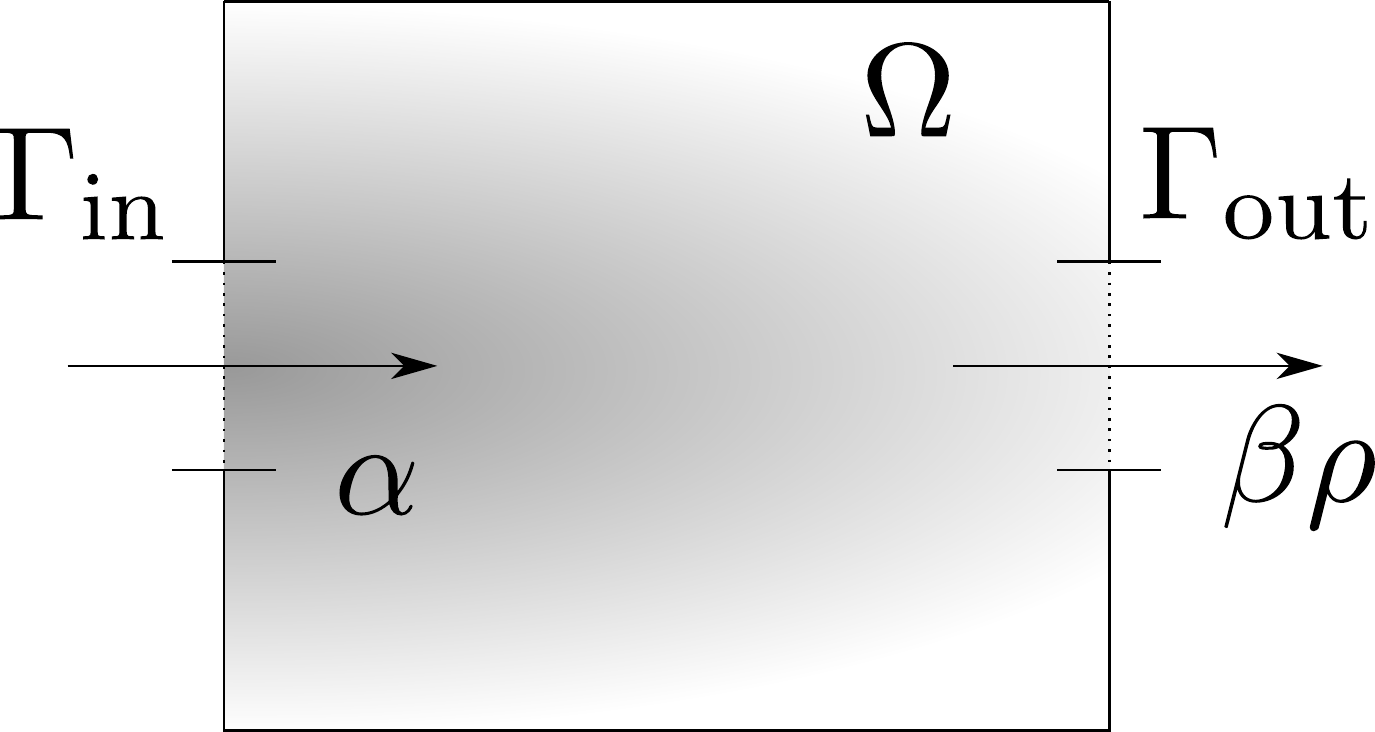}
	\caption{Sketch of the geometry of boundary in- and outflux in 2D with a possible density}
	\label{1ASketchGeometry}
\end{figure}
\begin{remark}
	Choosing the influx parameter $\alpha=0$, the stationary solution is obviously $\rho_\infty=0$ as particles only leave but never enter the domain.
	As exponential convergence can be shown for the classical Fokker-Planck equation, as for example done in \cite{MarkowichVillaniOntheTrend}, it clearly hold in the ''no influx case``, too.
	But as the quadratic relative entropy, see below, is undefined in this case, we will only consider $\alpha>0$ from now on.
\end{remark}
Exponential convergence of the relative entropy is based on the following version of Friedrichs' inequality:
\begin{theorem} \cite[p. 4]{DuduchavaPoincareInequalities}
	\label{FriedrichsInequality}
	For a bounded domain $\Omega$ and functions $u\in H^1(\Omega)$ the \textit{Friedrichs' inequality with boundary values} holds, i.e. for any non empty subset $\Gamma \subseteq \partial \Omega$ of the boundary 
	\begin{align*}
		\int_\Omega u^2 ~ dx
		\leq C_F \Big( \int_\Omega \vert \nabla u \vert^2 ~ dx + \int_{\Gamma} u^2 ~ d\sigma \Big)
	\end{align*}
	holds for a constant $C_F>0$, that we call \textit{Friedrich's constant}.
\end{theorem}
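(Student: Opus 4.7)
The natural route is a standard compactness/contradiction argument, since the inequality is a variant of the Poincaré–Friedrichs inequality in which the usual zero–mean or Dirichlet constraint is replaced by control of the $L^2$–norm on a piece of the boundary. I will assume, in accordance with the setting of the paper, that $\Omega$ is a bounded, connected Lipschitz domain and that $\Gamma \subseteq \partial\Omega$ has positive surface measure (this is the sensible reading of ``non empty'' when $\Gamma$ is taken to be open in a Lipschitz boundary).

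The plan is the following. Suppose the claimed inequality fails. Then for every $n \in \N$ we can find $u_n \in H^1(\Omega)$ with
\[
 \int_\Omega u_n^2 \, dx = 1, \qquad \int_\Omega |\nabla u_n|^2 \, dx + \int_\Gamma u_n^2 \, d\sigma \;\le\; \frac{1}{n}.
\]
In particular, $(u_n)$ is bounded in $H^1(\Omega)$, so after passing to a subsequence it converges weakly in $H^1(\Omega)$ and, by the Rellich–Kondrachov compact embedding, strongly in $L^2(\Omega)$ to some limit $u \in H^1(\Omega)$ with $\|u\|_{L^2(\Omega)} = 1$. Lower semicontinuity of the Dirichlet seminorm under weak convergence, together with $\|\nabla u_n\|_{L^2} \to 0$, yields $\nabla u = 0$ almost everywhere; connectedness of $\Omega$ then forces $u$ to be a constant $c$, and normalization gives $|c|\cdot |\Omega|^{1/2}=1$, hence $c \neq 0$.

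To derive a contradiction I would invoke continuity of the trace operator $T\colon H^1(\Omega) \to L^2(\partial\Omega)$, which is available for bounded Lipschitz domains. Since $u_n \rightharpoonup u$ in $H^1(\Omega)$, we have $T u_n \rightharpoonup T u = c$ in $L^2(\partial\Omega)$, and in particular $T u_n |_\Gamma \rightharpoonup c$ in $L^2(\Gamma)$. By weak lower semicontinuity of the norm,
\[
 |c|^2 \, \sigma(\Gamma) \;=\; \|c\|_{L^2(\Gamma)}^2 \;\le\; \liminf_{n\to\infty} \int_\Gamma u_n^2 \, d\sigma \;=\; 0,
\]
so $c = 0$, contradicting $\|u\|_{L^2(\Omega)} = 1$.

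The main technical point is not the contradiction itself but the two functional-analytic inputs it relies on, namely the compact embedding $H^1(\Omega) \hookrightarrow\hookrightarrow L^2(\Omega)$ and the continuity of the trace on a Lipschitz boundary; both are classical under assumption (A1) and can simply be quoted. The only mild subtlety is ensuring that $\Gamma$ carries positive $(n{-}1)$-dimensional Hausdorff measure, which is implicit in the geometric setup of Section~3 where $\Gamma_{\mathrm{out}}$ is a nonempty open subset of $\partial \Omega$; with this in force, the argument above produces a constant $C_F = C_F(\Omega,\Gamma)$, as desired.
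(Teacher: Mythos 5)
The paper does not actually prove this statement: it is quoted verbatim from the cited reference (Duduchava), so there is no internal proof to compare against. Your compactness/contradiction argument is the standard proof of such Poincar\'e--Friedrichs variants and it is correct as written: normalization of a minimizing sequence, Rellich--Kondrachov to get strong $L^2$ convergence, weak lower semicontinuity of the Dirichlet seminorm to force the limit to be a nonzero constant on the connected domain, and continuity (indeed compactness) of the trace operator to force that constant to vanish on $\Gamma$, a contradiction. You also correctly flag the one real gap in the statement as printed: ``non empty'' must be read as ``of positive surface measure,'' since for a $\Gamma$ of measure zero the inequality reduces to $\|u\|_{L^2}^2\le C_F\|\nabla u\|_{L^2}^2$, which fails for constants; this reading is consistent with (A4), where $\Gamma_{\mathrm{out}}$ is open and nonempty in $\partial\Omega$. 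Likewise, the blanket phrase ``bounded domain'' needs the Lipschitz regularity of (A1) for both the compact embedding and the trace theorem, which you correctly import. The only thing your route does not deliver -- inherent to any contradiction argument -- is an explicit value of $C_F$; this matters later in the paper, where the authors extract a quantitative decay rate from $C_F$ by solving the associated eigenvalue problem in one dimension, but it does not affect the validity of the theorem itself.
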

More precisely, we consider the quadratic relative entropy
\begin{align*}
		E(\rho\vert \rho_\infty) 
	= \frac{1}{2} \int_\Omega \frac{(\rho-\rho_\infty)^2}{\rho_\infty} ~ dx,
\end{align*}
where $\rho_\infty$ denotes the solution to the stationary version of \eqref{Fokker-Planck1A}--\eqref{BoundaryCondition1Aisolated} (this will be made precise below). The main result of this section is the following theorem: 
%
\begin{theorem} 
	\label{1AEexpDecay}
	Let (A1)-(A5) hold, then the solution $\rho$ to equation \eqref{Fokker-Planck1A} with initial datum $\rho_0$ and boundary conditions \eqref{BoundaryCondition1Ain} - \eqref{BoundaryCondition1Aisolated} obeys the following exponential decay towards equilibrium
	\begin{align*}
	\vert\vert \rho - \rho_\infty \vert\vert^2_{L^1(\Omega)} \leq 2 \max \lbrace \rho_\infty \rbrace E(\rho_0\vert \rho_\infty) e^{-C_F K_1 t} \ \ \forall t \ge 0,
	\end{align*}
	where $K_1=\min\lbrace \frac{\beta_0}{2}, 1\rbrace, \, C_F$ being the constant of Friedrich's inequality and $\rho_\infty$ being the stationary solution.
\end{theorem}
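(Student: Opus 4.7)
The plan is to apply the three-stage entropy method: derive a dissipation identity for $E(\rho|\rho_\infty)$, convert it into a differential inequality $\frac{d}{dt} E \leq -c E$ via Friedrichs' inequality with boundary values, and then translate the exponential decay of $E$ into an $L^1$ bound by Cauchy--Schwarz. Throughout I assume that a strictly positive stationary solution $\rho_\infty$ has been constructed (existence, uniqueness and positivity of $\rho_\infty$ presumably being established separately, with positivity plausible because $\alpha \geq \alpha_0 > 0$ forces a constant inflow of mass).

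The central computation is the dissipation identity. Setting $u = \rho / \rho_\infty$, linearity of the equation gives $J = -\rho_\infty \nabla u + u J_\infty$, where $J_\infty = -\nabla \rho_\infty + \rho_\infty \nabla V$ is divergence-free by stationarity. Then
\begin{align*}
\frac{d}{dt} E(\rho|\rho_\infty) &= \int_\Omega (u-1) \partial_t \rho \, dx \\
&= \int_\Omega \nabla u \cdot J \, dx - \int_{\partial \Omega} (u-1) J \cdot n \, d\sigma.
\end{align*}
The bulk integrand equals $-\rho_\infty |\nabla u|^2 + \frac{1}{2} \nabla(u^2) \cdot J_\infty$; a further integration by parts transfers a term $\frac{1}{2}\int_{\partial \Omega} u^2 J_\infty \cdot n \, d\sigma$ to the boundary (using $\nabla \cdot J_\infty = 0$). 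Combining this with the $(u-1) J \cdot n$ contribution, using the prescribed boundary fluxes of $J$ and $J_\infty$ on $\Gamma_{\text{in}}$, $\Gamma_{\text{out}}$ and the insulated part, and invoking $\int_{\partial \Omega} J_\infty \cdot n \, d\sigma = 0$ (divergence theorem) to cancel the constant boundary pieces, the boundary contributions collapse to a manifestly non-positive expression:
\begin{align*}
\frac{d}{dt} E(\rho|\rho_\infty) = -\int_\Omega \rho_\infty |\nabla (u-1)|^2 \, dx - \frac{1}{2}\int_{\Gamma_{\text{in}}} \alpha (u-1)^2 \, d\sigma - \frac{1}{2}\int_{\Gamma_{\text{out}}} \beta \rho_\infty (u-1)^2 \, d\sigma.
\end{align*}

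With this identity in hand, I would apply Friedrichs' inequality (the theorem above) to $v = u - 1$ with $\Gamma = \Gamma_{\text{out}}$ (non-empty by (A4)), discarding the non-negative $\Gamma_{\text{in}}$ term. Using the positive upper and lower bounds on $\rho_\infty$ together with $\beta \geq \beta_0 > 0$, the remaining dissipation is bounded below by $\rho_{\min}\,K_1 \bigl(\|\nabla v\|_{L^2(\Omega)}^2 + \|v\|_{L^2(\Gamma_{\text{out}})}^2\bigr)$ with $K_1 = \min\{\beta_0/2, 1\}$. Friedrichs' inequality then converts this into a lower bound proportional to $\|v\|_{L^2(\Omega)}^2$, which is comparable to $E$ via $E \leq \frac{1}{2}\rho_{\max}\|v\|_{L^2(\Omega)}^2$. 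Gronwall's lemma now yields exponential decay of $E$ at a rate of the announced form $C_F K_1$ (up to factors depending on $\rho_{\min},\rho_{\max}$ which I would absorb into $C_F$ or treat implicitly).

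Finally, Cauchy--Schwarz gives the Csisz\'ar--Kullback-type inequality
\begin{align*}
\|\rho - \rho_\infty\|_{L^1(\Omega)}^2 \leq \left(\int_\Omega \rho_\infty \, dx\right) \int_\Omega \frac{(\rho - \rho_\infty)^2}{\rho_\infty} \, dx \leq 2 \max \{\rho_\infty\}\, E(\rho|\rho_\infty),
\end{align*}
which combined with the exponential decay of $E$ yields the claimed bound. The main obstacle I anticipate is the careful bookkeeping of the boundary terms in the dissipation identity: the crucial cancellation of the constants relies on the algebraic identity $\frac{u^2}{2} - u + 1 = \frac{1}{2}\bigl((u-1)^2 + 1\bigr)$ together with the global balance $\int_{\Gamma_{\text{in}}} \alpha = \int_{\Gamma_{\text{out}}} \beta \rho_\infty$ coming from the divergence theorem applied to $J_\infty$. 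A secondary technical point is the uniform positivity $\rho_\infty \geq \rho_{\min} > 0$, which is needed both for the relative entropy to be well-defined and to transfer the weighted dissipation estimate to the unweighted Friedrichs setting.
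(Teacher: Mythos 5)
Your proposal is correct and follows essentially the same route as the paper: quadratic relative entropy, the dissipation identity \eqref{1Adissipationcomparision} (which you reach by writing $J=-\rho_\infty\nabla u+uJ_\infty$ and integrating by parts rather than by adding the stationary equation tested with $\phi=u-1$, but the resulting identity and the cancellation of the constant boundary pieces via $\int_{\Gamma_{\text{in}}}\alpha\,d\sigma=\int_{\Gamma_{\text{out}}}\beta\rho_\infty\,d\sigma$ are identical), then Friedrichs' inequality with boundary values on $\Gamma_{\text{out}}$, Gronwall, and Cauchy--Schwarz. The only loose ends you leave --- the handling of the weights $\rho_{\min},\rho_{\max}$ when passing from the weighted dissipation to the unweighted Friedrichs inequality, and the factor $\int_\Omega\rho_\infty\,dx$ versus $\max\{\rho_\infty\}$ in the final $L^1$ estimate --- are present to the same degree in the paper's own argument.
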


\subsection{Existence and Uniqueness for the Time Dependent Problem}
As $\rho=\rho (x,t) $ describes the density of vesicles, one naturally expects $\rho \geq 0$, which is proven below. But first we introduce the notion of weak solution and prove an existence result.

\begin{definition}
	\label{WeakSolution1A}
	We say that a function $\rho \in L^2(0,T;H^1(\Omega))$ with $\partial_t \rho \in L^2(0,T;H^{-1}(\Omega))$ is a \textit{weak solution} to equation \eqref{Fokker-Planck1A} supplemented with the boundary conditions \eqref{BoundaryCondition1Ain}-\eqref{BoundaryCondition1Aisolated} if the identity
	\begin{align*}
		\int_\Omega \partial_t \rho \, \phi ~ dx
		- \int_\Omega J \cdot \nabla \phi ~ dx 
		+ \int_{\Gamma_{\text{out}}} \beta \rho \phi ~ d\sigma
		= \int_{\Gamma_{\text{in}}} \alpha \phi~  d\sigma
	\end{align*}
	holds for all $\phi \in H^1(\Omega)$ and a. e. time $0 \leq t \leq T$.
	We can rewrite the weak formulation in terms of the \textit{Slotboom variable} $u\coloneqq \rho e^{-V}$. As the transformed flux is given as $J= -e^V \nabla u$, we obtain
	\begin{align}
		\label{1AweakformulationSlotboom}
		\int_\Omega e^V \partial_t u \, \phi ~ dx
		+ \int_\Omega e^V\nabla u \cdot \nabla \phi ~ dx 
		+ \int_{\Gamma_{\text{out}}} \beta ue^V \phi ~ d\sigma
		= \int_{\Gamma_{\text{in}}} \alpha \phi ~ d\sigma.
	\end{align}
\end{definition}

\begin{lemma}
	Let (A1) - (A5) hold. Then there exists a unique weak solution to equation \eqref{Fokker-Planck1A} in the sense of definition \ref{WeakSolution1A}.
\end{lemma}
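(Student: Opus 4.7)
The plan is to work with the Slotboom formulation \eqref{1AweakformulationSlotboom}, which produces a symmetric bilinear structure and allows a direct application of the standard Lions--Galerkin scheme for parabolic equations with Neumann-type boundary data. Writing $m(u,v) = \int_\Omega e^V u v \, dx$ and
$$a(u,v) = \int_\Omega e^V \nabla u \cdot \nabla v \, dx + \int_{\Gamma_{\text{out}}} \beta e^V u v \, d\sigma, \qquad \ell(\phi) = \int_{\Gamma_{\text{in}}} \alpha \phi \, d\sigma,$$
the weak problem reads: find $u \in L^2(0,T;H^1(\Omega))$ with $\partial_t u \in L^2(0,T;H^{-1}(\Omega))$ such that $\frac{d}{dt}m(u,\phi) + a(u,\phi) = \ell(\phi)$ for all $\phi \in H^1(\Omega)$.

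The first step is to verify that $a$ is continuous and coercive on $H^1(\Omega)$. Continuity follows from (A2), (A5) and the trace theorem. For coercivity, assumption (A2) gives $e^V \ge e^{-\|V\|_\infty} =: c_V > 0$, and since $\Gamma_{\text{out}}$ is nonempty by (A4) and $\beta \ge \beta_0 > 0$ by (A5), Theorem \ref{FriedrichsInequality} applied with $\Gamma = \Gamma_{\text{out}}$ yields
$$a(u,u) \ge c_V \left( \int_\Omega |\nabla u|^2 \, dx + \beta_0 \int_{\Gamma_{\text{out}}} u^2 \, d\sigma \right) \ge \frac{c_V \min\{1,\beta_0\}}{\max\{1,C_F\}} \|u\|_{H^1(\Omega)}^2.$$
The functional $\ell$ is bounded on $H^1(\Omega)$ by (A5) and the trace theorem, and $m$ is equivalent to the standard $L^2$ inner product.

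Next I would set up a Galerkin approximation with a countable basis $\{w_k\}$ of $H^1(\Omega)$ (for instance, eigenfunctions of the Neumann Laplacian), define $u_N(t) = \sum_{k=1}^N c_k^N(t) w_k$ with initial data obtained by $H^1$-projection of $u_0 = \rho_0 e^{-V}$, and plug $\phi = w_k$ into \eqref{1AweakformulationSlotboom}. This yields a linear ODE system with Lipschitz right-hand side, so Picard--Lindelöf provides global-in-time solvability. Testing with $u_N$ itself, using coercivity and Young's inequality on $\ell(u_N)$ (with the trace term absorbed into $\|u_N\|_{H^1}^2$), I obtain
$$\frac{1}{2}\frac{d}{dt} m(u_N,u_N) + \frac{c}{2} \|u_N\|_{H^1(\Omega)}^2 \le C\|\alpha\|_{L^\infty(\Gamma_{\text{in}})}^2,$$
so $u_N$ is uniformly bounded in $L^\infty(0,T;L^2(\Omega)) \cap L^2(0,T;H^1(\Omega))$, and then from the equation itself $\partial_t u_N$ is uniformly bounded in $L^2(0,T;H^{-1}(\Omega))$.

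The main obstacle in passing to the limit is the nonlinear-looking boundary integral $\int_{\Gamma_{\text{out}}} \beta e^V u_N \phi \, d\sigma$: weak convergence $u_N \rightharpoonup u$ in $L^2(0,T;H^1(\Omega))$ is not sufficient because traces are not weakly continuous from $L^2(0,T;L^2(\partial\Omega))$ alone. I would overcome this via the Aubin--Lions lemma: the embedding $H^1(\Omega) \hookrightarrow L^2(\partial\Omega)$ is compact (boundary trace is compact into $L^2$), so the uniform bounds yield strong convergence of $u_N$ in $L^2(0,T;L^2(\partial\Omega))$, allowing passage to the limit in every term of \eqref{1AweakformulationSlotboom}. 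Setting $\rho = e^V u$ recovers the desired weak solution. For uniqueness, the difference $w = u_1 - u_2$ of two weak solutions satisfies the homogeneous problem; testing with $w$ (which is admissible because $w \in L^2(0,T;H^1)$ and $\partial_t w \in L^2(0,T;H^{-1})$, giving $w \in C([0,T];L^2)$ and the chain rule $\frac{d}{dt} m(w,w) = 2\langle \partial_t w, w\rangle$) yields
$$\frac{1}{2}\frac{d}{dt} m(w,w) + a(w,w) = 0,$$
and coercivity together with $w(0)=0$ forces $w \equiv 0$.
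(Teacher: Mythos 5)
Your proof is correct and follows essentially the same route as the paper: pass to the Slotboom variable, control the bilinear form via Friedrichs' inequality with boundary values on $\Gamma_{\text{out}}$, and run the standard Galerkin scheme (which the paper simply delegates to \cite[7.1.2]{EvansPartialDifferentialEquations}). Two minor remarks: you actually establish full coercivity where the paper only records a G\aa rding inequality, and your Aubin--Lions/compact-trace step is superfluous, since the boundary term is linear in $u_N$ and the trace operator is a bounded linear map $H^1(\Omega)\to L^2(\partial\Omega)$, so weak convergence in $L^2(0,T;H^1(\Omega))$ already lets you pass to the limit there.
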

\begin{proof}
	We use the Slotboom formulation of the problem and use the second and third term on the left side of \eqref{1AweakformulationSlotboom} to define the continuous but non-coercive bilinear form
	\begin{align*}
		B[u, \phi]
		= \int_\Omega e^V\nabla u \cdot \nabla \phi ~ dx 
		+ \int_{\Gamma_{\text{out}}} \beta ue^V \phi ~ d\sigma.
	\end{align*}
	This form fulfills a \textit{G\aa rding inequality}, see e.g. \cite[6.2.2 Theorem 2]{EvansPartialDifferentialEquations}, so that for all $u \in H^1(\Omega)$ we have
	\begin{align}\label{eq:garding}
	\begin{split}
		B[u,u]
		&= \int_\Omega e^V \vert\nabla u \vert^2 ~ dx 
		+ \int_{\Gamma_{\text{out}}} \beta e^V \vert u \vert^2 ~ d\sigma
		\\
		&\geq 
		K_2 \int_\Omega \vert\nabla u \vert^2 ~ dx 
		+ K_2 \beta_0 \int_{\Gamma_{\text{out}}} \vert u \vert^2 ~ d\sigma 		
		= \eta \Vert u \Vert^2_{H^1(\Omega)} - \eta \Vert u \Vert^2_{L^2(\Omega)},
		\end{split}
	\end{align}
	where $K_2 = \inf \lbrace e^V \rbrace, ~ \eta = K_2 C_F^{-1} \min\lbrace 1,\beta_0 \rbrace$ and $C_F$ is the constant coming from the Friedrich's inequality with boundary values, see theorem \ref{FriedrichsInequality}.
	Existence of a unique solution $u$ then follows directly form the ideas stated in \cite[7.1.2]{EvansPartialDifferentialEquations} together with the trace theorem \cite[5.5 Theorem 1]{EvansPartialDifferentialEquations}, applied to the right side of \eqref{1AweakformulationSlotboom}.
\end{proof}

\begin{lemma}
	\label{1AWeakSolNonNeg}
	The weak solution $\rho$ to equation \eqref{Fokker-Planck1A} is non-negative, i.e. $\rho \geq 0 $ in a.e. $x \in \Omega$ and every $t\in T$.
\end{lemma}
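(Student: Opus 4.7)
The plan is to argue by the classical Stampacchia truncation method applied to the Slotboom formulation \eqref{1AweakformulationSlotboom}. Since $e^{-V}$ is bounded and bounded away from zero (as $V \in W^{1,\infty}(\Omega)$), non-negativity of $\rho$ is equivalent to non-negativity of $u = \rho e^{-V}$. I would work with $u$ because the bilinear form appears in symmetric form in the Slotboom variable and the algebra with the truncation is cleaner.

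First, I would write $u = u^+ - u^-$ with $u^- := \max(-u, 0) \geq 0$. Since $u \in L^2(0,T; H^1(\Omega))$, standard results on composition of Lipschitz maps with $H^1$ functions give $u^- \in L^2(0,T; H^1(\Omega))$ with $\nabla u^- = -\mathbf{1}_{\{u<0\}} \nabla u$. I would then insert $\phi = -u^-(t)$ as test function in \eqref{1AweakformulationSlotboom} for almost every $t$. Using the pointwise identities $u \cdot u^- = -(u^-)^2$ and $\nabla u \cdot \nabla u^- = -|\nabla u^-|^2$, together with the chain rule for the time derivative (valid since $u \in L^2(0,T;H^1) \cap H^1(0,T;H^{-1})$ implies $t \mapsto \tfrac{1}{2}\int e^V (u^-)^2 dx$ is absolutely continuous with derivative $\langle \partial_t u, -u^- \rangle_{H^{-1},H^1}$), this yields
\begin{align*}
\frac{1}{2}\frac{d}{dt}\int_\Omega e^V (u^-)^2 \, dx + \int_\Omega e^V |\nabla u^-|^2 \, dx + \int_{\Gamma_{\text{out}}} \beta e^V (u^-)^2 \, d\sigma = -\int_{\Gamma_{\text{in}}} \alpha \, u^- \, d\sigma.
\end{align*}

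Second, I would exploit the sign structure: by assumption (A5), $\alpha \geq 0$ and $\beta \geq 0$, and by construction $u^- \geq 0$, so the right-hand side is non-positive and the two last terms on the left are non-negative. Hence
\begin{align*}
\frac{d}{dt}\int_\Omega e^V (u^-)^2 \, dx \leq 0.
\end{align*}
Since $\rho_0 \geq 0$ by (A3), we have $u(\cdot, 0) = \rho_0 e^{-V} \geq 0$, so $u^-(\cdot, 0) \equiv 0$ and the monotonicity forces $\int_\Omega e^V (u^-)^2 dx \equiv 0$ for all $t$. The positivity of the weight $e^V$ then implies $u^- = 0$ almost everywhere in $\Omega \times (0,T)$, i.e. $\rho = e^V u \geq 0$ a.e.

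The only real technical point is justifying the chain rule identity for $\partial_t u$ tested against $u^-$; this is standard (see e.g. Evans, Chapter 5.9 or Showalter's monograph on monotone operators), following from a mollification-in-time argument that uses the embedding $L^2(0,T;H^1) \cap H^1(0,T;H^{-1}) \hookrightarrow C([0,T];L^2)$ so that pointwise-in-time values make sense. Everything else is an algebraic manipulation exploiting the sign of the boundary data.
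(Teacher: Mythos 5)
Your proposal is correct and follows essentially the same route as the paper: both test the Slotboom formulation \eqref{1AweakformulationSlotboom} with (a sign convention of) the negative part of $u$, use the signs of $\alpha$, $\beta$ and the truncation to discard all but the time-derivative term, and conclude from $u_0^-\equiv 0$. The only difference is that you spell out the chain-rule justification for the $\partial_t$ term, which the paper leaves implicit.
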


\begin{proof}
	If we use the Slotboom-formulation of the problem, $u$ is non-negative if and only if $\rho$ is non-negative.	
	Choosing the test function $u^{-}= \min \lbrace u,0\rbrace$, the weak formulation yields
	\begin{align*}
		\partial_t \int_\Omega e^V \frac{\vert u^-\vert^2}{2} ~dx 
		+ \int_\Omega e^V \vert \nabla u^-\vert^2 ~ dx 
		+ \int_{\Gamma_{\text{out}}} \beta e^V \vert u^-\vert^2 ~ d\sigma
		= \int_{\Gamma_{\text{in}}} \alpha u^- ~ d\sigma. 
	\end{align*}
	Omitting the non-positive right hand side as well as the non-negative second and the third term on the left hand side, and integration with respect to time 
    gives
	\begin{align*}
		\int_\Omega e^V \frac{\vert u^-\vert^2}{2} ~dx 
		\leq 
		\int_{\Omega} e^V \frac{\vert u_0^-\vert^2}{2} ~dx.
	\end{align*}
	As $u_0$ is assumed to be non-negative this yields the assertion.
\end{proof}

\subsection{Stationary Solutions}
We denote by $\rho_\infty$ the (weak) solution to 
\begin{align}\label{eq:rhostationary}
	 \nabla \cdot \left(- \nabla \rho_\infty + \rho_\infty \nabla V\right) = 0\quad \text{ on } \Omega,
\end{align}
supplemented with boundary conditions \eqref{BoundaryCondition1Ain}--\eqref{BoundaryCondition1Aisolated}. Our first result is the following.
\begin{lemma}
	\label{stationarySolutionBounded}
	Any stationary solution $\rho_\infty \in H^1(\Omega)$ is bounded and strictly positive, i.e. there exists constants $C_0>0$ and $C>0$, such that  $C_0 < \rho_\infty \leq C$ for a.e. $x \in \Omega$.
\end{lemma}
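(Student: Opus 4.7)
The natural reduction is to the Slotboom variable $u_\infty:=\rho_\infty e^{-V}$, which satisfies the uniformly elliptic equation
\begin{align*}
-\nabla\cdot(e^V\nabla u_\infty)=0 \quad\text{in }\Omega,
\end{align*}
together with the transformed boundary conditions $e^V\nabla u_\infty\cdot n=\alpha$ on $\Gamma_{\text{in}}$, $e^V\nabla u_\infty\cdot n=-\beta e^V u_\infty$ on $\Gamma_{\text{out}}$, and $e^V\nabla u_\infty\cdot n=0$ on the remaining part of $\partial\Omega$. Since $V\in W^{1,\infty}(\Omega)$, the weight $e^V$ is bounded above and below by positive constants, and it is therefore enough to establish $0<c_0\leq u_\infty\leq c_1$ a.e.\ in $\Omega$.

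Non-negativity of $u_\infty$ is obtained by testing the stationary weak formulation with $u_\infty^-=\min\{u_\infty,0\}$ exactly as in the proof of Lemma \ref{1AWeakSolNonNeg}, using $\alpha,\beta\geq 0$. For the upper bound I would run a Stampacchia--De Giorgi truncation: testing with $\phi=(u_\infty-k)^+$ and, since $u_\infty\geq 0$, keeping the Robin term on $\Gamma_{\text{out}}$ on the coercive side, Friedrichs' inequality (Theorem \ref{FriedrichsInequality}) with $\Gamma=\Gamma_{\text{out}}$ gives
\begin{align*}
\|(u_\infty-k)^+\|_{H^1(\Omega)}^2 \leq C\|\alpha\|_{L^\infty}\int_{\Gamma_{\text{in}}}(u_\infty-k)^+\,d\sigma.
\end{align*}
Combining the trace theorem with H\"older's inequality on the superlevel set $\{u_\infty>k\}\cap\Gamma_{\text{in}}$ produces a standard Stampacchia-type recursion on the distribution function of $u_\infty|_{\Gamma_{\text{in}}}$, and De Giorgi's iteration lemma then furnishes a uniform $L^\infty$-bound.

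For the strict lower bound, observe first that $u_\infty\not\equiv 0$: if it were, then $J=-e^V\nabla u_\infty$ would vanish, contradicting $-J\cdot n=\alpha\geq\alpha_0>0$ on $\Gamma_{\text{in}}$. The De Giorgi--Nash--Moser theory yields H\"older continuity of the non-negative function $u_\infty$ on $\overline{\Omega}$, and the weak Harnack inequality upgrades non-negativity to $u_\infty>0$ in the interior of $\Omega$. Strict positivity up to the boundary then follows from Hopf's lemma: a boundary zero would force $\nabla u_\infty\cdot n<0$ strictly, which is incompatible with $e^V\nabla u_\infty\cdot n=\alpha>0$ on $\Gamma_{\text{in}}$, with $e^V\nabla u_\infty\cdot n=-\beta e^V u_\infty=0$ at such a point on $\Gamma_{\text{out}}$, and with the vanishing no-flux condition on the remaining boundary.

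The step I expect to be the main obstacle is the Stampacchia iteration, because the forcing lives only on a portion of the boundary rather than in the bulk: one must balance Friedrichs' inequality with a sharp boundary trace estimate to produce a recursion with superlinear exponent, so that De Giorgi's lemma closes. A secondary delicate point is strict positivity at the boundary under the mixed Neumann/Robin data, for which a Hopf-type lemma adapted to divergence-form operators with $L^\infty$ coefficients and a non-smooth boundary partition is required.
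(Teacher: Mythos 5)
Your reduction to the Slotboom variable, the nonnegativity argument, and the idea of truncating for the upper bound all match the paper's strategy, but both of your key steps are problematic. For the upper bound, the full Stampacchia--De Giorgi iteration you sketch (and yourself flag as the main obstacle) is neither carried out nor needed: the paper closes the argument with a \emph{single} truncation. Testing with $\phi=(u_\infty-C)_+$ and adding the trivial term $C\int_{\Gamma_{\text{out}}}\beta e^V(u_\infty-C)_+\,d\sigma$, one estimates the influx term by a weighted Cauchy inequality plus the trace inequality, absorbs the resulting volume term into the Friedrichs-coercive left-hand side, and controls the leftover boundary contribution using the identity $\int_{\Gamma_{\text{out}}}\beta u_\infty e^V\,d\sigma=\int_{\Gamma_{\text{in}}}\alpha\,d\sigma$ obtained from the test function $\phi\equiv 1$; choosing the truncation level $C$ large enough then forces $(u_\infty-C)_+=0$ directly. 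You should either carry out your recursion explicitly or adopt this one-shot version; as written, the step you identify as the crux is left open.

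The genuine gap is in the strict lower bound. Under (A1) the domain is only Lipschitz, so Hopf's boundary point lemma is not available: it requires an interior ball (or comparable) condition, and your contradiction argument moreover evaluates $\nabla u_\infty\cdot n$ pointwise at a putative boundary zero, which is not meaningful for an $H^1$ weak solution of a mixed Neumann/Robin problem --- near the interfaces where $\Gamma_{\text{in}}$, $\Gamma_{\text{out}}$ and the insulated part meet, the solution need not be $C^1$ up to the boundary. The interior weak Harnack inequality only gives positivity on compact subsets of $\Omega$, which does not produce the uniform constant $C_0>0$ valid a.e.\ in all of $\Omega$ that the lemma asserts; for that you need positivity up to the boundary. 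The paper sidesteps all of this by first establishing $u_\infty\in L^\infty(\Omega)\cap H^1(\Omega)$ and then invoking a strong-positivity theorem for weak solutions, \cite[Theorem 4]{LeHalStrongPositivity}, which is tailored to exactly this low-regularity, mixed-boundary setting. Some such quantitative result (or a boundary Harnack / continuity-up-to-the-boundary argument valid on Lipschitz domains) is needed to close your proof.
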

\begin{proof}
	As $V$ is bounded, $\rho_\infty$ is bounded if and only if the stationary Slotboom-variable $u_\infty$, which satisfies  
	\begin{align}
		\label{1AweakformulationSlotboomStationary}
		\int_\Omega e^V\nabla u_\infty \cdot \nabla \phi ~ dx 
		+ \int_{\Gamma_{\text{out}}} \beta u_\infty e^V \phi ~ d\sigma
		= \int_{\Gamma_{\text{in}}} \alpha \phi ~ d\sigma
	\end{align}
	for every $\phi \in H^1(\Omega)$, is bounded.
	First we choose $\phi \equiv 1$ in \eqref{1AweakformulationSlotboomStationary} obtaining
	\begin{align}
		\label{TestFunctionTrivial}
		\int_{\Gamma_{\text{out}}} \beta u_\infty e^V ~ d\sigma
		= \int_{\Gamma_{\text{in}}} \alpha ~ d\sigma.
	\end{align}
	Next we choose $\phi = ( u_\infty -C)_+$ in \eqref{1AweakformulationSlotboomStationary} for some arbitrary constant $C>0$ and obtain after adding an appropriate trivial term
	\begin{gather*}
		\int_\Omega e^V\vert \nabla (u_\infty-C)_+ \vert ^2 ~ dx 
		+ \int_{\Gamma_{\text{out}}} \beta (u_\infty -C)_+^2 e^V ~ d\sigma
		+ C \int_{\Gamma_{\text{out}}} \beta e^V ( u_\infty -C)_+ ~d\sigma \\
		= \int_{\Gamma_{\text{in}}} \alpha (u_\infty -C)_+ ~ d\sigma.
	\end{gather*}
	Applying Friedrich's inequality from theorem \ref{FriedrichsInequality} then results in
	\begin{align}
		\label{Inequality1}
		\frac{K}{C_F} \int_\Omega e^V (u_\infty -C)^2_+ ~ dx 
		+ C \int_{\Gamma_{\text{out}}} \beta e^V ( u_\infty -C)_+ ~d\sigma 
		\leq \int_{\Gamma_{\text{in}}} \alpha (u_\infty -C)_+ ~ d\sigma,
	\end{align}
	with $K = \min \lbrace 1, \beta \rbrace$.
	As $\alpha$ is assumed to be bounded, we can reformulate the right hand side of this inequality by using (weighted) Cauchy's inequality, see \cite[p. 622]{EvansPartialDifferentialEquations}, with constant $1/2\gamma$ and using the trace inequality for $H^1$-functions, see \cite[5.5 Thm.1]{EvansPartialDifferentialEquations}, to estimate
	\begin{gather}
		\int_{\Gamma_{\text{in}}} \alpha ( u_\infty -C) ~ d\sigma
		\leq \int_{\Gamma_{\text{in}}} \alpha_{\text{max}} ( u_\infty -C) ~ d\sigma
		\leq \frac{\alpha}{2\gamma} \int_{\Gamma_{\text{in}}} \alpha_{\text{max}} ~ d\sigma
		+ \frac{\gamma}{2} \int_{\Gamma_{\text{in}}} (u_\infty -C)_+^2 ~ d \sigma 
		\notag
		\\
		\leq \frac{\alpha_{\text{max}}}{2\gamma} \int_{\Gamma_{\text{in}}} \alpha_{\text{max}} ~ d\sigma
		+ \frac{C_{\text{trace}}\gamma}{2} \int_\Omega (u-C)_+^2 ~ dx.
		\label{Inequality2}
	\end{gather}
	Combining \eqref{Inequality1} and \eqref{Inequality2} and using \eqref{TestFunctionTrivial} we gain
	\begin{align*}
		\underbrace{ \Big( \frac{K}{C_F} - \frac{C_{\text{trace}}\gamma}{2} \max \lbrace e^{-V} \rbrace \Big) }_{A}  \int_\Omega e^V ( u_\infty - C)^2_+ ~ dx
		\\
		+ \underbrace{ C \int_{\Gamma_{\text{out}}} \beta e^V (u_\infty -C)_+ ~ d\sigma
		- \frac{\alpha}{2\gamma} \int_{\Gamma_{\text{out}}} \beta u_\infty e^V ~ d\sigma}_{B}
		\leq 0.
	\end{align*}
	If we choose $\gamma$ large enough so that $A$ is positive and $C \geq \frac{\alpha}{2 \gamma}$ such that $B$ is positive, we can omit $B$ obtain that a.e. $u_\infty \leq C$. 

To show that the stationary solution is strictly positive, we first note that the argument of Lemma \ref{1AWeakSolNonNeg} also holds for the stationary solution and thus $u_\infty \in L^\infty(\Omega)$. Together with $u \in H^{1}(\Omega)$, this allows us to apply \cite[Theorem 4]{LeHalStrongPositivity} to \eqref{1AweakformulationSlotboomStationary} to conclude strict positivity.
\end{proof}

Next we show that such a stationary solution actually exists and that it is unique, closely following the proof of Proposition 4.1 in \cite{BurgerPietschmannFlowCharacteristics}.

\begin{lemma}[Existence of the stationary solution]
	\label{1AStationarySolutionStrictlyPositive}
	Let (A1)-(A5) hold, then there exists exactly one stationary solution $\rho_\infty \in H^1(\Omega)$ to \eqref{eq:rhostationary} with boundary conditions \eqref{BoundaryCondition1Ain}--\eqref{BoundaryCondition1Aisolated}.
\end{lemma}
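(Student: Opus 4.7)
The plan is to work with the Slotboom variable $u_\infty = \rho_\infty e^{-V}$, so that the stationary problem reduces to the symmetric linear elliptic equation given by the weak formulation
\begin{align*}
B[u_\infty,\phi] := \int_\Omega e^V \nabla u_\infty \cdot \nabla \phi \, dx + \int_{\Gamma_{\text{out}}} \beta e^V u_\infty \phi \, d\sigma = \int_{\Gamma_{\text{in}}} \alpha \phi \, d\sigma =: \ell(\phi)
\end{align*}
for all $\phi \in H^1(\Omega)$. Since the equation is linear and the bilinear form is symmetric, the natural tool is Lax--Milgram: once continuity and coercivity of $B$ on $H^1(\Omega)$ and boundedness of $\ell$ are established, existence and uniqueness of $u_\infty$ follow, and then $\rho_\infty = u_\infty e^V$ solves the original stationary problem. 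This is essentially the strategy of Proposition~4.1 in \cite{BurgerPietschmannFlowCharacteristics}, adapted to the linear case.

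First I would verify that $\ell$ is a bounded linear functional on $H^1(\Omega)$: by (A5) $\alpha \in L^\infty(\Gamma_{\text{in}})$, so the trace theorem \cite[5.5 Thm.~1]{EvansPartialDifferentialEquations} gives $|\ell(\phi)| \le \|\alpha\|_{L^\infty} |\Gamma_{\text{in}}|^{1/2} \|\phi\|_{L^2(\partial\Omega)} \le C \|\phi\|_{H^1(\Omega)}$. Continuity of $B$ is immediate from boundedness of $e^V$ (via (A2)) and of $\beta$ (via (A5)), again combined with the trace inequality to handle the boundary integral.

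The main point, and the step that makes the stationary problem strictly easier than the time-dependent one, is coercivity. With $K_2 = \inf e^V > 0$ one has
\begin{align*}
B[u,u] \ge K_2 \int_\Omega |\nabla u|^2 \, dx + K_2 \beta_0 \int_{\Gamma_{\text{out}}} u^2 \, d\sigma.
\end{align*}
Here the crucial point is assumption (A4): $\Gamma_{\text{out}}$ is nonempty, so Friedrich's inequality with boundary values (Theorem \ref{FriedrichsInequality}) applies with $\Gamma = \Gamma_{\text{out}}$ and yields $\|u\|_{L^2(\Omega)}^2 \le C_F\bigl(\|\nabla u\|_{L^2(\Omega)}^2 + \|u\|_{L^2(\Gamma_{\text{out}})}^2\bigr)$. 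Combining, I obtain
\begin{align*}
B[u,u] \ge \frac{K_2 \min\{1,\beta_0\}}{1+C_F} \, \|u\|_{H^1(\Omega)}^2,
\end{align*}
so that $B$ is coercive on all of $H^1(\Omega)$, not merely up to a finite-dimensional kernel as in the pure Neumann case. Lax--Milgram then produces a unique $u_\infty \in H^1(\Omega)$, and uniqueness of $\rho_\infty$ follows since the Slotboom transform is a bijection on $H^1(\Omega)$.

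The main obstacle I anticipate is mostly conceptual rather than technical: without an outflow part of the boundary one would lose coercivity (the constant function would lie in the kernel), and the mass balance identity $\int_{\Gamma_{\text{out}}} \beta u_\infty e^V \, d\sigma = \int_{\Gamma_{\text{in}}} \alpha \, d\sigma$ obtained by testing with $\phi \equiv 1$ in Lemma \ref{stationarySolutionBounded} would not be solvable for general $\alpha$. The nonemptiness of $\Gamma_{\text{out}}$ in (A4) is therefore both what makes Lax--Milgram applicable and what makes the physical problem well-posed; once this is exploited through Friedrich's inequality, the rest of the proof is essentially bookkeeping.
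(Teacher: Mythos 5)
Your proof is correct, and it reaches the conclusion by a slightly different (and in fact more streamlined) route than the paper. The paper splits the argument in two: existence is obtained from the G\aa rding inequality \eqref{eq:garding} together with the standard elliptic theory of \cite[Section 6.2]{EvansPartialDifferentialEquations} (i.e.\ implicitly the Fredholm alternative), and uniqueness is then proved separately by taking the difference $\omega = \rho_1 - \rho_2$ of two stationary solutions, passing to $\nu = \omega e^{-V}$, testing with $\nu$, and invoking Friedrichs' inequality with boundary values to conclude $\nu = 0$. You instead observe that the Slotboom bilinear form is genuinely coercive on all of $H^1(\Omega)$ --- the combination of $\|\nabla u\|_{L^2(\Omega)}^2 + \|u\|_{L^2(\Gamma_{\text{out}})}^2$ controls $\|u\|_{H^1(\Omega)}^2$ via Theorem \ref{FriedrichsInequality} precisely because $\Gamma_{\text{out}}$ is nonempty --- so Lax--Milgram delivers existence and uniqueness in one step. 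This is a legitimate sharpening: the paper itself labels the form ``non-coercive'' when deriving \eqref{eq:garding}, but your estimate shows it is coercive, and the paper's separate uniqueness computation is nothing other than your coercivity bound applied to the difference of two solutions. The ingredients (Slotboom transform, Friedrichs with boundary values on $\Gamma_{\text{out}}$, the trace theorem for the right-hand side) are identical in both arguments; what your packaging buys is that the Fredholm alternative and the stand-alone uniqueness step become unnecessary, while the paper's version has the minor advantage of reusing verbatim the G\aa rding inequality already established for the parabolic problem. Your closing remark about the mass balance identity from Lemma \ref{stationarySolutionBounded} correctly identifies why nonemptiness of $\Gamma_{\text{out}}$ is not merely technical.
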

\begin{proof}
	Using the G\aa rding inequality \eqref{eq:garding}, existence follows from the standard theory for elliptic equations, see \cite[Section 6.2.]{EvansPartialDifferentialEquations}. Now let $\rho_1$ and $\rho_2$ be two stationary solutions, then 
	$\omega = \rho_1  - \rho_2$ satisfies
	\begin{align*}
		0 &= \nabla \cdot (- \nabla \omega  + \omega \nabla V) \text{ on } \Omega
	\end{align*}
	with boundary conditions
	\begin{alignat*}{3}
		-\nabla \omega  + \omega \nabla V &= 0 &\text{ in } &\Gamma_{\text{in}} \text{ and} \\
		-\nabla \omega  + \omega \nabla V &= \beta \omega &\text{ in } &\Gamma_{\text{out}}.		
	\end{alignat*}
	Now let $\nu = \omega e^{-V}$, then $\nu$ is the weak solution of $\nabla \cdot ( e^V \nabla \nu) = 0$ in $\Omega$ with boundary conditions 
	\begin{alignat*}{3}
		e^V \nabla \nu &= 0 &\text{ in } &\Gamma_{\text{in}} \text{ and} \\
		e^V \nabla \nu &= - \beta \nu &\text{ in } &\Gamma_{\text{out}}.
	\end{alignat*}
	Using the weak formulation of this boundary value problem with test function $\nu$ implies
	\begin{align*}
		- \int_\Omega e^V \vert \nabla \nu \vert^2 ~ dx 
		- \int_{\Gamma_{\text{out}}} \beta e^V \nu^2 ~dx = 0,
	\end{align*}
	which yields $\nu = 0 = \omega$ (using Friedrichs' inequality with boundary values) and thus uniqueness of the solution.
\end{proof}

\subsection{Entropy Dissipation with Logarithmic Entropy}
The standard approach to prove exponential convergence of $\rho(x,t)$ as $t\to \infty$ would be choosing a logarithmic entropy functional and using a logarithmic-Sobolev inequality to bound the dissipation by the relative entropy. But this approach fails in our case, due to the lack of mass conservation. More precisely, a scaling argument shows that the desired logarithmic-Sobolev inequality fails to hold. Indeed, consider
%
%
\begin{align}\label{eq:relativelogentropy}
	E(\rho\vert \rho_\infty) = \int_\Omega \rho  \log\Big(\frac{\rho}{\rho_\infty}\Big) - ( \rho - \rho_\infty) ~ dx.
\end{align}
Calculating the dissipation of this functional yields
\begin{align*}
	D(\rho \vert \rho_\infty) 
	&\coloneqq - \frac{d}{dt} E(\rho\vert \rho_\infty)
	= - \int_\Omega \partial_t \rho \cdot \log\Big(\frac{\rho}{\rho_\infty}\Big) ~ dx 
	= \int_\Omega \nabla \cdot J \log\Big(\frac{\rho}{\rho_\infty}\Big) ~ dx \\
	&= 
	\underbrace{-\int_\Omega J \nabla \log\Big(\frac{\rho}{\rho_\infty}\Big) ~ dx}_{\substack{A}}
	- \int_{\Gamma_{\text{in}}} \alpha \log\Big(\frac{\rho}{\rho_\infty}\Big) ~ d\sigma + \int_{\Gamma_{\text{out}}} \beta\rho e^V \log\Big(\frac{\rho}{\rho_\infty}\Big) ~ d\sigma. 
	\\
	\intertext{We now use $J=\rho \nabla(-\log(\rho)+V)$, to write $A$ as}
	A
	&= \int_\Omega \rho \nabla\Big(\log\big(\frac{\rho}{\rho_\infty}\Big)+\log(\rho_\infty)-V\Big)\nabla \log\Big(\frac{\rho}{\rho_\infty}\Big) ~ dx \\
	&= \int_\Omega \rho \big\vert \nabla\log\big(\frac{\rho}{\rho_\infty}\big)\big\vert^2 ~ dx
	- \int_\Omega \rho \frac{J_\infty}{\rho_\infty}\nabla \log\Big( \frac{\rho}{\rho_\infty} \Big) ~ dx \\
	&= \int_\Omega \rho \big\vert \nabla\log\big(\frac{\rho}{\rho_\infty}\big)\big\vert^2 ~ dx
	- \int_\Omega J_\infty \nabla \frac{\rho}{\rho_ \infty} ~ dx \\
	&= \int_\Omega \rho \big\vert \nabla\log\big(\frac{\rho}{\rho_\infty}\big)\big\vert^2 ~ dx
	+  \int_{\Gamma_{\text{in}}} \alpha \frac{\rho}{\rho_\infty} ~ d\sigma 
	- \int_{\Gamma_{\text{out}}} \beta\rho_\infty e^V \frac{\rho}{\rho_\infty} ~ d\sigma, 
\end{align*}
where we used $u\nabla \log(u) = \nabla u$ in the penultimate line and Gauss' theorem in the last equation.
To show the entropy entropy dissipation inequality we need the following lemma: 
\begin{lemma}
	\label{Hilfslemmaintegralistpositiv1A}
	The following sum of integrals is positive:
	\begin{align*}
		B=- \int_{\Gamma_{\text{in}}} \alpha \log\Big(\frac{\rho}{\rho_\infty}\Big) ~ d\sigma + \int_{\Gamma_{\text{out}}} \beta\rho e^V \log\Big(\frac{\rho}{\rho_\infty}\Big) ~ d\sigma \\ 
		+  \int_{\Gamma_{\text{in}}} \alpha \frac{\rho}{\rho_\infty} ~ d\sigma - \int_{\Gamma_{\text{out}}} \beta\rho_\infty e^V \frac{\rho}{\rho_\infty} ~ d\sigma .
	\end{align*}
\end{lemma}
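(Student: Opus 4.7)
The strategy is to recast each boundary integrand purely in terms of the ratio $y \coloneqq \rho/\rho_\infty > 0$, apply two elementary convex inequalities that underlie the logarithmic entropy, and cancel the remaining constants via the stationary mass balance already established for $\rho_\infty$.

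Concretely, I would first group the four pieces of $B$ according to the boundary piece they sit on. On $\Gamma_{\text{in}}$, the two $\alpha$-integrals combine to $\int_{\Gamma_{\text{in}}} \alpha\,(y - \log y)\,d\sigma$. On $\Gamma_{\text{out}}$, after factoring $\rho_\infty$ out of $\rho \log(\rho/\rho_\infty) - \rho_\infty\cdot(\rho/\rho_\infty)$, the two $\beta$-integrals combine to $\int_{\Gamma_{\text{out}}} \beta\,\rho_\infty\,(y\log y - y)\,d\sigma$ (with the same nonnegative weight appearing on both $\beta$-terms of the original expression).

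Second, I would apply the two pointwise inequalities
\begin{equation*}
    y - \log y \;\ge\; 1,\qquad y\log y - y + 1 \;\ge\; 0 \qquad \text{for all } y > 0,
\end{equation*}
each an immediate consequence of the strict convexity of $t\mapsto t\log t$, with equality iff $y=1$. Summing the two contributions yields the lower bound
\begin{equation*}
    B \;\ge\; \int_{\Gamma_{\text{in}}} \alpha \, d\sigma \;-\; \int_{\Gamma_{\text{out}}} \beta\,\rho_\infty\,d\sigma.
\end{equation*}

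The last step is to invoke the stationary mass balance: testing the weak stationary problem \eqref{1AweakformulationSlotboomStationary} with the constant function $\phi\equiv 1$ gives exactly \eqref{TestFunctionTrivial}, which, using $u_\infty e^V = \rho_\infty$, reads $\int_{\Gamma_{\text{in}}} \alpha\,d\sigma = \int_{\Gamma_{\text{out}}} \beta\,\rho_\infty\,d\sigma$. Substituting this identity, the lower bound on $B$ vanishes and $B \ge 0$ follows. I do not expect any serious obstacle here; the only point worth emphasising is that the $+1$ picked up from $y - \log y \ge 1$ on $\Gamma_{\text{in}}$ is \emph{exactly} compensated by the $-1$ from $y\log y - y \ge -1$ on $\Gamma_{\text{out}}$ once the correct weights $\alpha$ and $\beta\rho_\infty$ are attached — and this exact compensation is precisely the content of the stationary flux balance, so the argument is tight.
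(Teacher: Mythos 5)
Your proof is correct and is essentially the paper's own argument: both rest on the two pointwise convexity inequalities $y-\log y\ge 1$ and $y\log y-y+1\ge 0$ together with the stationary flux balance $\int_{\Gamma_{\text{in}}}\alpha\,d\sigma=\int_{\Gamma_{\text{out}}}\beta\rho_\infty e^{V}\,d\sigma$ obtained from testing the stationary problem with $\phi\equiv 1$ (equivalently, Gauss' theorem applied to $J_\infty$). The only difference is cosmetic — the paper adds this zero to $B$ before invoking the convexity inequalities, whereas you apply the inequalities first and cancel the leftover constants afterwards.
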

\begin{proof}
	The function $F(t)= t \log (t)$ is convex for $t>0$, so $F(t)-F(s)-F'(s)(t-s) \geq 0$.
	Thus we have $t \log \frac{t}{s} - t +s \geq 0$, which we denote by ($*$) for the future.
	We want to add the following sum onto $B$ which is zero because of Gauss's theorem:
	\begin{align*}
		 -\int_{\Gamma_{\text{in}}} \alpha ~ d\sigma + \int_{\Gamma_{\text{out}}} \beta e^V \rho_\infty ~ d\sigma	
		= - \int_{\partial \Omega} J^{\infty} \cdot n ~ dx 
		= - \int_{\Omega} \nabla J^{\infty} ~ dx  = 0.
	\end{align*}
	So we gain by addition of zero, ($*$) and $\log(x)-x+1 \leq 0$ for $x\geq0$ the following:
	\begin{align*}
		B 
		&= - \int_{\Gamma_{\text{in}}} \alpha \Big( \log \frac{\rho}{\rho_\infty} -\frac{\rho}{\rho_\infty} \Big) ~ d\sigma
		- \int_{\Gamma_{\text{out}}} \beta e^V \Big(\rho -\rho \log \frac{\rho}{\rho_\infty}\Big) ~ d\sigma	\\
		&\qquad - \int_{\Gamma_{\text{in}}} \alpha dx + \int_{\Gamma_{\text{out}}} \beta e^V \rho_\infty ~ d\sigma \\
		&= - \int_{\Gamma_{\text{in}}} \alpha \Big( \log \frac{\rho}{\rho_\infty} -\frac{\rho}{\rho_\infty} +1\Big) ~ d\sigma
		+ \int_{\Gamma_{\text{out}}} \beta e^V \Big(\rho \log \frac{\rho}{\rho_\infty} -\rho +\rho_\infty\Big) ~ d\sigma \geq 0.		
	\end{align*}
\end{proof}

Applying Lemma \ref{Hilfslemmaintegralistpositiv1A} gives us
\begin{align*}
	D(\rho \vert \rho_\infty) 
	&\geq \int_\Omega \rho \big\vert\nabla\log\big(\frac{\rho}{\rho_\infty}\big)\big\vert^2 dx 
	= \int_\Omega \rho ~ \frac{\rho_\infty}{\rho} ~ \Big\vert \frac{\nabla \frac{\rho}{\rho_\infty}}{ \sqrt{\frac{\rho}{\rho_\infty}}} \Big\vert^2 dx
	= 4 \int_\Omega \rho_\infty \vert \nabla \big(\sqrt{ \frac{\rho}{\rho_\infty}}\big)\vert^2 dx.
\end{align*}
If we define $\phi = \sqrt{\frac{\rho}{\rho_\infty}}$, we would need the following inequality, weighted by the strictly positive function $\rho_\infty$,
\begin{align*}
	\int_\Omega \vert \nabla \phi\vert^2dx 
	\leq C \Big(\int_\Omega \phi^2 \log(\phi^2)- \phi^2 + 1 dx - \int_{\Gamma_{\text{out}}} \phi^2 \log(\phi^2) - \phi^2 +1 ~ d\sigma\Big)
\end{align*}
to gain the desired result.
Surprisingly this inequality is badly scaled.
To see this, assume that there exists a function $\phi$ having trace zero on $\Gamma_{\text{out}}$ that satisfies the inequality. Replacing $\phi$ by $K\phi$ with constant $K \ge 0$
gives
\begin{align*}
	\int_\Omega \vert \nabla \phi\vert^2 ~dx &\leq C 
	\Big( \int_\Omega \phi \log(K\phi)^2 - \phi^2 + \frac{1}{K^2} ~dx
	- \int_{\Gamma_{\text{out}}} \phi^2 \log(K\phi)^2 - \phi^2 + \frac{1}{K^2} ~d\sigma \Big).
\intertext{As $\phi$ has zero boundary values at $\Gamma_{\text{out}}$, we obtain}
	\int_\Omega \vert \nabla \phi\vert^2 ~dx &\leq C 
	\Big( \int_\Omega \phi \log(K\phi)^2 - \phi^2 + \frac{1}{K^2} ~dx
	- \int_{\Gamma_{\text{out}}} \frac{1}{K^2} ~d\sigma \Big)
\end{align*}
and if we chose $K$ small enough, the inequality becomes wrong.

\subsection{Entropy dissipation with Quadratic Entropy}

\label{SubsectionQuadraticEntropy}
Although the logarithmic entropy is physically more natural, the preceding discussion showed that it is not suitable in our setting. However, as the problem is linear, the quadratic relative entropy defined as 
\begin{align}
	\label{1ARelativeEntropy}
	E(\rho\vert \rho_\infty) 
	= \frac{1}{2} \int_\Omega \frac{(\rho-\rho_\infty)^2}{\rho_\infty} ~ dx
\end{align}
yields the desired exponential decay towards equilibrium. Its dissipation is given as
\begin{align} \label{1Adissipation}
	\begin{split}
	D(\rho\vert \rho_\infty) 
	&= - \int_\Omega \partial_t \rho \, \frac{\rho-\rho_\infty}{\rho_\infty} ~ dx 	
	= - \int_\Omega (- \nabla \rho + \rho \nabla V)\cdot \nabla \Big(\frac{\rho-\rho_\infty}{\rho_\infty}\Big) ~ dx 
	\\
	&\qquad -  \int_{\Gamma_{\text{in}}} \alpha ~ \frac{\rho-\rho_\infty}{\rho_\infty} ~ d\sigma
	+ \int_{\Gamma_{\text{out}}} \beta\rho \, \frac{\rho-\rho_\infty}{\rho_\infty} ~ d\sigma. 
	\end{split}
\end{align}
To proceed we test the equation $-\nabla \cdot J_\infty =0$ with $ \phi = \frac{\rho-\rho_\infty}{\rho_\infty}$ which yields
\begin{align*}
	0
	= \int_\Omega (- \nabla \rho_\infty + \rho_\infty \nabla V)\cdot \nabla \Big(\frac{\rho-\rho_\infty}{\rho_\infty}\Big) ~ dx 
	+ \int_{\Gamma_{\text{in}}} \alpha ~ \frac{\rho-\rho_\infty}{\rho_\infty} ~ d\sigma
	- \int_{\Gamma_{\text{out}}} \beta \rho_\infty  \frac{\rho-\rho_\infty}{\rho_\infty} ~ d\sigma.
\end{align*}
Adding the last two equations then results in
\begin{align}
	D(\rho\vert \rho_\infty) 
	&= \int_\Omega \Big( \nabla (\rho-\rho_\infty) 
	+ (\rho_\infty - \rho ) \nabla V \Big) \cdot \nabla \Big(\frac{\rho-\rho_\infty}{\rho_\infty}\Big) ~ dx
	+ \int_{\Gamma_{\text{out}}} \beta \frac{(\rho-\rho_\infty)^2}{\rho_\infty} ~ d\sigma.
	\notag
	\intertext{Using the definition of $\phi$ then yields}	
	&= \int_\Omega \rho_\infty \vert \nabla \phi \vert^2 
	- ( - \nabla \rho_\infty 
	+ \rho_\infty \nabla V) \cdot \phi\nabla \phi ~ dx
	+  \int_{\Gamma_{\text{out}}} \beta \rho_\infty \phi^2 ~ d\sigma 
	\notag
	\\
	&= \int_\Omega \rho_\infty \vert \nabla \phi \vert^2 ~ dx
	+ \frac{1}{2} \int_\Omega \phi^2 ~ \nabla \cdot J_\infty ~ dx 
	+ \int_{\Gamma_{\text{in}}}  \frac{\alpha}{2} \phi^2 ~ d \sigma
	+ \int_{\Gamma_{\text{out}}} \frac{\beta}{2}  \rho_\infty \phi^2 ~ d \sigma	
	\label{1Adissipationcomparision}
	\\
	&\geq \int_\Omega \rho_\infty \vert \nabla \phi \vert^2 ~ dx
	+ \frac{\beta_0}{2} \int_{\Gamma_{\text{out}}} \rho_\infty \phi^2 ~ d \sigma,
	\notag
\end{align}
where the last inequality holds as $\nabla \cdot J_\infty=0, ~ \beta \geq \beta_0$ and the influx term is positive. With $K_1=\min\lbrace \frac{\beta_0}{2}, 1\rbrace$ and $C_F$ being the Friedrich's constant, we gain
\begin{align}
	D(\rho\vert \rho_\infty)\geq \frac{1}{2} C_F K_1 \int_\Omega \rho_\infty \vert \phi \vert^2 ~ dx
	= C_F K_1 E(\rho \vert \rho_\infty),
	\label{1Asigndissipation}
\end{align}
as $\rho_\infty$ is strictly positive as proven in lemma \ref{1AStationarySolutionStrictlyPositive}.
Combining this inequality with Gronwall's lemma, we obtain 
\begin{align*}
	E(\rho \vert \rho_\infty) \leq E(\rho_0 \vert \rho_\infty)e^{- C_F K_1 t}
\end{align*}
and due to the Cauchy-Schwarz inequality the desired exponential decay in $L^1(\Omega)$ via
\begin{align*}
	\Vert \rho - \rho_\infty \Vert^2_{L^1(\Omega)} 
	= \Big( \int_\Omega \vert \frac{\rho} {\rho_\infty} - 1 \vert ~ \rho_\infty ~ dx \Big) \leq 2 \max \lbrace \rho_\infty \rbrace E(\rho\vert \rho_\infty).
\end{align*}

\subsection{Numerical Solutions}\label{sec:Numerik1A}
Finally we want to solve the one-dimensional problem numerically.
After an appropriate scaling we assume $\Omega = [0,1]$ with $\partial\Omega=\lbrace 0,1 \rbrace$,
so the one-dimensional version of \eqref{Fokker-Planck1A} becomes
\begin{align}
	\label{1A1Dversion}
	0 = \partial_t \rho 
	+ \partial_x \big( - \partial_x \rho + \rho \partial_x V \big) \text{ on } (0,1) \times (0,T),
\end{align}
with boundary condition 
\begin{alignat*}{3}
	J &= \alpha			&\text{ at } &x = 0 \text{ and} \\
	J &= \beta \rho(1)	&\text{ at } &x = 1.
\end{alignat*}
Note that in this setting we can give an explicit characterization of the stationary solution.
As the flux rates $\alpha$ and $\beta$ are constants we have $J = -\partial_x \rho_{\infty} + \rho_{\infty} \partial_x V = \alpha$ because of the boundary condition at $x = 0$.
Solving this ordinary differential equation, we obtain the strictly positive stationary solution
\begin{align}
	\rho_{\infty}(x) 
	&= \Big( -\int_0^x \alpha e^{-V(x)} dx + C \Big) e^{V(x)} \geq \frac{\alpha}{\beta}e^{V(1)} > 0,
		\label{1Astationarysolution1D}
\intertext{with the constant}
	C 
	&= \alpha \Big( \frac{e^{-V(1)}}{\beta} + \int_0^1 e^{-V(x)}dx \Big).
	\notag
\end{align}
For simplicity, we chose $V(x)=x$ in the following, i.e. the simplest case in which mass is transported from the left entrance to the right exit. We introduce an uniformly spaced grid with $n$ grid points $x_0,..,x_{n-1}$, with $x_0=0$ and $x_{n-1}=1$ and discretize \eqref{1A1Dversion} using a fully explicit finite difference scheme. The boundary conditions are implemented by introducing two fictitious nodes $x_{-1}$ and $x_{n}$ outside of the physical domain.
The algorithm is implemented in MATLAB, choosing the number of grid points $n = 200$ and $dt = 5 \cdot 10^{-6}$.
%
%

\subsubsection{Time Evolution of the Density}
In Figure \ref{1AResultNumericalSolution} the time evolution of the particle concentration $\rho$ solving \eqref{Fokker-Planck1A} is compared to the stationary solution $\rho_\infty$ for $\alpha =1, \beta=0.9$ and initial concentration $\rho_0(x) = -0.1x +1.2$. 
For our choice $V(x)=x$ the stationary solution is of the form
\begin{align}
	\rho_{\infty}(x) = \alpha  + \Big( \frac{1}{\beta e}- \frac{1}{e}\Big)\alpha e^{x}.
	\label{1AstationarysolutionVx}
\end{align}
In Figure \ref{1AResultNumericalSolution} (a) the direct comparison between the initial function $\rho_0(x) = -0.1x +1.2$ and the stationary solution is shown.
We chose this initial concentration as its shape is very different to the stationary solution, i.e. the gradient has the opposite sign. 
In Figure \ref{1AResultNumericalSolution} (b) the influence of the boundary conditions is strongly visible. 
At $x=0$ particles have entered the domain and at $x=1$ particles have left the domain.
This effect is combined with the drift which leads to a maximum in the left part of the domain and to a minimum at the right.
In Figure \ref{1AResultNumericalSolution} (c) one sees that after some time the shape of the solution is similar to the one of the stationary solution and only the low frequency parts need longer to converge.
Lastly Figure \ref{1AResultNumericalSolution} (d) shows this long time behavior and not surprisingly there is no difference visible between $\rho$ and $\rho_\infty$ anymore.

\begin{figure}[h]
	\begin{center}
		\begin{tabular}{cc}
\includegraphics[width=0.40\textwidth]{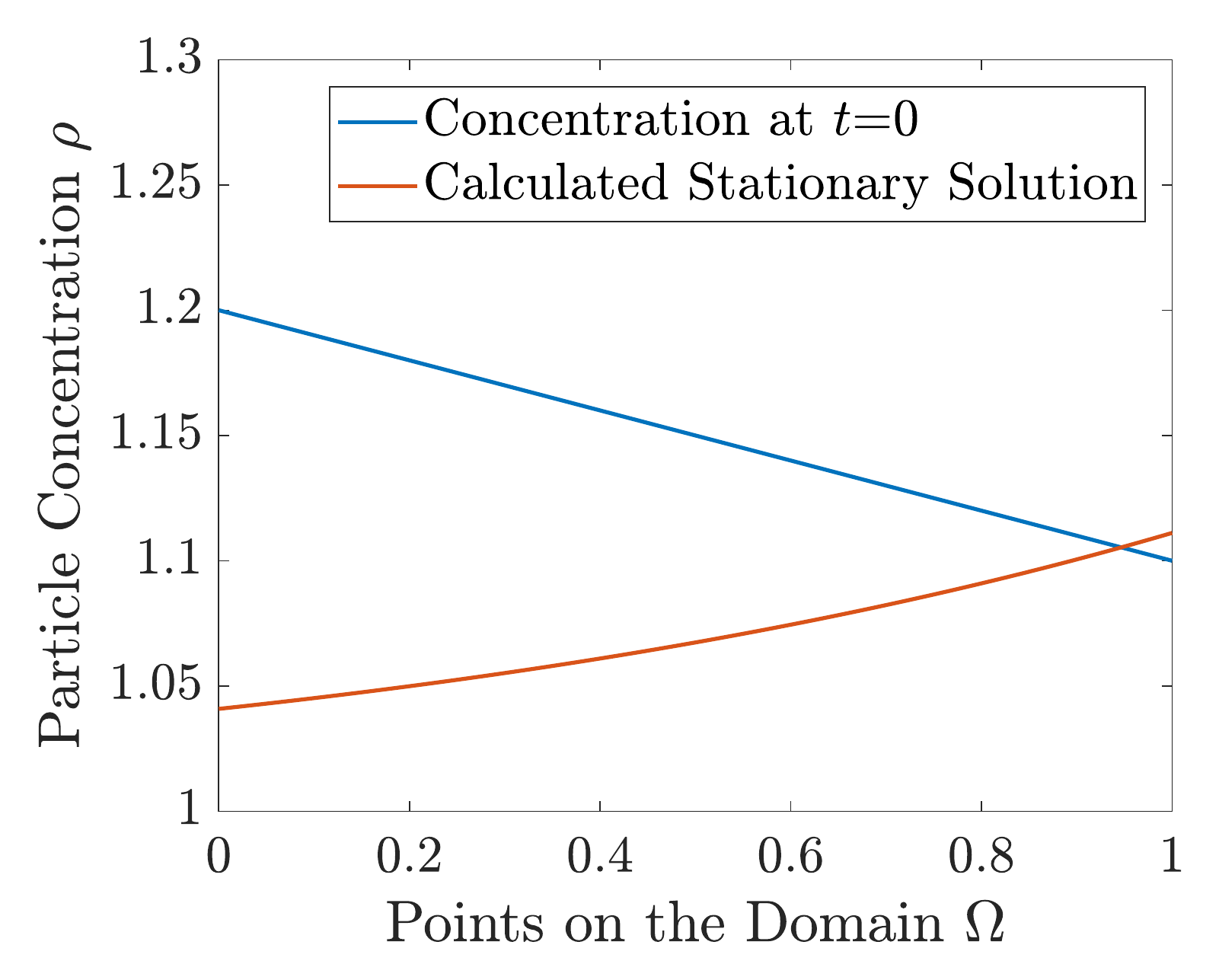} &
\includegraphics[width=0.40\textwidth]{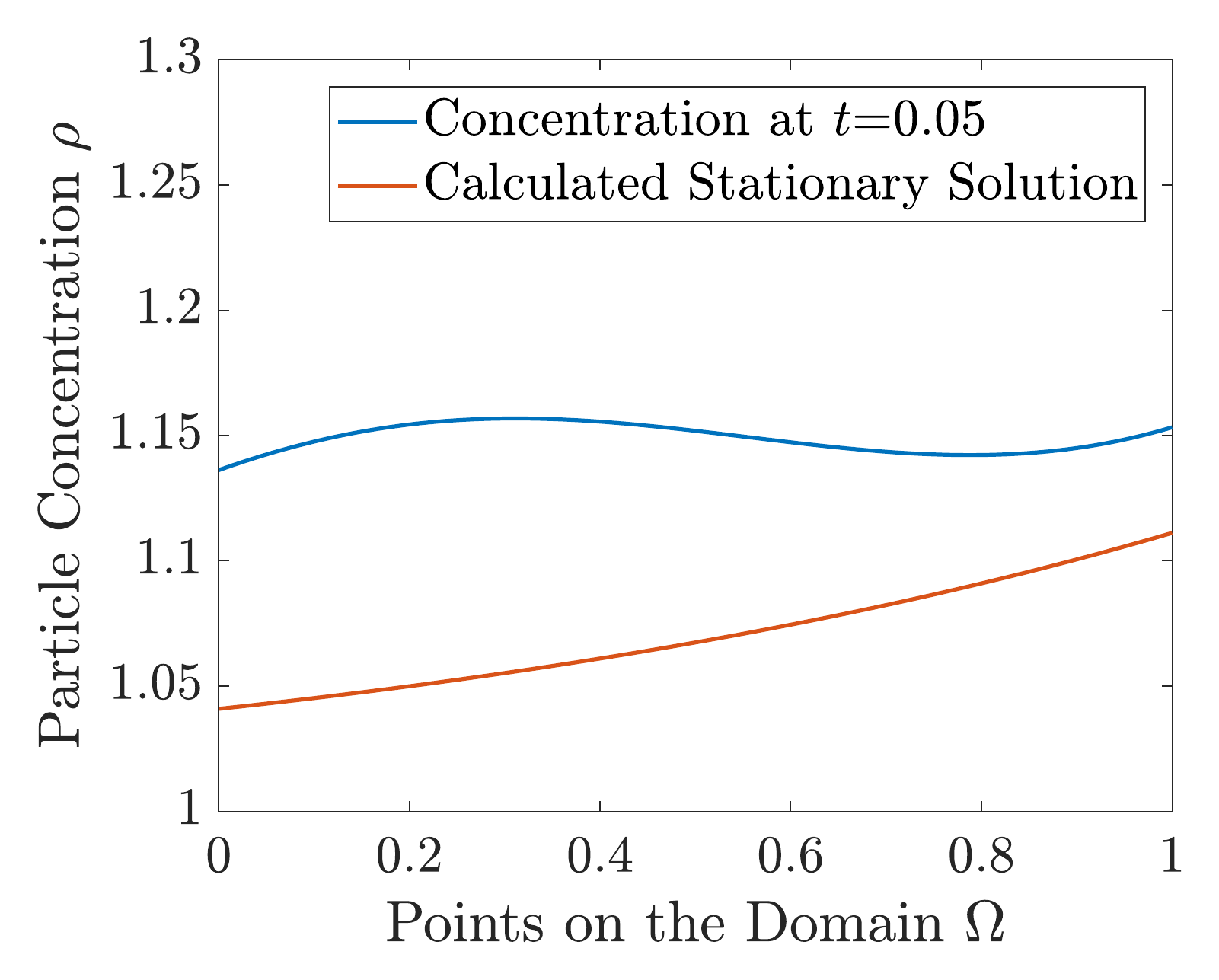} \\
		(a) & (b) \\
\includegraphics[width=0.40\textwidth]{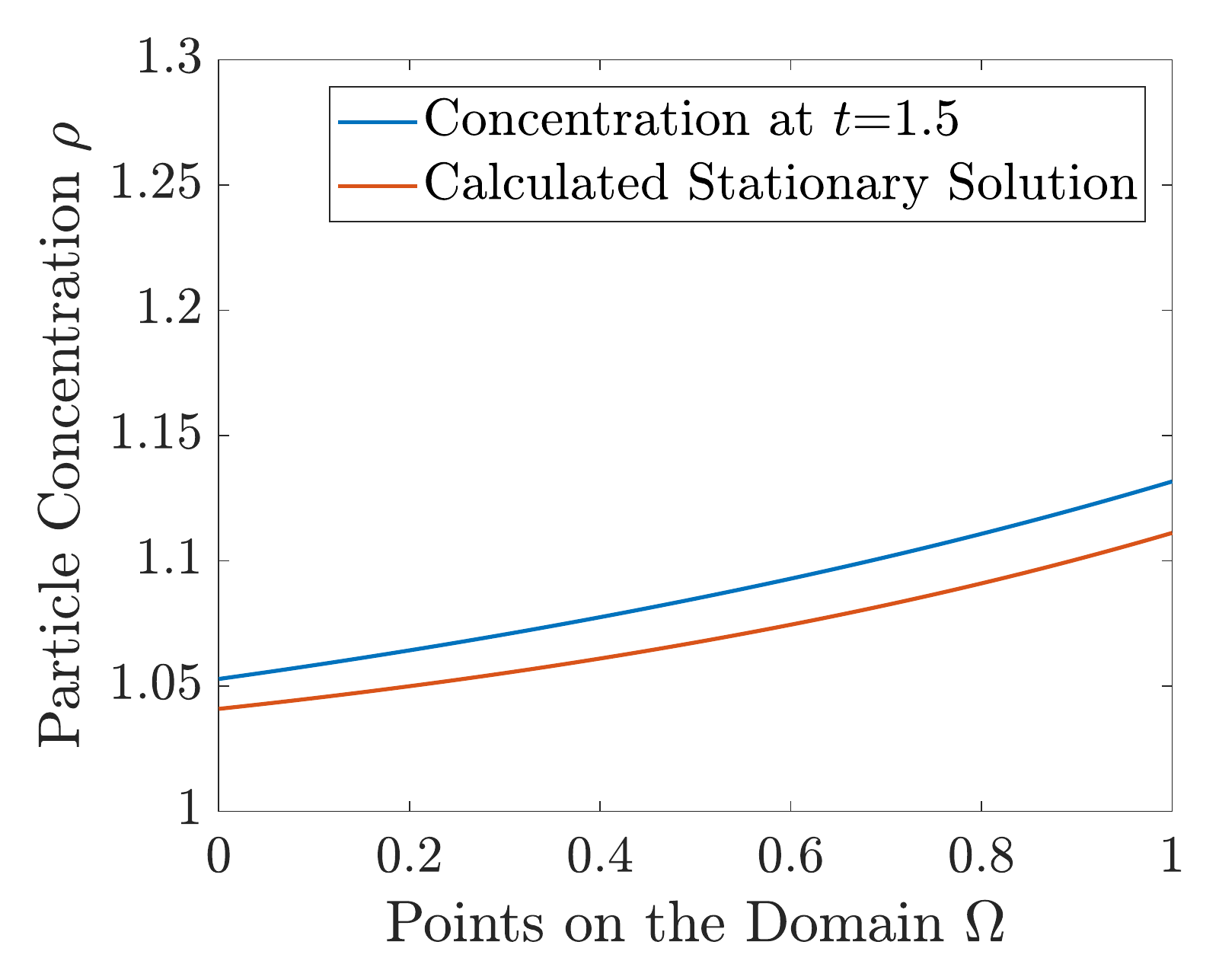} &
\includegraphics[width=0.40\textwidth]{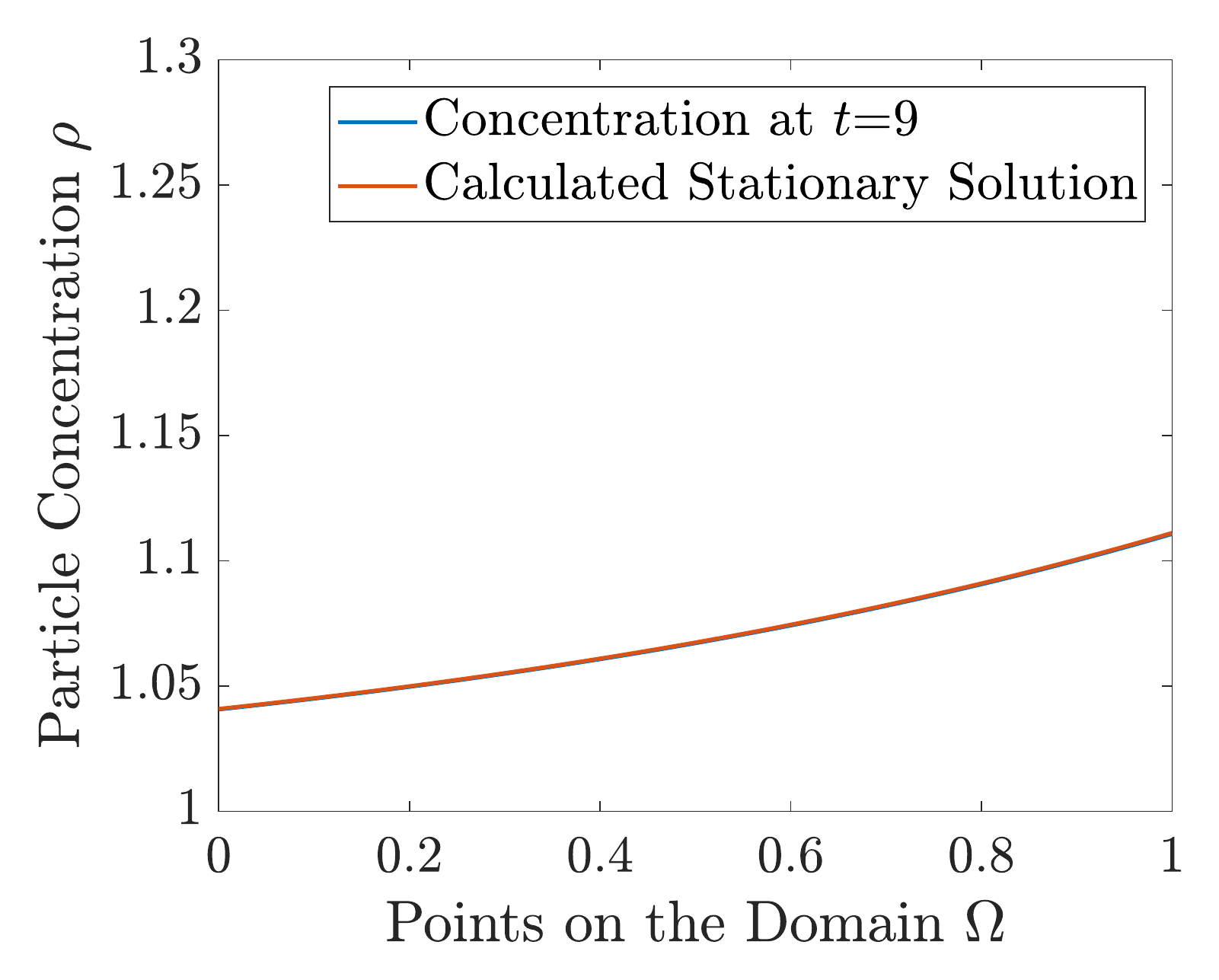} \\
		(c) & (d)
		\end{tabular}
	\end{center}
	\caption{\textbf{Evolution over Time of the Particle Concentration:} The time evolution of $\rho$ solving \eqref{Fokker-Planck1A} in comparison to the calculated stationary solution \eqref{1AstationarysolutionVx} for $\alpha =1, \beta=0.9$ and initial particle concentration $\rho(x) = -0.1x +1.2$.
	(a) The initial concentration at $t=0$, (b) strong influence of the boundary terms at $t=0.05$, (c) strong influence of the drift term and the diffusion at $t=1.5$, (d) equilibrium state at $t=9$.}
	\label{1AResultNumericalSolution}
\end{figure}

\subsubsection{Convergence Rates for the Relative Entropy}
Next we compare the numerical rate of convergence to the analytical results of section \ref{SubsectionQuadraticEntropy}. To this end we chose $\alpha=\beta=1$ which yields $\rho_\infty = 1$. Starting again with initial concentration $\rho_0(x) = -0.1x +1.2$, we observe exponential convergence with rate $m_n=-2.33$ (corresponding to the case $\gamma = 1$ in Figure \ref{1AEntropyPlot}).\\
Now we examine the calculations of section \ref{SubsectionQuadraticEntropy}.
Except for the application of Friedrich's inequality, all other manipulations are equalities. Thus, comparing \eqref{1Adissipationcomparision} and \eqref{1Asigndissipation}, we see that the analytic rate of convergence $m_a$ is determined by the constant in Friedrich's inequality, i.e.
\begin{align}
	\frac{1}{2} m_a =  \inf_{\phi} \frac{\int_0^1 (\phi')^2 ~dx + \frac{1}{2}\phi(0)^2 + \frac{1}{2}\phi(1)^2}{\int_0^1 \phi^2 ~ dx}
	=: \lambda.
	\label{1AQuotientToCalculateF-Constant}
\end{align}
To solve this minimization problem we define the functional
\begin{gather*}
	V(\phi) 
	= 
	\lambda \int_0^1 \phi^2 ~ dx  
	- \int_0^1 (\phi')^2 ~ dx 
	- \frac{1}{2}\phi(0)^2 
	- \frac{1}{2} \phi(1)^2, \ \ \text{ where}
	\\
	D_\psi V(\phi) =  2 \int_0^1 \Big( \phi \lambda + \phi'' \Big) \psi~ dx
	- 2\psi(1) \Big( \frac{1}{2} \phi(1) + \phi'(1) \Big) 
	- 2\psi(0) \Big(\frac{1}{2} \phi(0) - \phi'(0)  \Big) 
\end{gather*}
is the functional G\^{a}teaux-derivative in direction $\psi$ after integration by parts.
It is zero if the three conditions 
$$\phi'' = - \lambda \phi, ~ \frac{1}{2}\phi(1) + \phi'(1) = 0\text{ and }\frac{1}{2}\phi(0) - \phi'(0) = 0$$
hold. The function $\phi(x) = a \sin(kx) + b \cos(kx)$ satisfies the first condition if $\lambda = k^2$.
The third condition gives $b = 2 a k$ and for simplicity we chose $a =1$.
Thus the second condition translates to 
\begin{align*}
	2 k \cos (k) + \big(0.5- 2k^2\big)\sin(k) = 0,
\end{align*}
where a numerical calculation shows that the smallest and positive $k$, which solves this equation is $k \approx 0.9602$.
Inserting this in the first condition gives $m_a = 2\lambda = \inf 2 k^2 \approx 1.8439$.
So analytically the relative entropy can be bounded above by $\tilde{f}(x)=E(\rho_0\vert \rho_\infty) e^{-1.8439t}$.

Surprisingly this value differs significantly from the numerically calculated slope $m_n = -2.33$. This can be explained by the fact that the operator
\begin{align}
	A[\rho, \psi] 
	= \int_0^1 \rho'\psi' + (\rho ~ V')'\psi ~ dx
	+ \beta \rho(1)\psi(1)
	\label{1AOperatorAnalyticalCalculation}
\end{align}
is symmetric except for the drift term which is skew-symmetric. For any symmetric operator $A$ the spectral gap $\lambda$ determines the slowest possible convergence rate as
\begin{align*}
	u'=-Au + f \quad \Leftrightarrow \quad u(t) \leq \Big(u_0 + \int_0^1 f ~ dx \Big) e^{-\lambda t},
\end{align*}
by Gronwall's lemma. The skew-symmetric part can however mix the eigenvalues which can result in a faster rate of convercence. To examine this phenomena in more detail, we consider only the the symmetric part of the operator \eqref{1AOperatorAnalyticalCalculation}. Its eigenvalue is determined by 
\begin{gather*}
	\int_0^1 \rho'\psi' dx + \beta \rho(1)\psi(1) = \lambda (\rho,\psi) \\
	\Rightarrow \qquad -\int_0^1 (\rho'' + \lambda \rho)\psi ~ dx
	+ \psi(1)\Big(\beta\rho(1)+\rho'(1)\Big) - \rho'(0)\psi(0) = 0.
\end{gather*}
Elementary calculations yield that the function $u(x) = \cos(kx)$ is an eigenfunction with smallest eigenvalue $\lambda = k^2= 0.8603^2$ where $k$ is the solution of $-k^2 \cos(kx) + \lambda \cos(kx) =0$. Thus the convergence rate of the symmetric problem is $m_s= 2\lambda= 1.4802$, as calculating the infimum in \eqref{1AQuotientToCalculateF-Constant} is equivalent to calculating the eigenvalue of the corresponding operator. Indeed, this is confirmed by our numerical calculations in the case $V=0$.

The entropy dissipation calculation in section \ref{SubsectionQuadraticEntropy} on the other hand is insensitive to the skew-symmetric part of the operator because we used integration by parts to gain \eqref{1Adissipationcomparision} and to get rid of the potential-term. This explains the deviation between $m_n$ and $m_a$ as soon as $V\neq 0$.

Finally, we analyze numerically how the strength of the potential term influences the convergence rate. We consider
\begin{align*}
	0 = \partial_t \rho + \nabla \cdot (- \nabla \rho + \gamma \rho \nabla V)
\end{align*}
for different values of $\gamma \le 0$ and the same boundary conditions as in the initial situation, i.e. \eqref{BoundaryCondition1Ain} - \eqref{BoundaryCondition1Aisolated}.
This modification of the PDE effects the shape of the relative entropy and the (non exponential) relation between the scaling factor and the rate of convergence is shown in Figure \ref{1AEntropyPlot} (c).

\begin{remark}
	Clearly $\alpha$ has no influence on the convergence rate as it is not part of the operator \eqref{1AOperatorAnalyticalCalculation}.
	It just influences the constant $E(\rho_0\vert\rho_\infty)$ of the relative entropy as $\alpha$ influences $\rho_\infty$. The outflux term $\beta$ influences both the convergence rate and the constant.
\end{remark}

\begin{remark}
    Note that depending on the choice of initial datum, the mass evolution due to the boundary conditions may dominate over the exponential convergence for short times. Clearly, asymptotically, one observes the exponential rate shown in theorem \ref{1AEexpDecay}. This also motivates the proceeding subsection in which we study the mass evolution in more detail.
\end{remark}

\begin{figure}[t]
	\begin{center}
		\begin{tabular}{cc}
		\includegraphics[width=0.47\textwidth]{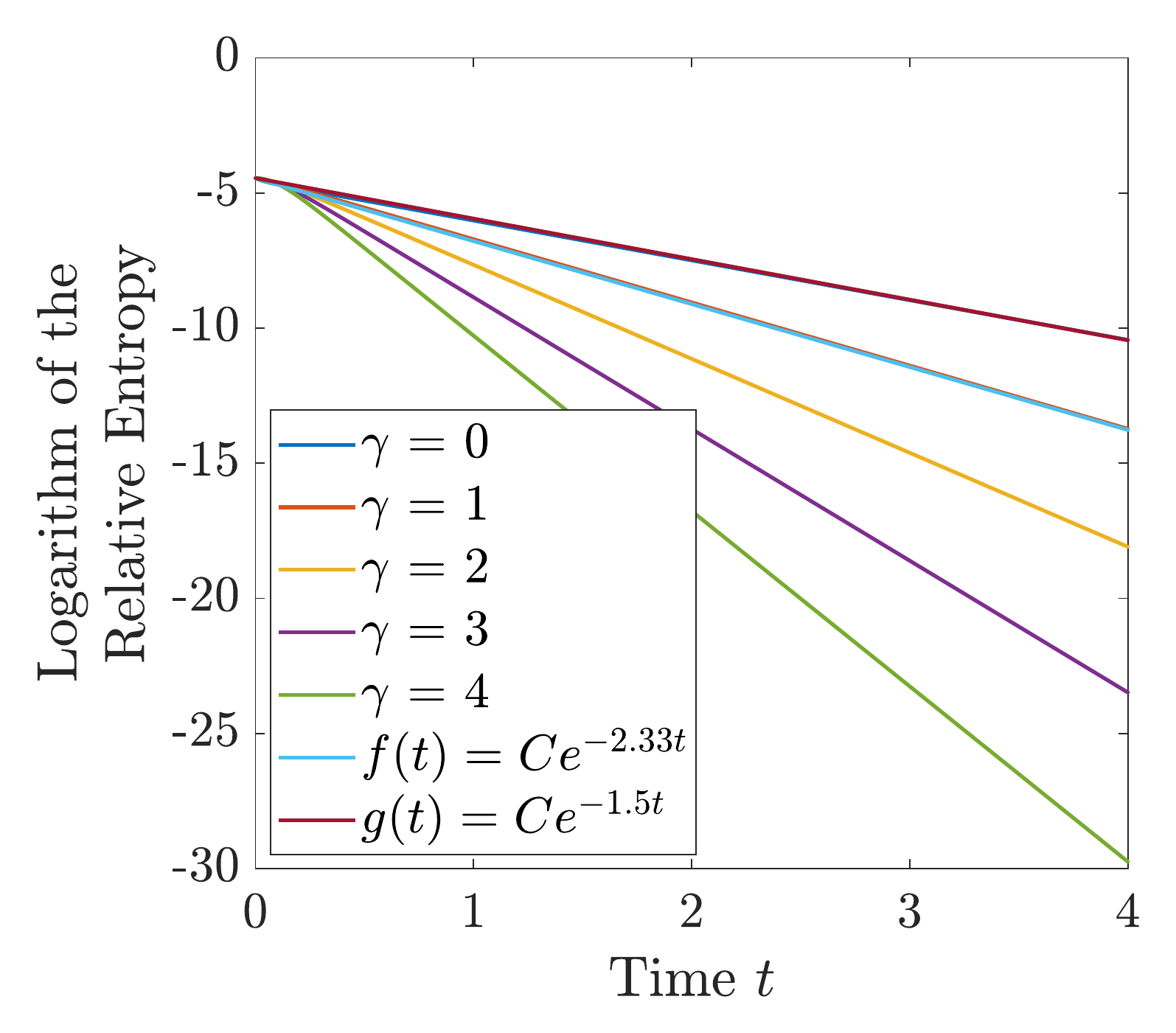} &
		\includegraphics[width=0.47\textwidth]{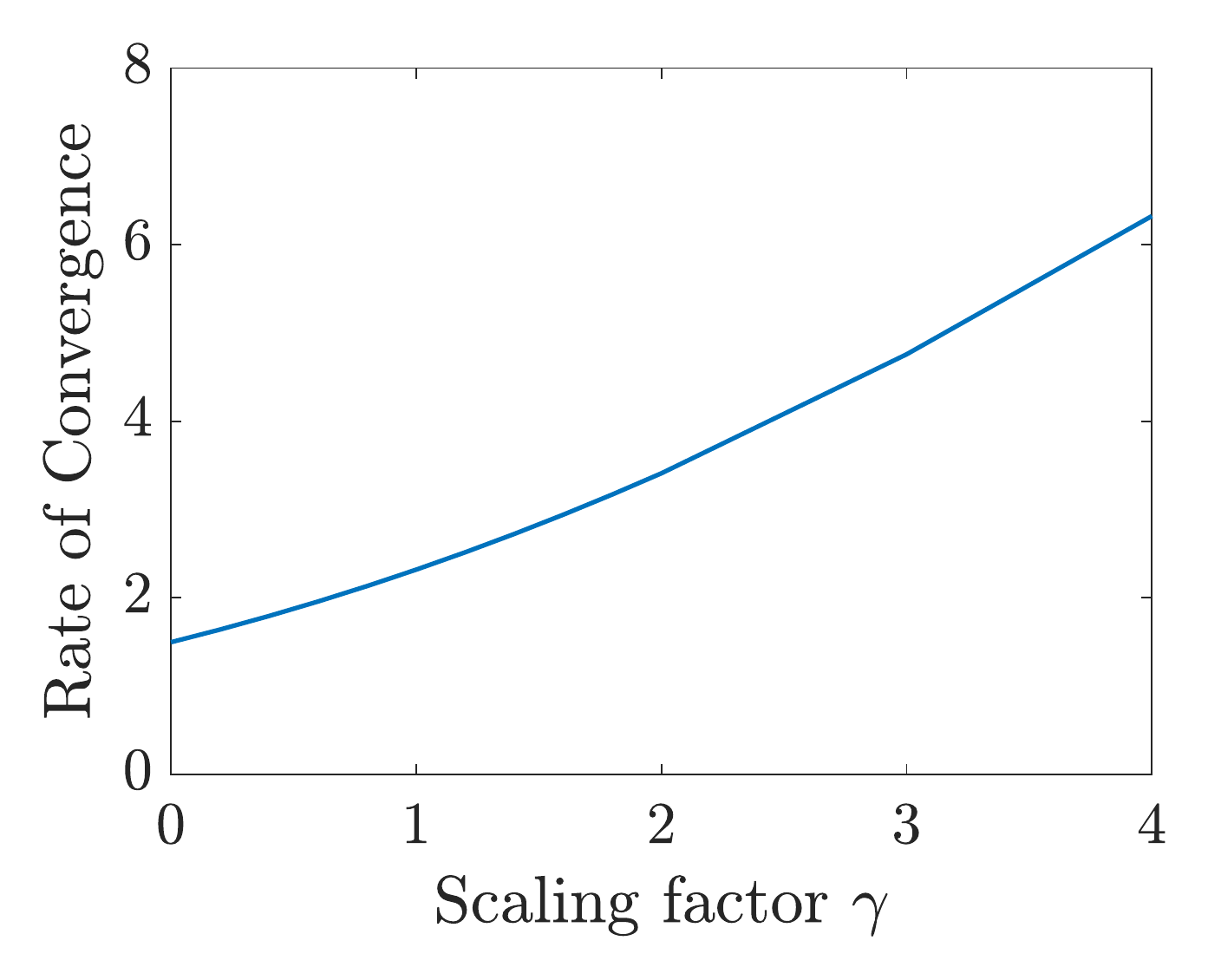}
		\\
		(a) & (b)
		\end{tabular}
	\end{center}
	\caption{\textbf{Relative Entropy:} The relative entropy \eqref{1ARelativeEntropy} for the one dimensional version of \eqref{Fokker-Planck1A} for the initial particle concentration $\rho(x) = -0.1x +1.2$. (a) Natural logarithm of the relative entropy for $\alpha =1, ~ \beta=1$ and variable scaling factor $\gamma$ of the potential term, (b) relation between the scaling factor $\gamma$ and the corresponding slope of the logarithm of the relative entropy.}
	\label{1AEntropyPlot}
\end{figure}

\subsubsection{Evolution of the Total Mass}

Most other works consider the case of an unbounded domain with confining potential or no-flux boundary conditions which yields a preservation of the total mass. This is not true in our case where we have
\begin{align*}
	\frac{d}{dt} \int_\Omega \rho \, ~ dx
	= \int_{\Gamma_{\text{in}}} \alpha ~ d\sigma 
	- \int_{\Gamma_{\text{out}}} \beta \rho \, ~ d\sigma
\end{align*}
as vesicles can enter or leave the domain at $\Gamma_{\text{in}}$ or $\Gamma_{\text{out}}$.
We want to examine this evolution numerically using two different initial conditions.
As $\rho$ converges exponentially to some equilibrium state, its mass also converges exponentially to the mass of the stationary solution. Yet in contrast to the relative entropy, the evolution of the mass is not monotone in time. To shed light on this phenomena we are now looking for initial functions which enforce non monotone mass evolution.
In Figure \ref{1AExampleforNonMonotoneMassEvolution} (a) the initial function is
\begin{align}
\begin{split}\label{eq:rho0mass1}
\rho_0(x) = 
	\begin{cases}
		1.9 &\text{if }  x < 0.5,\\
		1.9 ~ \big(0.95\cos(4 \pi x)+0.95 \big) &\text{if } 0.5 \leq x \leq 0.75,\\
		0 &\text{if }  x > 7.5.
	\end{cases}
	\end{split}
\end{align}
The mass of this initial function is about 1.1863 which is more than the mass of the equilibrium state 1.0703, so in the long run the mass evolution should be a decreasing function. 
But at the beginning the mass of $\rho$ is increasing in time. 
This can be explained by the fact that particles are pumped into the domain with rate $\alpha$ at $x=0$ whereas no particles can leave the domain as they are no particles at the exit of the domain at $x=1$.
Thus the mass increases until the drift term has transported (a substantial number of) particles to $x=1$.
In Figure \ref{1AExampleforNonMonotoneMassEvolution} (b) the initial function is
\begin{align}
 \begin{split}\label{eq:rho0mass2}
\rho_0(x) = 
	\begin{cases}
		0 &\text{if }  x < 0.9,\\
		3000(x-0.9)^2 &\text{if }  x \geq 0.9,
	\end{cases}
\end{split}
\end{align}
which yields the opposite behavior. 
At first the mass of the particles concentration is decreasing and then it increases.
This is due to the fact that the particle concentration at $x=1$ is 30 which is very much compared to the rest of the domain.
As the outflux is proportional to the concentration at $x=1$ there are more particles leaving the domain than entering it.
The mass of the initial function is 1.0711 and the mass of the equilibrium state is 1.0696.
Note that one can see that the maximum of the mass of the initial function and the mass of the stationary solution is not an upper bound for the mass of $\rho$.

\begin{figure}[H]
	\begin{center}
		\begin{tabular}{cc}
\includegraphics[width=0.40\textwidth]{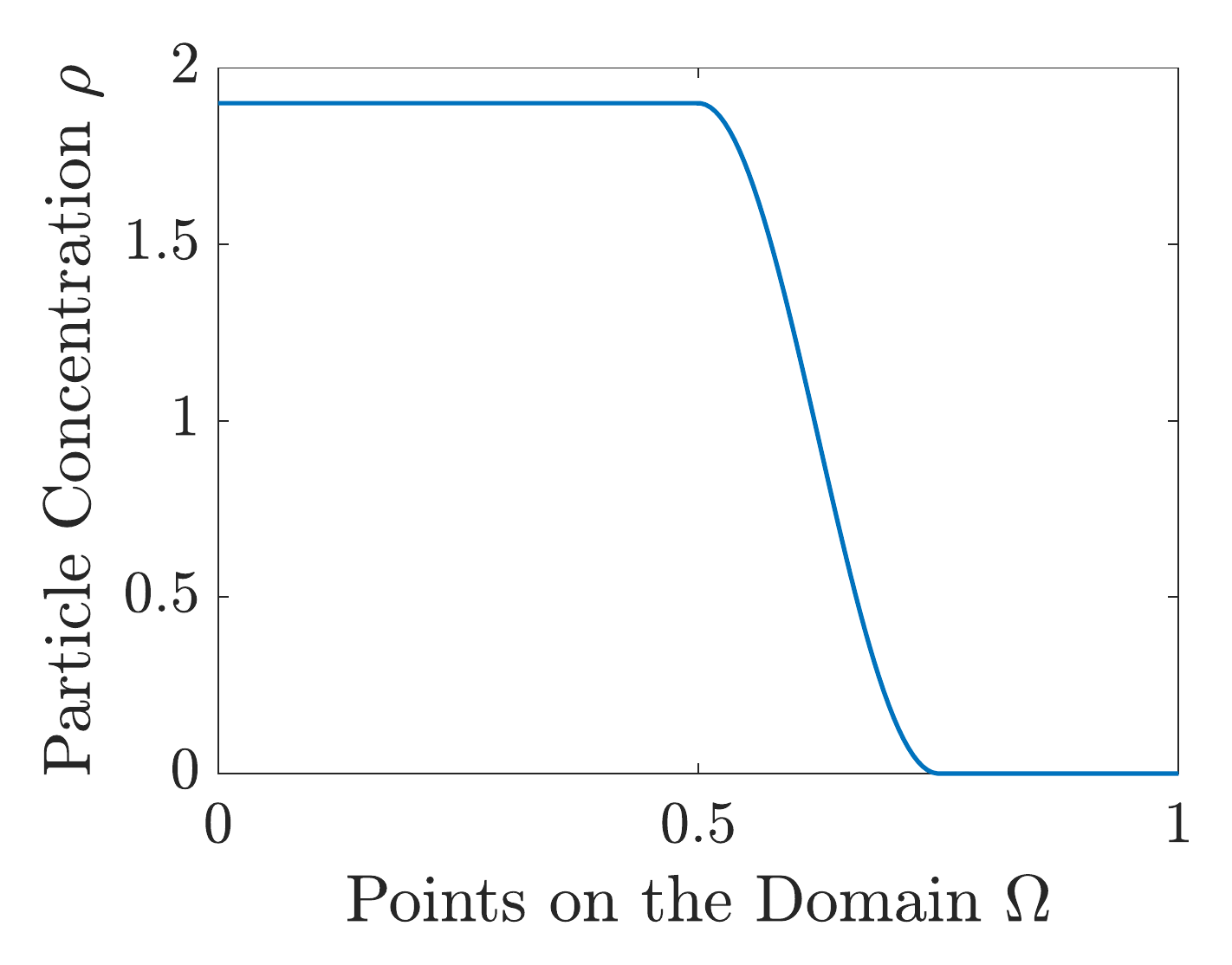} &
\includegraphics[width=0.40\textwidth]{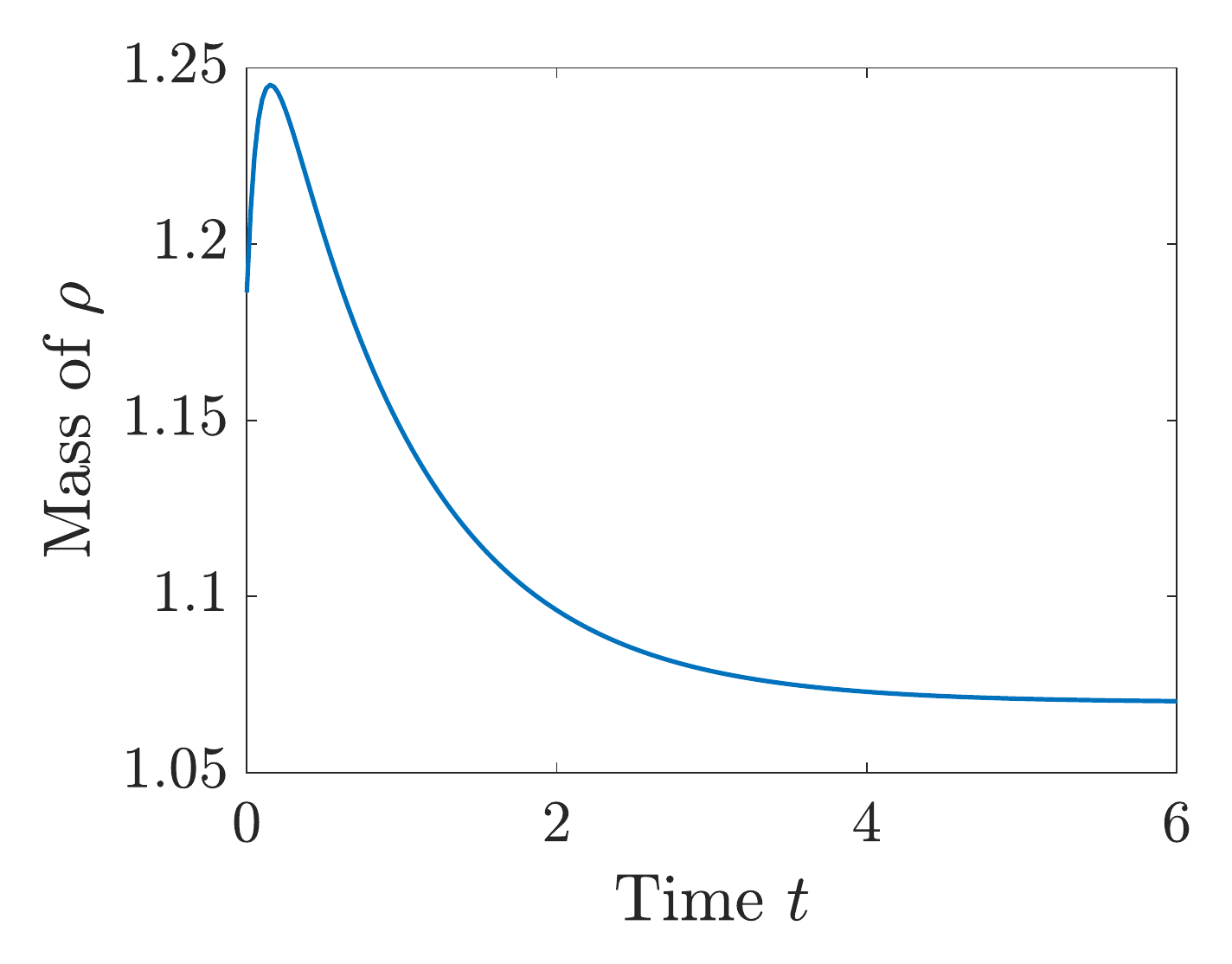} \\
		(a) $\rho_0$ as in \eqref{eq:rho0mass1} & (b) Mass evolution\\
		\includegraphics[width=0.40\textwidth]{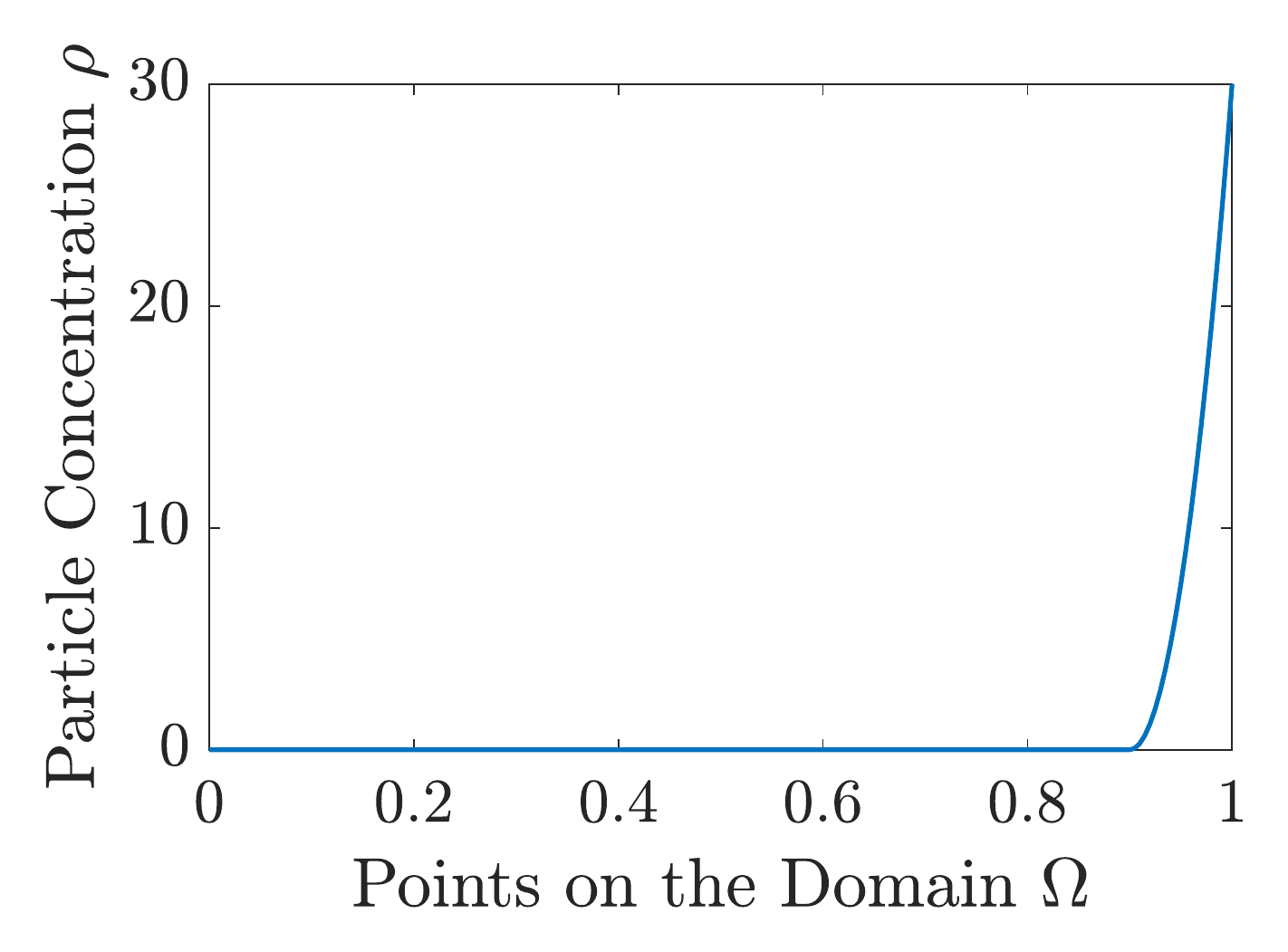} &
\includegraphics[width=0.40\textwidth]{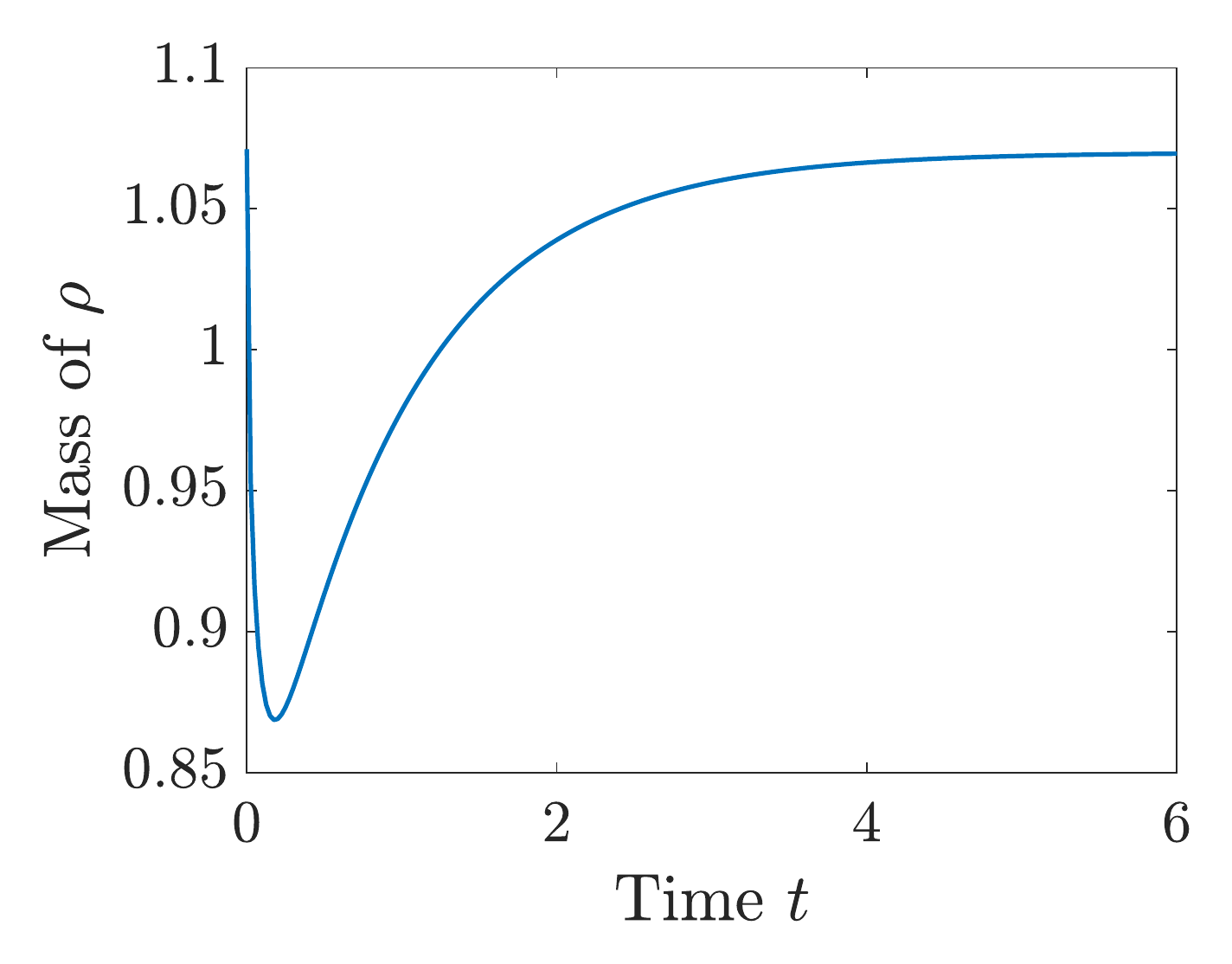} \\
		(c) $\rho_0$ as in \eqref{eq:rho0mass2} & (d) Mass evolution
	\end{tabular}
	\end{center}
	\caption{\textbf{Mass Evolution:} Two examples for non monotone mass evolution for $\alpha =1$ and  $\beta=0.9$.}
	\label{1AExampleforNonMonotoneMassEvolution}
\end{figure}

\section{Linear Model with Spatially Distributed In- and Outflux}
\label{sectionUniformSpatialInandOut}
In our second model we consider the Fokker-Planck equation where influx and outflux is modeled by reaction terms, i.e.
\begin{align}
	\partial_t \rho + \nabla \cdot (-\nabla \rho + \rho \nabla V)  = \alpha - \beta\rho  e^{-V} ,\label{Fokker-Planck1B}
\end{align}
for $t\geq 0$ and $x \in \mathbb{R}^n$, $\alpha \in L^\infty(\Omega)$ with $\alpha \geq 0, ~\beta\in L^\infty(\Omega)$ with $\beta \geq \beta_0 > 0$ for some $\beta_0 > 0$ and $V= V(x)$ is a smooth and bounded potential as in the previous section.
Furthermore we assume a \textit{no flux condition} i.e.
\begin{align}
	\label{BoundaryCondition1B}
	J \cdot n = 0 \text{ for all } x \in \partial\Omega \times (0,T).
\end{align}
We make the following assumptions:
\begin{itemize}
\item[(B1)] The domain $\Omega \subset \mathbb{R}^n$ is a bounded with $\partial \Omega \in C^{1,1}$. 
\item[(B2)] The initial function is non-negative and fulfills $\rho_0 \in L^2(\Omega)$.
\item[(B3)] The potential $V$ is smooth and bounded, i.e. there are constants $K_2 = \inf\lbrace e^{-V}\rbrace$ and $K_3=\sup\lbrace e^{-V}\rbrace$. 
Furthermore $\nabla V$ and $\Delta V$ are also bounded and $-\Delta V - \beta e^V \leq 0$ on $\Omega$.
\item[(B4)] The functions $\alpha$ and $\beta$ satisfy $\alpha \in L^\infty(\Omega)$ with $\alpha \geq \alpha_0 > 0$ and $\beta\in L^\infty(\Omega)$ with $\beta \geq \beta_0 > 0$ for some $\alpha_0,\beta_0 > 0$.
\end{itemize}

\begin{figure}[H]
	\begin{center}
\includegraphics[width=0.3\textwidth]{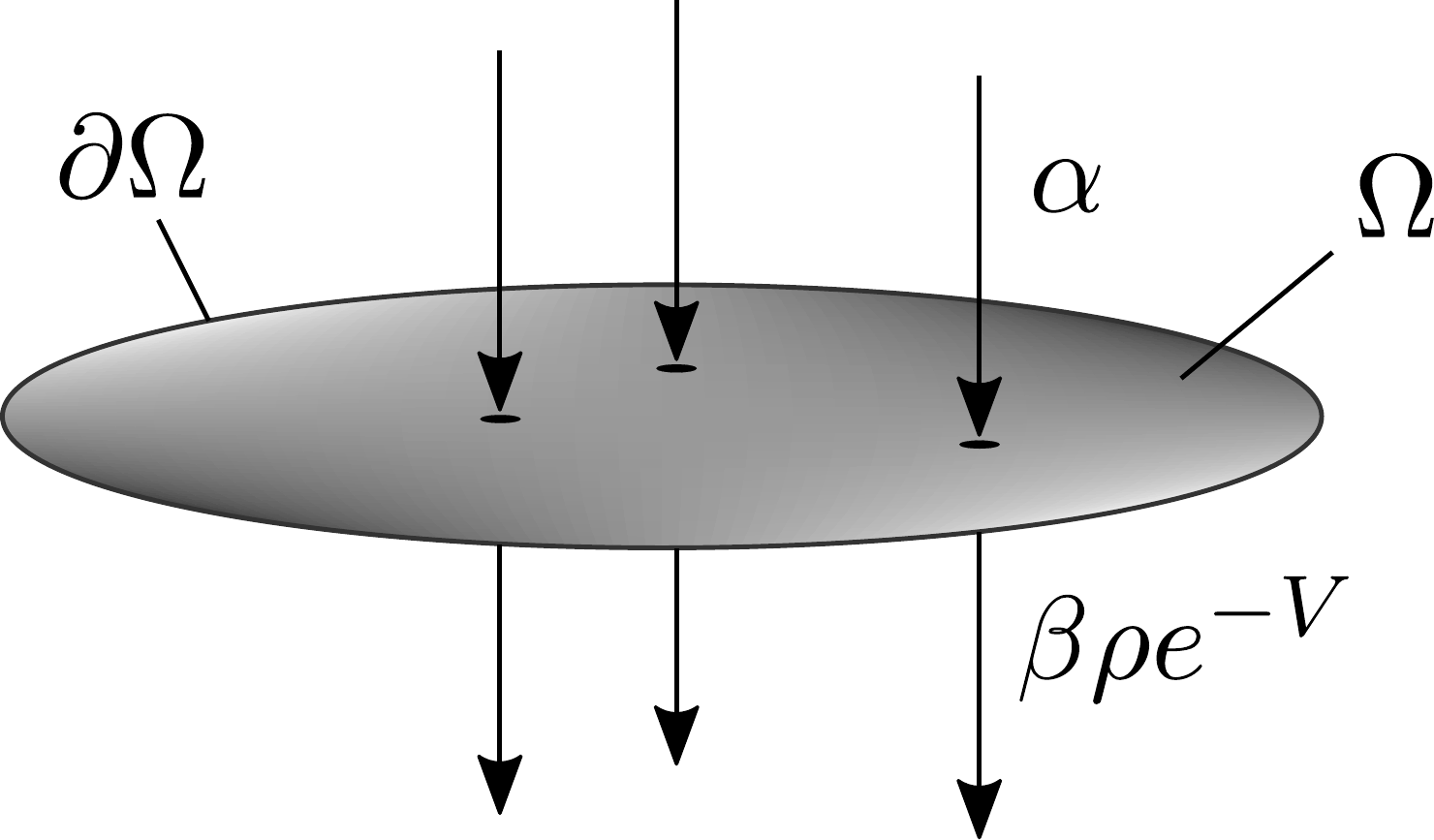} 
	\end{center}
	\caption{Sketch of the geometry of uniform spatial in- and outflux in 2D.}
\end{figure}

The reaction terms in \eqref{Fokker-Planck1B} translate to vesicles entering and leaving the domain at every point with rates $\alpha$ and $\beta\rho e^{-V}$, respectively.
In this case, the stationary solution is given by $\rho_\infty = \frac{\alpha}{\beta} e^V$.
The following theorem, which we will prove in \ref{EquilibriumKapitel}, is the main result of this section:
\begin{theorem}\label{thm:ExponentialDecay1B}
	Let (B1)-(B3) hold, then equation \eqref{Fokker-Planck1B} with no flux boundary conditions obeys the following exponential decay towards equilibrium
	\begin{align*}
		\vert\vert \rho - \rho_\infty\vert\vert^2_{L^1(\Omega)} 
		\leq \frac{1}{K_4}  E(\rho_0\vert\rho_\infty)e^{-\frac{4\beta K_2}{K_1}t},
	\end{align*}
	where $K_1$ depends on $\mathrm{max} \lbrace \vert\vert \rho_\infty \vert\vert_\infty, \vert \vert \rho_0 \vert \vert_\infty \rbrace$ only and $E(\rho|\rho_\infty)$ is the logarithmic relative entropy defined in \eqref{eq:relativelogentropy}.
	Furthermore $K_2 = \inf\lbrace e^{-V}\rbrace$ and $K_4$ come from the Czisz\'{a}r-Kullback-Pinsker inequality in Lemma \ref{CsiszarKullbacknotprobabilitymeasures}.
\end{theorem}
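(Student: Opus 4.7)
The argument proceeds in three stages: (i) derive an entropy--dissipation identity for the logarithmic relative entropy \eqref{eq:relativelogentropy}, (ii) use the reaction term to bound the dissipation from below by $E(\rho|\rho_\infty)$ and invoke Gronwall, and (iii) apply the Csisz\'ar--Kullback--Pinsker inequality of Lemma~\ref{CsiszarKullbacknotprobabilitymeasures} to transfer the exponential entropy decay into the stated $L^1$ bound.

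For (i), the decisive observation is that $\rho_\infty = \tfrac{\alpha}{\beta}e^V$ yields both $\alpha = \beta e^{-V}\rho_\infty$ and $\nabla \log \rho_\infty = \nabla V$, so the flux rewrites as $J = -\rho\,\nabla \log(\rho/\rho_\infty)$ and the reaction term as $\beta e^{-V}(\rho_\infty-\rho)$. Differentiating \eqref{eq:relativelogentropy} in time, substituting \eqref{Fokker-Planck1B}, and using the no-flux boundary condition \eqref{BoundaryCondition1B} in the integration by parts to annihilate all surface terms, yields
\begin{equation*}
-\tfrac{d}{dt}E(\rho|\rho_\infty) \;=\; \int_\Omega \rho\,\bigl|\nabla\log(\rho/\rho_\infty)\bigr|^2\, dx \;+\; \int_\Omega \beta e^{-V}(\rho-\rho_\infty)\log(\rho/\rho_\infty)\, dx,
\end{equation*}
a sum of two manifestly non-negative contributions (positivity of $\rho$ required for the logarithm to make sense would be obtained as in Lemma~\ref{1AWeakSolNonNeg}).

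For (ii) I would discard the Fisher information term and work only with the reaction dissipation. Writing $u = \rho/\rho_\infty$, the logarithmic-mean inequality $\log u \geq 2(\sqrt u -1)/(\sqrt u +1)$ combined with $u-1=(\sqrt u -1)(\sqrt u +1)$ yields the pointwise bound $(u-1)\log u \geq 4(\sqrt u -1)^2$, so that, using $\beta\geq \beta_0$ and $e^{-V}\geq K_2$,
\begin{equation*}
\int_\Omega \beta e^{-V}(\rho-\rho_\infty)\log(\rho/\rho_\infty)\, dx \;\geq\; 4\beta_0 K_2 \int_\Omega \rho_\infty\,\bigl(\sqrt u -1\bigr)^2\, dx.
\end{equation*}
Provided $u$ is uniformly bounded by some $M$, the quotient $(u\log u - u + 1)/(\sqrt u -1)^2$ is finite and bounded on $[0,M]$; defining $K_1$ as its supremum gives the pointwise comparison $\rho_\infty(\sqrt u -1)^2 \geq K_1^{-1}\rho_\infty(u\log u - u + 1)$. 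Integrating closes the chain to $-\tfrac{d}{dt} E \geq (4\beta_0 K_2/K_1)\,E(\rho|\rho_\infty)$, and Gronwall yields $E(\rho|\rho_\infty) \leq E(\rho_0|\rho_\infty)\,e^{-4\beta_0 K_2 t/K_1}$; stage (iii) then follows directly by applying Lemma~\ref{CsiszarKullbacknotprobabilitymeasures}.

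The main obstacle is producing the uniform-in-time $L^\infty$ bound on $u$, or equivalently on $\rho$, that is needed to keep $K_1$ finite and dependent only on $\max\{\|\rho_0\|_\infty,\|\rho_\infty\|_\infty\}$. I would establish it through a weak maximum-principle argument applied to $w := \rho - M_\ast$ with $M_\ast := \max\{\|\rho_0\|_\infty,\|\rho_\infty\|_\infty\}$: at an interior maximum where $w>0$ one has $\rho > \rho_\infty$, so the reaction term $\beta e^{-V}(\rho_\infty - \rho)$ contributes with the favourable negative sign, and assumption (B3) on $-\Delta V - \beta e^V$ is precisely what is needed to ensure the sign-indefinite transport contribution $-\rho\Delta V$ cannot overpower it. Together with the strict positivity and boundedness of $\rho_\infty = (\alpha/\beta)e^V$ inherited from (B3)--(B4), this forces $u$ to stay uniformly bounded by a constant depending only on $\max\{\|\rho_0\|_\infty, \|\rho_\infty\|_\infty\}$, completing the argument.
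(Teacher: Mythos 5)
Your stages (i)--(iii) reproduce the paper's own argument: the dissipation identity, discarding the Fisher information, the elementary inequality $(u-1)\log u\ge 4(\sqrt u-1)^2$ giving $D\ge 4\beta K_2\Vert\sqrt\rho-\sqrt{\rho_\infty}\Vert_2^2$, and the comparison $E(\rho|\rho_\infty)\le K_1\Vert\sqrt\rho-\sqrt{\rho_\infty}\Vert_2^2$ via boundedness of $(u\log u-u+1)/(\sqrt u-1)^2$ on $[0,M]$. The paper packages this last step through the Desvillettes--Fellner function $\phi(x,y)$ of Lemma \ref{Definitionhilfslemma}, which is homogeneous of degree zero, so your formulation in $u=\rho/\rho_\infty$ is the same object (and arguably cleaner: the paper bounds $\phi(\rho,\rho_\infty)\le\phi(L,0)$ even though $\phi(x,y)\to\infty$ as $y\to 0$, whereas you correctly exploit that $\rho_\infty=(\alpha/\beta)e^V$ is bounded away from zero by (B4)). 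Gronwall and Lemma \ref{CsiszarKullbacknotprobabilitymeasures} then finish exactly as in the paper.

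The genuine gap is the uniform bound $\Vert\rho\Vert_\infty\le L$, which is the entire content of the paper's Lemma \ref{Grenzefuer1B} and is indispensable for $K_1<\infty$. Your proposed barrier is the constant $M_\ast=\max\{\Vert\rho_0\Vert_\infty,\Vert\rho_\infty\Vert_\infty\}$, but a constant is not a supersolution of \eqref{Fokker-Planck1B}. At an interior spatial maximum of $\rho$ one has $\nabla\rho=0$ and $\Delta\rho\le 0$, hence $\partial_t\rho\le-\rho\Delta V+\alpha-\beta e^{-V}\rho=\rho\bigl(-\Delta V-\beta e^{-V}\bigr)+\alpha$, and the sign condition in (B3) (with whichever exponent is intended) yields at best $\partial_t\rho\le\alpha>0$: the transport contribution $-\rho\Delta V$ consumes exactly the dissipative part $-\beta e^{-V}\rho$ of the reaction term, leaving the positive source $\alpha$, so nothing forces $\rho$ to stay below $M_\ast$. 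The natural comparison function is $Ce^{V}$ with $C\ge\sup(\alpha/\beta)$ and $C\ge\Vert\rho_0e^{-V}\Vert_\infty$ (the divergence term then vanishes identically and the reaction term is nonpositive), not a constant. This is precisely why the paper passes to the Slotboom variable $u=\rho e^{-V}$, in which the equation reads $e^{V}\partial_t u-\nabla\cdot(e^{V}\nabla u)=\alpha-\beta u$ with no zeroth-order contribution from $\Delta V$; testing with $(u-\tilde L)_+$ for $\tilde L=\max\{\sup(\alpha/\beta),\Vert\rho_0e^{-V}\Vert_\infty\}$ and applying Gronwall gives $u\le\tilde L$, hence the $L^\infty$ bound on $\rho$, without invoking the condition on $\Delta V$ at all. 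Replace your maximum-principle sketch by this truncation (or by the comparison with $Ce^V$) and the proof closes.
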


\subsection{Existence of the Time Dependent Problem}

\begin{definition}
	A function $\rho \in L^2(0,T;H^1(\Omega))$ with $\partial_t \rho \in L^2(0,T;H^{-1}(\Omega))$ is a \textit{weak solution} to equation \eqref{Fokker-Planck1B} supplemented with the boundary condition \eqref{BoundaryCondition1B} if the identity
	\begin{align}
		\label{1BWeakSolution}
		\int_\Omega \partial_t \rho \, \phi \, dx
		- \int_\Omega \big(-\nabla \rho + \rho \nabla V\big) \nabla \phi \, dx 
		+ \int_\Omega \beta \rho e^{-V} \phi \, dx
		= \int_\Omega \alpha \phi \, dx
	\end{align}
	holds for all $\phi \in H^1(\Omega)$ and a.~e. time $0 \leq t \leq T$.
	Again we can rewrite the weak formulation in terms of the \textit{Slotboom variable} $u\coloneqq \rho e^{-V}$ and obtain
	\begin{align}
		\label{1BWeakSolutionSchlotbom}
		\int_\Omega e^V \partial_t u \, \phi ~ dx
		+ \int_\Omega e^V\nabla u \cdot \nabla \phi ~ dx 
		+ \int_\Omega \beta ue^V \phi ~ d\sigma
		= \int_\Omega \alpha \phi ~ d\sigma.
	\end{align}
\end{definition}

\begin{lemma}
	If $\nabla V \in L^\infty (\Omega)$ and $\rho_0 \in L^2(\Omega)$, there exists a unique weak solution to equation \eqref{Fokker-Planck1B} in the sense of definition \ref{1BWeakSolution}.
\end{lemma}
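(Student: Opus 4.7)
The plan is to mirror the existence argument given for the boundary-flux model in the previous section, working throughout in the Slotboom variable $u = \rho e^{-V}$. This transforms the equation into a symmetric divergence-form problem with a weighted mass term, for which the classical Galerkin machinery of Evans, Chapter 7, applies directly.

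First I would define the bilinear form
\begin{align*}
B[u,\phi] = \int_\Omega e^V \nabla u \cdot \nabla \phi \, dx + \int_\Omega \beta\, e^V u \phi \, dx
\end{align*}
on $H^1(\Omega)\times H^1(\Omega)$. Continuity follows from Cauchy--Schwarz together with the essential boundedness of $e^V$ (assumption (B3)) and of $\beta$ (assumption (B4)). In contrast to the boundary-flux case \eqref{eq:garding}, the mass-type term now lives on the whole bulk rather than on a piece of the boundary, so the pointwise lower bounds $e^V \geq K_2 > 0$ and $\beta \geq \beta_0 > 0$ yield full coercivity directly, without recourse to Friedrichs' inequality:
\begin{align*}
B[u,u] \geq K_2 \int_\Omega |\nabla u|^2 \, dx + K_2 \beta_0 \int_\Omega u^2 \, dx \geq K_2 \min\{1,\beta_0\} \, \|u\|_{H^1(\Omega)}^2.
\end{align*}
In particular no G\aa rding correction is needed here. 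The time-weighted mass term $\int_\Omega e^V \partial_t u\, \phi \, dx$ is equivalent to the standard $L^2$ inner product since $e^V$ is bounded above and below, and the right-hand side $\phi \mapsto \int_\Omega \alpha \phi\, dx$ is a bounded linear functional on $H^1(\Omega)$ by (B4) and the embedding $H^1(\Omega)\hookrightarrow L^2(\Omega)$.

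With these ingredients, existence of a weak solution $u \in L^2(0,T;H^1(\Omega))$ with $\partial_t u \in L^2(0,T;H^{-1}(\Omega))$ satisfying \eqref{1BWeakSolutionSchlotbom} follows from the Galerkin construction as laid out in \cite[Section 7.1.2]{EvansPartialDifferentialEquations}: one projects onto a basis of $H^1(\Omega)$, solves the resulting linear ODE system, and extracts a weak limit using the uniform energy estimates that the coercivity of $B$ and Gronwall's lemma provide. For uniqueness I would take the difference $w = u_1 - u_2$ of two solutions with the same initial datum, test \eqref{1BWeakSolutionSchlotbom} with $\phi = w$, and obtain
\begin{align*}
\frac{1}{2}\frac{d}{dt}\int_\Omega e^V w^2 \, dx + B[w,w] = 0,
\end{align*}
so coercivity and Gronwall immediately force $w \equiv 0$. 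Transforming back via $\rho = u e^V$ gives the claimed weak solution of \eqref{Fokker-Planck1B}; the boundedness of $e^V$ guarantees that the regularity classes for $\rho$ and $u$ coincide.

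I do not expect any real obstacle here: the only points requiring a little care are verifying that the $\nabla V$ terms stay under control when switching between $\rho$ and $u$ (covered by the $L^\infty$ hypothesis on $\nabla V$) and confirming that the bulk reaction term $\beta \rho e^{-V}$ really does give a coercive contribution of the correct sign after Slotboom transformation, which it does because it becomes $\beta u e^V$ under $u=\rho e^{-V}$. Everything else is boilerplate parabolic theory.
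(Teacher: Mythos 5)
Your proposal is correct and follows essentially the same route as the paper: pass to the Slotboom variable, form the bilinear form $B[u,\phi]=\int_\Omega e^V\nabla u\cdot\nabla\phi\,dx+\int_\Omega \beta e^V u\phi\,dx$, and invoke the Galerkin theory of Evans, Section 7.1.2. Your additional observation that $B$ is in fact fully coercive on $H^1(\Omega)$ (because the reaction term now acts on the whole bulk and $\beta\geq\beta_0>0$), so that no G\aa rding correction is needed, is accurate and a mild sharpening of the paper's argument, which only cites the G\aa rding inequality.
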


\begin{proof}
	We use the Slotboom formulation, as in \eqref{1BWeakSolutionSchlotbom}, which yields the bilinear form
	\begin{align*}
		B[u, \phi]
		= \int_\Omega e^V\nabla u \cdot \nabla \phi ~ dx 
		+ \int_\Omega \beta ue^V \phi ~ d\sigma,
	\end{align*}
	that obviously fulfills \cite[6.2.2 Theorem 2]{EvansPartialDifferentialEquations} as $V$ is bounded and $\beta \geq \beta_0 > 0$.
	Next we apply the ideas stated in \cite[pp. 353 - 358]{EvansPartialDifferentialEquations}.
\end{proof}

\subsection{Stationary Solutions}

\begin{lemma}
	For $V,\, \nabla V$ and $\Delta V$ bounded and $-\Delta V - \beta e^V \leq 0$ on $\Omega$, the unique stationary solution $\rho_\infty \in H^2(\Omega)$ to 
	\begin{align*}
	 \nabla \cdot (-\nabla \rho + \rho \nabla V) + \beta\rho  e^{-V} = \alpha
	\end{align*}
 with boundary conditions \eqref{BoundaryCondition1B} is given by
 \begin{align*}
  \rho_\infty = \frac{\alpha}{\beta} e^{V}.
 \end{align*}
 \end{lemma}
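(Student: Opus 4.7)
The plan is to prove the lemma in two steps: existence by direct substitution of the explicit candidate, and uniqueness via a Slotboom-type energy argument analogous to the one used for the boundary model in Lemma \ref{1AStationarySolutionStrictlyPositive}.

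For existence, I would simply plug $\rho_\infty = \frac{\alpha}{\beta} e^V$ into the stationary equation. A direct computation gives
\begin{align*}
-\nabla \rho_\infty + \rho_\infty \nabla V = -\frac{\alpha}{\beta} e^V \nabla V + \frac{\alpha}{\beta} e^V \nabla V = 0,
\end{align*}
so the flux $J_\infty$ vanishes identically. Hence $\nabla \cdot J_\infty = 0$ and the no-flux boundary condition $J_\infty \cdot n = 0$ on $\partial \Omega$ is trivially satisfied. The reaction balance gives $\beta \rho_\infty e^{-V} = \beta \cdot \frac{\alpha}{\beta} e^V \cdot e^{-V} = \alpha$, confirming that the PDE is satisfied pointwise. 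The $H^2$ regularity of $\rho_\infty$ is inherited directly from the smoothness of $V$ assumed in (B3) together with boundedness of $\alpha$ and $\beta$, with no need to invoke elliptic regularity.

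For uniqueness, I would pass to the Slotboom variable $u = \rho e^{-V}$ and reuse the bilinear form that appeared in the existence proof. If $\rho_1, \rho_2$ are two stationary solutions, their Slotboom difference $w = (\rho_1 - \rho_2) e^{-V}$ satisfies the homogeneous identity
\begin{align*}
\int_\Omega e^V \nabla w \cdot \nabla \phi \, dx + \int_\Omega \beta e^V w \, \phi \, dx = 0
\end{align*}
for all $\phi \in H^1(\Omega)$. Testing with $\phi = w$ yields
\begin{align*}
\int_\Omega e^V |\nabla w|^2 \, dx + \int_\Omega \beta e^V w^2 \, dx = 0.
\end{align*}
Both integrands are nonnegative, $e^V \geq K_2 > 0$ by (B3) and $\beta \geq \beta_0 > 0$ by (B4), so the second term alone forces $w = 0$ almost everywhere, hence $\rho_1 = \rho_2$.

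The argument is in fact cleaner than the analogous one for the boundary model in Section \ref{sectionBoundaryInandOut}, because here the bilinear form is fully coercive on $H^1(\Omega)$ (thanks to the bulk reaction $\beta e^V$), so no Friedrichs' inequality is needed. The only subtlety worth flagging is that the explicit formula $\rho_\infty = \frac{\alpha}{\beta} e^V$ genuinely satisfies the equation only when $\alpha/\beta$ is spatially constant; otherwise one picks up an extra term $-\nabla \cdot (e^V \nabla(\alpha/\beta))$ in the flux divergence that spoils the balance with the reaction. I would read this as an implicit hypothesis of the lemma, and would note that the sign condition $-\Delta V - \beta e^V \leq 0$ listed in (B3) is not in fact used in either part of the proof.
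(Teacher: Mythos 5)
Your proof is correct, but it takes a genuinely different route from the paper. The paper does not verify the explicit formula at all: it rewrites the equation in non-divergence form as $-\Delta \rho + \nabla\rho\cdot\nabla V + (\Delta V + \beta e^{-V})\rho = \alpha$ and invokes a standard existence-and-uniqueness theorem for second-order elliptic Neumann problems (Grisvard, Thm.\ 2.4.2.6), which is exactly where the hypothesis $-\Delta V - \beta e^{V}\leq 0$ enters --- it guarantees the sign of the zeroth-order coefficient needed for that theorem. You instead check the candidate by substitution and prove uniqueness by testing the homogeneous Slotboom identity with the difference itself; since the resulting quadratic form is a sum of two nonnegative terms with $\beta\geq\beta_0>0$, coercivity is automatic and, as you rightly observe, the sign condition on $\Delta V$ becomes superfluous. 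Your energy argument is the more elementary and self-contained one (and could be pushed further: Lax--Milgram on the same coercive bilinear form would give existence for general data without the explicit formula, again with no sign condition and less boundary regularity), whereas the paper's route buys $H^2$ regularity and existence independently of the closed-form guess. Your final caveat is also well taken and applies equally to the paper's statement: the formula $\rho_\infty = \frac{\alpha}{\beta}e^{V}$ solves the equation only when $\nabla(\alpha/\beta)=0$, even though (B4) allows $\alpha,\beta\in L^\infty(\Omega)$; this is an implicit hypothesis that the paper leaves unstated. One very minor point: you write that $H^2$ regularity needs ``no elliptic regularity'' --- true here only because you have the explicit formula in hand, so it again hinges on $\alpha/\beta$ being constant and $V$ smooth.
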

\begin{proof}
Rewriting the problem as 
\begin{align*}
 -\Delta \rho + \nabla\rho \cdot \nabla V + (\Delta V + \beta e^{-V})\rho = \alpha,
\end{align*}
we see that our assumption on $V$ guarantees a sign on the lower order term and the assertion follows, e.g. from \cite[Thm. 2.4.2.6]{Grisvard85}.
\end{proof}

\begin{remark}
	If we choose $\beta=\alpha=0$, we lose uniqueness of the solution of \eqref{Fokker-Planck1B}.
	But if we demand the solution to be positive, we can restore uniqueness see \cite[Theorem 1.1]{DroniouJeromeVaszquezDiffusionEquations}.	
\end{remark}

Having proven the uniqueness of the stationary solution, enables us to give an upper bound for $\rho$, which we need for the equilibrium property. i.e:

\begin{lemma}
	\label{Grenzefuer1B}
 	If \eqref{Fokker-Planck1B} and \eqref{BoundaryCondition1B} hold, $\rho\geq 0$ and there is an upper bound $L$ for $\rho$ i.e. for every $t \in (0,T)$ there holds
 	\begin{align*}
 		0 \leq \vert \vert \rho \vert \vert_\infty
 		\leq \mathrm{max} \lbrace \vert\vert \rho_\infty \vert\vert_\infty, \vert \vert \rho_0 \vert \vert_\infty \rbrace 
 		\coloneqq L.
	\end{align*} 		
\end{lemma}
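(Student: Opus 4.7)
My plan is to handle the two assertions separately via truncation arguments in weak form.

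For the non-negativity $\rho \geq 0$, the cleanest strategy mirrors the one already used in Lemma~\ref{1AWeakSolNonNeg}: pass to the Slotboom variable $u = \rho e^{-V}$, whose weak equation is \eqref{1BWeakSolutionSchlotbom}, and test against $\phi = u^{-} := \min\{u,0\} \in H^{1}(\Omega)$. Using $\nabla u \cdot \nabla u^{-} = |\nabla u^{-}|^{2}$ and $u\,u^{-} = (u^{-})^{2}$, together with $\alpha \geq 0$ and $u^{-}\leq 0$, the test produces
\[
\tfrac{1}{2}\tfrac{d}{dt}\int_{\Omega} e^{V}(u^{-})^{2}\,dx + \int_{\Omega} e^{V}|\nabla u^{-}|^{2}\,dx + \int_{\Omega}\beta\,e^{V}(u^{-})^{2}\,dx \leq 0.
\]
Since by (B2) we have $u_{0} = \rho_{0}e^{-V} \geq 0$, so that $u_{0}^{-}\equiv 0$, Gr\"onwall forces $u^{-}\equiv 0$ and hence $\rho = u e^{V}\geq 0$.

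For the $L^{\infty}$-bound I would apply a Stampacchia-type truncation to the shifted Slotboom variable $\tilde u := (\rho-\rho_{\infty})e^{-V} = u - \tfrac{\alpha}{\beta}$. Because $\rho_{\infty} = \tfrac{\alpha}{\beta}e^{V}$ solves the stationary problem, the inhomogeneity $\alpha$ cancels and $\tilde u$ satisfies the homogeneous equation
\[
 e^{V}\partial_{t}\tilde u - \nabla\cdot(e^{V}\nabla \tilde u) + \beta\,\tilde u = 0,\qquad \partial_{n}\tilde u = 0 \text{ on }\partial\Omega.
\]
Testing against $(\tilde u - N)^{+}$ for a constant $N \geq 0$ gives
\[
\tfrac{1}{2}\tfrac{d}{dt}\!\int_{\Omega} e^{V}\bigl((\tilde u - N)^{+}\bigr)^{2}dx + \!\int_{\Omega} e^{V}|\nabla(\tilde u - N)^{+}|^{2}dx + \!\int_{\Omega}\beta\bigl((\tilde u - N)^{+}\bigr)^{2}dx + N\!\int_{\Omega}\!\beta(\tilde u - N)^{+}dx = 0.
\]
For $N \geq 0$ every contribution except the time derivative is non-negative, so the $L^{2}(e^{V}dx)$-norm of $(\tilde u - N)^{+}$ is non-increasing. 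Picking $N$ equal to the essential supremum of $\tilde u_{0}^{+}$ kills the initial datum and forces $\tilde u \leq N$ a.e.\ for all $t$; the symmetric test against $(-\tilde u - N)^{+}$ provides the lower bound. Reinserting $\rho = \rho_{\infty} + \tilde u\,e^{V}$ and using the two-sided bounds on $e^{\pm V}$ from (B3) converts this into a pointwise estimate on $\rho$ in terms of $\|\rho_{0}\|_{\infty}$ and $\|\rho_{\infty}\|_{\infty}$, delivering the claimed constant $L = \max\{\|\rho_{\infty}\|_{\infty},\|\rho_{0}\|_{\infty}\}$.

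The main technical point is the choice of truncation: subtracting $\rho_{\infty}$ inside the Slotboom framework serves two purposes at once, since it removes the source $\alpha$ and it respects the Neumann structure $\partial_{n}\tilde u = 0$, so that every boundary integral arising from the integration by parts drops out without any additional hypothesis on $V$ at $\partial\Omega$. The reaction term $\beta\,\tilde u(\tilde u - N)^{+}$, whose sign is good precisely because $N\geq 0$, then supplies the dissipation that renders the truncated norm monotonically non-increasing; converting between $\tilde u$ and $\rho$ only introduces the multiplicative factors controlled by the bounds on $e^{\pm V}$ in (B3).
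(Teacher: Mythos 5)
Your argument is correct and takes essentially the same route as the paper: non-negativity via testing the Slotboom formulation with $u^-$ exactly as in Lemma \ref{1AWeakSolNonNeg}, and the upper bound via a truncation argument on the Slotboom variable in which the sign of the reaction term $\beta(\cdot)$ supplies the dissipation. The only cosmetic difference is that you first subtract the stationary profile $\alpha/\beta$ to homogenize the equation and then truncate at $N=\Vert \tilde u_0^+\Vert_\infty$, whereas the paper truncates directly at the constant $\tilde L=\max\lbrace \alpha\beta^{-1},\Vert \rho_0 e^{-V}\Vert_\infty\rbrace$; both hinge on the same sign observation and yield the same bound (and both, like the paper, implicitly use that $\alpha/\beta$ is constant so that $\rho_\infty=\frac{\alpha}{\beta}e^V$ is stationary).
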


\begin{proof}
	We define 
	$\tilde{L} \coloneqq  \mathrm{max} \lbrace \alpha \beta^{-1}, \vert \vert \rho_0 e^{-V}\vert \vert_\infty \rbrace$, which shall be an upper bound for $u$ reminding ourselves that $ \rho_0 e^{-V} = u_0$.
	As $e^{-V} \partial_t u = \partial_t \rho$ and $\tilde{L}$ is a constant we can write
	\begin{align*}
		e^{-V}\partial_t (u-\tilde{L}) + \nabla \cdot (-e^{-V}\nabla (u-\tilde{L})) + \beta u = \alpha.
	\end{align*}
	Using the weak formulation of this equation with test function $(u-\tilde{L})_+ \in H^1(\Omega)$, which has the derivative
	  \[
     \nabla(u -\tilde{L})_+=\left\{\begin{array}{ll} \nabla(u -\tilde{L})_+, & (u -\tilde{L})_+ > 0, \\
         0, & \text{otherwise,}\end{array}\right. 
  \]	
	we obtain 
	\begin{align*}
		\partial_t \int_\Omega e^{-V} \frac{(u-\tilde{L})_+^2}{2} \, dx 
		+ \int_\Omega e^{-V} \vert \nabla (u-\tilde{L})_+\vert^2\, dx 
		= - \beta \int_\Omega (u- \frac{\alpha}{\beta})(u-\tilde{L})_+\, dx
	\end{align*}
	by integration by parts, using $J \cdot n = (e^{-V} \nabla u) \cdot n=0$ on $\partial \Omega$. As the second integral is positive, we can omit it to achieve
	\begin{align*}
		\partial_t \int_\Omega e^{-V} \frac{(u-\tilde{L})_+^2}{2} \,dx 
		\leq - \beta \int_\Omega (u- \frac{\alpha}{\beta})(u-\tilde{L})_+\, dx 
		\leq - \beta \int_\Omega (u-\tilde{L})_+^2 \,dx.
	\end{align*}
	Now we use $\frac{e^{-V}}{2} \geq C$ as $V$ is bounded, and Gronwall's lemma yields
	\begin{align*}
		\int_\Omega (u-\tilde{L})^2_+ dx \leq e^{- \frac{C}{\beta} t} \int (u_0 - \tilde{L})^2_+ dx = 0,
	\end{align*}
	so $u \leq \tilde{L}$ a.e. and therefore $\rho \leq \tilde{L} \sup\lbrace e^{-V}\rbrace=L$ a.e. in $\Omega$ and for every $t \in (0,T)$. Repeating the same argument with test function $u^{-}= \min \lbrace u,0\rbrace$ gives $0 \leq \rho$.
\end{proof}

\subsection{Long Time Behavior}
\label{EquilibriumKapitel}
Our proof of the exponential decay to equilibrium is closely related to ideas presented in \cite{DesvillettesFellnerExponentialdecay}.
A central tool in this paper which is used to give an upper bound on the relative entropy is the following lemma which we state for the sake of completeness:
\begin{lemma}\cite[Lemma 2.1]{DesvillettesFellnerExponentialdecay}
	\label{Definitionhilfslemma}
	The function
	\begin{align*}
		\phi(x,y)=\frac{x(\log(x)-\log(y))-(x-y)}{(\sqrt{x}-\sqrt{y})^2}
	\end{align*}
	is continuous on $(0, + \infty)^2$. 
	For all $x>0$ the function $\phi(x,\cdot )$ is strictly decreasing and respectively
	for all $y>0$ the function $\phi(\cdot,y)$ is strictly increasing on $(0, + \infty)$.
	Finally it satisfies
	\begin{align*}
		\lim_{x \rightarrow 0} \phi(x,y)&=1, \\ 
		\phi(y,y)&=2 \text{ and } 
		\phi(x,y) \sim \log x \text{ for } x \rightarrow \infty.
	\end{align*}
\end{lemma}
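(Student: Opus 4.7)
The plan is to reduce the two-variable claim to a one-variable calculus problem via a scaling substitution, after which continuity, monotonicity and the limits all become elementary.

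Introduce $u = \sqrt{x/y}$, so $x = y u^2$ and $\sqrt{x}-\sqrt{y}=\sqrt{y}\,(u-1)$. A direct computation shows that the factor $y$ cancels between numerator and denominator, giving
\begin{align*}
\phi(x,y) \;=\; g(u), \qquad g(u) \;:=\; \frac{2u^2\log u - u^2 + 1}{(u-1)^2}, \qquad u>0.
\end{align*}
Since $(x,y)\mapsto \sqrt{x/y}$ is smooth on $(0,\infty)^2$, continuous, strictly increasing in $x$ for fixed $y$, and strictly decreasing in $y$ for fixed $x$, every assertion of the lemma reduces to the corresponding one for $g$: continuity of $g$ on $(0,\infty)$ (the only possibly delicate point being $u=1$), strict monotonicity of $g$ on $(0,\infty)$, and the three asymptotics $g(1)=2$, $\lim_{u\to 0^+}g(u)=1$, $g(u)\sim 2\log u$ as $u\to\infty$. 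Note in particular that $2\log u = \log(x/y)\sim \log x$ for $x\to\infty$ with $y$ fixed, which gives the stated behaviour of $\phi$ at infinity, while $\lim_{u\to 0^+}g(u)=1$ follows at once from $u^2\log u\to 0$.

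The value $g(1)=2$ and continuity at $u=1$ are obtained by a second-order Taylor expansion around $u=1$: writing $u=1+s$, both numerator and denominator vanish to second order in $s$, with leading coefficients $2$ and $1$ respectively, so $g$ extends continuously with $g(1)=2$. The core step is strict monotonicity. I will compute $g'$ directly; after cancelling one factor of $(u-1)$ between numerator and denominator, one obtains
\begin{align*}
g'(u) \;=\; \frac{2\,p(u)}{(u-1)^3}, \qquad p(u)\;:=\;u^2 - 1 - 2u\log u.
\end{align*}
It then suffices to show that $p(u)$ has the same sign as $(u-1)^3$ for every $u\neq 1$. This follows from $p(1)=0$ together with $p'(u)=2(u-1-\log u)>0$ for all $u\neq 1$, which is the standard strict inequality $\log u < u-1$ for $u\neq 1$. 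Hence $p<0$ on $(0,1)$ and $p>0$ on $(1,\infty)$, so $g'(u)>0$ on $(0,\infty)\setminus\{1\}$; the diagonal value $g'(1)=2/3$, again via Taylor, shows $g$ is $C^1$ at $1$ and strictly increasing throughout.

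The main obstacle is not conceptual but bookkeeping: the cancellation in $g'$ must be done cleanly (a single sign slip there would reverse the direction of monotonicity), and one has to keep the substitution direction straight when translating "$g$ increasing" into "$\phi(x,\cdot)$ decreasing and $\phi(\cdot,y)$ increasing" through the composition with $u=\sqrt{x/y}$. Apart from these routine checks, every step is a short calculation once the substitution has removed the two-variable nature of the problem.
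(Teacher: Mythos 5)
Your proof is correct. Note that the paper itself does not prove this lemma: it is quoted verbatim with a citation to Desvillettes--Fellner, so there is no in-paper argument to compare against; your reduction via the zero-homogeneity of $\phi$ (i.e.\ $\phi(\lambda x,\lambda y)=\phi(x,y)$, exploited through $u=\sqrt{x/y}$) is the natural, essentially standard route, and it makes the statement self-contained. The computations check out: with $N(u)=2u^2\log u-u^2+1$ one has $N'(u)=4u\log u$, and the quotient rule gives exactly $g'(u)=2\bigl(u^2-1-2u\log u\bigr)/(u-1)^3$; the sign analysis via $p(1)=0$ and $p'(u)=2(u-1-\log u)>0$ for $u\neq 1$ is right, and the expansion $g(1+s)=2+\tfrac{2}{3}s+O(s^2)$ confirms both the removable singularity (hence continuity of $\phi$ across the diagonal $x=y$) and $\phi(y,y)=2$. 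One small simplification: you do not need differentiability of $g$ at $u=1$ at all --- $g'>0$ on $(0,1)$ and on $(1,\infty)$ together with continuity at $u=1$ already yields strict monotonicity on all of $(0,\infty)$, so the $g'(1)=2/3$ computation is dispensable. The endpoint behaviour is likewise correct: $u^2\log u\to 0$ gives $\lim_{u\to 0^+}g(u)=1$, and $g(u)\sim 2\log u=\log(x/y)\sim\log x$ for fixed $y$ as $x\to\infty$.
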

With this at hand, we proceed to the proof of theorem \ref{thm:ExponentialDecay1B}:
\vspace*{2ex}
\\
\textit{Proof of theorem \ref{thm:ExponentialDecay1B}}: The previous lemma enables us to rewrite the logarithmic entropy \eqref{eq:relativelogentropy} as
\begin{align}
	\label{1BRelativeEntropy}
	E(\rho\vert\rho_\infty) 
	&= \int_\Omega \phi ( \rho , \rho_\infty )  (P-P_\infty )^2	dx,
\intertext{where $P_\infty \coloneqq \sqrt{\rho_\infty}$ and $P \coloneqq \sqrt{\rho}$. As $\phi$ is monoton increasing in the first  and monoton decreasing in the second component, $\rho \leq L$ almost everywhere and $\rho_\infty = \frac{\alpha}{\beta}e^V \geq 0$, we gain}
	E(\rho\vert\rho_\infty) &\leq \int_\Omega \phi(L, 0)(P-P_\infty )^2 = K_1 \vert \vert P - P_\infty \vert\vert^2_2,
	\notag
\end{align}
where $K_1 \coloneqq \max \lbrace 1 , \phi(L, 0)\rbrace >0$. 
Note that we just take the maximum with $1$ to ensure that $K_1$ is positive which will make further computations more easy.
Furthermore the entropy dissipation $D(\rho\vert\rho_\infty)$ is given by
\begin{align*}
	D(\rho\vert\rho_\infty) 
	= - \int_\Omega \partial_t \rho \log\Big(\frac{\rho}{\rho_\infty}\Big) dx
	&= -\int_\Omega ( - \nabla \cdot J + \alpha - \beta\rho e^{-V}) \log\Big(\frac{\rho}{\rho_\infty} \Big) dx
\end{align*}
and by integration by parts with no flux boundary conditions we get
\begin{align*}
	&= \int_\Omega \frac{\vert J \vert^2}{\rho} - ( \alpha - \beta\rho e^{-V}) \log\Big(\frac{\rho}{\rho_\infty} \Big) dx 
	\geq  \beta \int_\Omega e^{-V} (\rho - \rho_\infty) \log\Big(\frac{\rho}{\rho_\infty} \Big) dx \\
	&\geq  4 \beta \int_\Omega e^{-V} (P-P_\infty)^2 dx 
	\geq  4 \beta K_2 \vert \vert P - P_\infty \vert\vert^2_2,
\end{align*}
where the penultimate inequality holds because of the elementary inequality $(a-b)(\log(a) - \log(b)) \geq 4(\sqrt{a}-\sqrt{b})^2$.
As $V$ is bounded, there exists a constant $K_2> 0$ with $\inf \lbrace e^{-V}\rbrace =K_2$.
Combining the estimates for the relative entropy and the entropy dissipation, we achieve
\begin{align}
	\label{LogSob1B}
	\frac{1}{K_1} E(\rho\vert\rho_\infty) \leq \vert \vert P - P_\infty \vert\vert^2_2 &\leq \frac{1}{4\beta K_2} D(\rho\vert\rho_\infty).
\end{align}
Combining this with Gronwall's lemma, we obtain
\begin{align}
	\label{Gronwall1B}
	E(\rho\vert\rho_\infty)\leq E(\rho_0\vert\rho_\infty)e^{-\frac{4\beta K_2}{K_1}t}.
\end{align}
To conclude the proof we use the following lemma which is a generalization of the Cziszar-Kullback-Pinsker inequality for functions which are not probability densities.  $\qquad \square$
\begin{lemma} \cite[Lemma A.1]{HaskovecHittmeirMarkowichMielkeDecay}
	\label{CsiszarKullbacknotprobabilitymeasures}
	Let $\Omega$ be a measurable domain in $\R^d$.
	Let $\rho,\rho_\infty \colon \Omega \rightarrow \R^+$ be measurable.
	Then,
	\begin{align}
		E(\rho\vert\rho_\infty) \geq \underbrace{\frac{3}{2\vert\vert \rho \vert\vert_{L^1(\Omega)} + 4\vert\vert \rho_\infty \vert\vert_{L^1(\Omega)}}}_{K_4} \vert\vert \rho-\rho_\infty \vert\vert^2_{L^1(\Omega)}.
		\label{equationCsiszarKullbacknotprobabilitymeasures}
	\end{align}
\end{lemma}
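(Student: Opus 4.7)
The plan is to reduce the statement to a sharper pointwise inequality between the integrand of $E(\rho|\rho_\infty)$ and a weighted quadratic, and then to promote the weighted $L^2$ estimate so obtained to an $L^1$ estimate via a weighted Cauchy--Schwarz inequality. Writing $\Phi(x,y) := x \log(x/y) - (x - y)$ so that $E(\rho|\rho_\infty) = \int_\Omega \Phi(\rho, \rho_\infty)\,dx$, the target pointwise bound is
\begin{equation*}
  \Phi(x,y) \;\geq\; \frac{3\,(x-y)^2}{2x + 4y}, \qquad x, y > 0;
\end{equation*}
the crucial feature is the linear dependence of the denominator on $x$ and $y$, which is what lets $\|\rho\|_{L^1}$ and $\|\rho_\infty\|_{L^1}$ appear on the right-hand side of the final estimate with no other factors.

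Assuming this pointwise inequality, the rest is a one-line integration. Setting $w := 2\rho + 4\rho_\infty$, the Cauchy--Schwarz inequality yields
\begin{equation*}
  \|\rho - \rho_\infty\|_{L^1(\Omega)}^2 \;=\; \Bigl( \int_\Omega \frac{|\rho - \rho_\infty|}{\sqrt{w}}\,\sqrt{w}\, dx \Bigr)^{2} \;\leq\; \Bigl( \int_\Omega \frac{(\rho - \rho_\infty)^2}{w}\, dx \Bigr) \Bigl( \int_\Omega w\, dx \Bigr),
\end{equation*}
while $\int_\Omega w\,dx = 2\|\rho\|_{L^1(\Omega)} + 4\|\rho_\infty\|_{L^1(\Omega)}$. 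Combining with the pointwise bound gives
\begin{equation*}
  E(\rho|\rho_\infty) \;\geq\; \int_\Omega \frac{3\,(\rho-\rho_\infty)^2}{w}\, dx \;\geq\; \frac{3}{2\|\rho\|_{L^1(\Omega)} + 4\|\rho_\infty\|_{L^1(\Omega)}}\,\|\rho - \rho_\infty\|_{L^1(\Omega)}^2,
\end{equation*}
which is precisely the claim with the advertised constant $K_4$.

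The substantive work is therefore the pointwise inequality itself. By the joint degree-one homogeneity of both sides in $(x,y)$, this reduces to the one-variable statement $h(u) := (u \log u - u + 1)(2u + 4) - 3(u-1)^2 \geq 0$ for $u > 0$. A direct computation gives $h(1) = h'(1) = h''(1) = h'''(1) = 0$ together with $h^{(4)}(1) > 0$, which yields non-negativity in a neighborhood of $u = 1$; the endpoint asymptotics $h(0^+) = 1$ and $h(u) \sim 2u^2 \log u \to \infty$ as $u \to \infty$ handle the tails. The remaining and substantive step is to exclude interior zeros on $(0,1)$ and on $(1, \infty)$, which one can do by either tracking the sign of $h'(u)/(u-1)$ monotonically on each side of the diagonal, or by writing $h(u) = (u-1)^4\, g(u)$ with $g$ continuous and $g(1) = h^{(4)}(1)/24 > 0$ and bounding $g$ from below by an elementary estimate. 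The main obstacle is exactly that $\Phi$ and the proposed bound agree to \emph{third} order at the diagonal $x = y$, so the standard quadratic Taylor-remainder bound $\Phi(x,y) \geq (x-y)^2/(2 \max\{x,y\})$ is strictly too weak; a genuine fourth-order analysis of $h$ is unavoidable.
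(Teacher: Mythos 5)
The paper offers no proof of this lemma at all --- it is quoted verbatim from the cited reference (Lemma A.1 of Haskovec--Hittmeir--Markowich--Mielke), so there is no in-paper argument to compare against. Your proposal reconstructs what is essentially the standard proof of that result: a pointwise elementary inequality $\Phi(x,y)=x\log(x/y)-(x-y)\ge 3(x-y)^2/(2x+4y)$ followed by a weighted Cauchy--Schwarz step. The structure is correct: the reduction by joint degree-one homogeneity to $h(u)=(u\log u-u+1)(2u+4)-3(u-1)^2\ge 0$ on $(0,\infty)$ is right, the Cauchy--Schwarz argument with weight $w=2\rho+4\rho_\infty$ and $\int_\Omega w\,dx=2\|\rho\|_{L^1}+4\|\rho_\infty\|_{L^1}$ is right (using $\rho,\rho_\infty\ge0$), and the constant comes out exactly as claimed.

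The one place you stop short is the verification that $h\ge 0$, which you correctly identify as the substantive step but only sketch, and your remark that a \emph{genuine fourth-order analysis is unavoidable} overstates the difficulty. A third-derivative computation settles it: $h'(u)=4(u+1)\log u-8(u-1)$, $h''(u)=4\log u+4/u-4$, and $h'''(u)=4(u-1)/u^2$. Since $h'''$ is negative on $(0,1)$ and positive on $(1,\infty)$, the function $h''$ attains its global minimum at $u=1$, where $h''(1)=0$; hence $h''\ge0$ everywhere and $h$ is convex on $(0,\infty)$. Combined with $h(1)=h'(1)=0$, convexity makes $u=1$ the global minimum of $h$, so $h\ge0$ with no case analysis, no endpoint asymptotics, and no need to exclude interior zeros separately. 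With that two-line convexity argument inserted, your proof is complete and correct.
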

Because of lemma \ref{Grenzefuer1B} the constant $K_4$ in lemma \ref{CsiszarKullbacknotprobabilitymeasures} is finite.
Combining inequality \eqref{Gronwall1B} with \eqref{equationCsiszarKullbacknotprobabilitymeasures} we conclude the desired exponential convergence, i.e.,
\begin{align*}
	\vert\vert \rho - \rho_\infty\vert\vert^2_{L^1(\Omega)} \leq \frac{1}{K_4} E(\rho_0\vert\rho_\infty)e^{-\frac{4\beta K_2}{K_1}t}.
\end{align*}

\begin{remark}
Combining the results of this section with the dissipation of the logarithmic entropy in section 3.3 it seems natural that one can also conclude the exponential convergence to equilibrium in the case of the Fokker-Planck equation with both in- and outflow in the bulk and on the boundary, i.e. 
\begin{align}
	\partial_t \rho + \nabla \cdot (-\nabla \rho + \rho \nabla V)  = \tilde \alpha - \tilde \beta\rho  e^{-V} ,
\end{align}
 with boundary conditions \eqref{BoundaryCondition1Ain}-\eqref{BoundaryCondition1Aisolated}. However, this is not possible with the current techniques for reaction-diffusion equations that we used above, since they require that the stationary solution is of the form $\rho = c e^{-V}$, which is not the case with the flow boundary conditions unless $\alpha = \beta = 0$. 
\end{remark} 

%

\begin{remark}
	Replacing $\nabla V$ by a vector field $\vec{u}$ that is not necessarily the gradient of some vector field $V$ would be an interesting generalization of this model.
\end{remark}

\subsection{Numerical Solution}
The following results are again based on a finite difference scheme with explicit time discretization, see section \ref{sec:Numerik1A} for details.

\subsubsection{Time Evolution of the Density}
In Figure \ref{1BResultNumericalSolution} one can see the time evolution of the concentration $\rho$ solving \eqref{Fokker-Planck1B} in comparison to the calculated stationary solution $\rho_\infty = \frac{\alpha}{\beta}e^{V(x)}$ for $\alpha =1,~ \beta=0.9$ and initial concentration $\rho_0(x) = -0.1x +1.2$.
In contrast to the previous section there is no flow direction determined by the in- and outflux terms as particles enter the domain uniform in space.
\begin{figure}[h]
	\centering
    \includegraphics[width=0.40\textwidth]{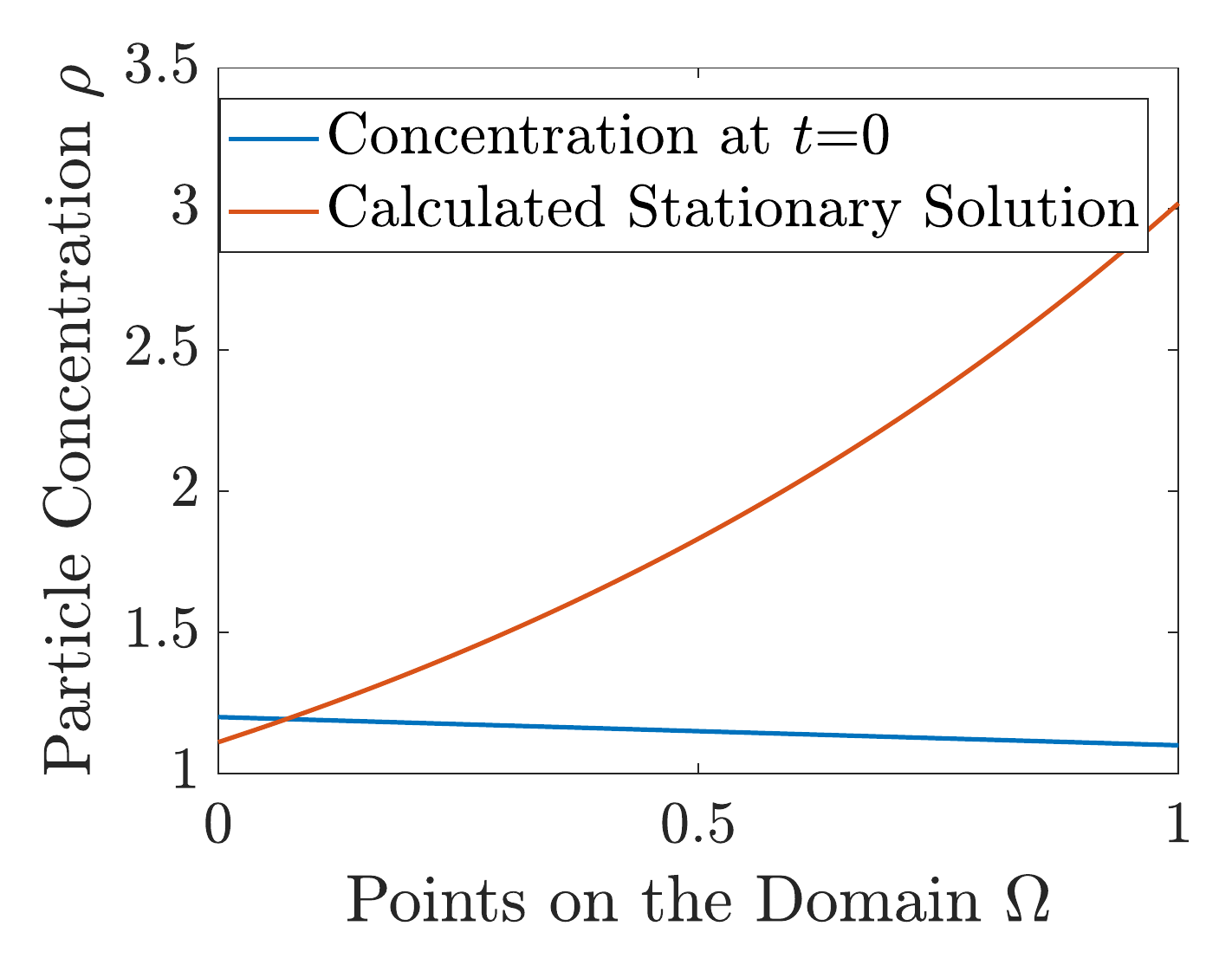}
    \includegraphics[width=0.40\textwidth]{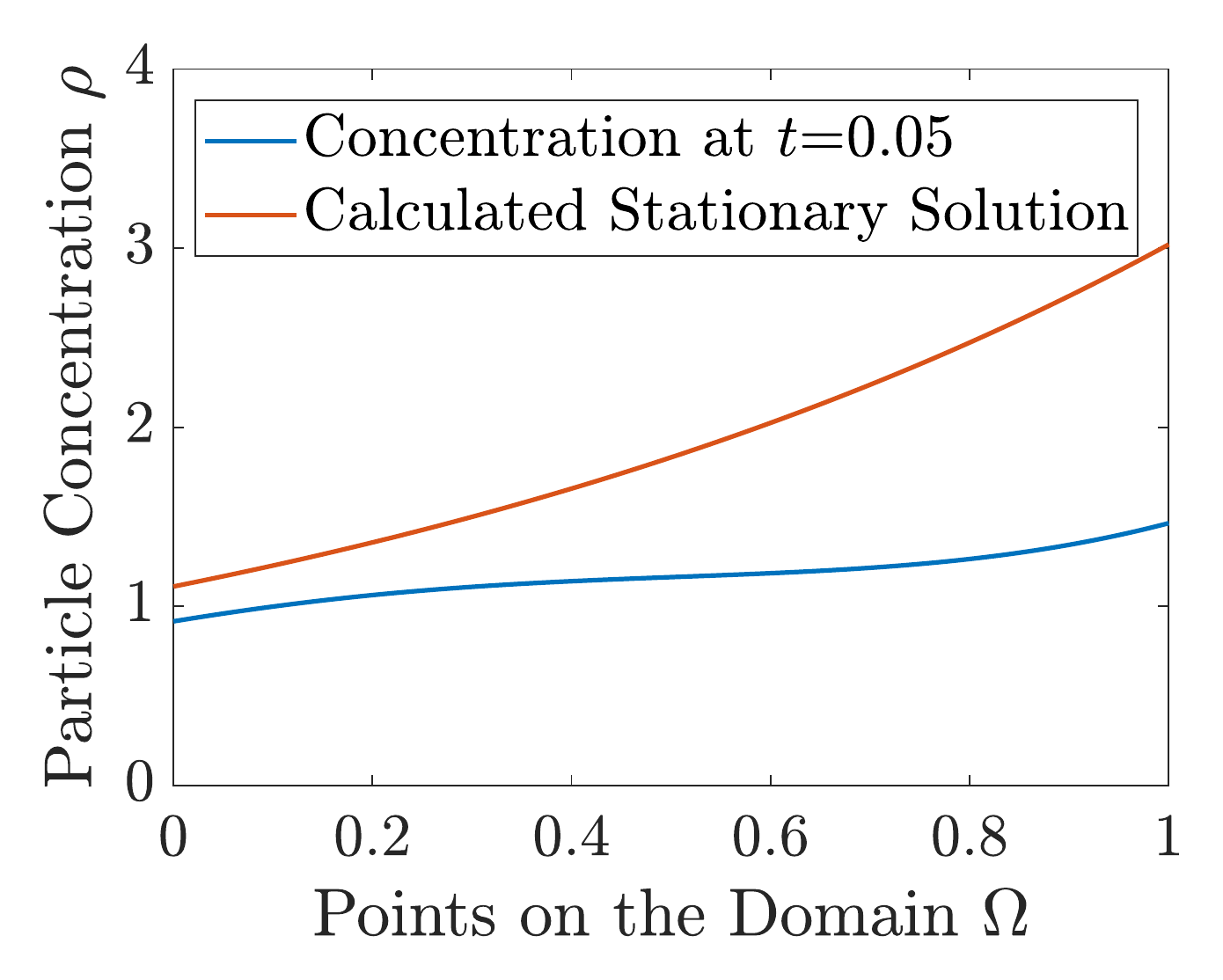}\\
    \includegraphics[width=0.40\textwidth]{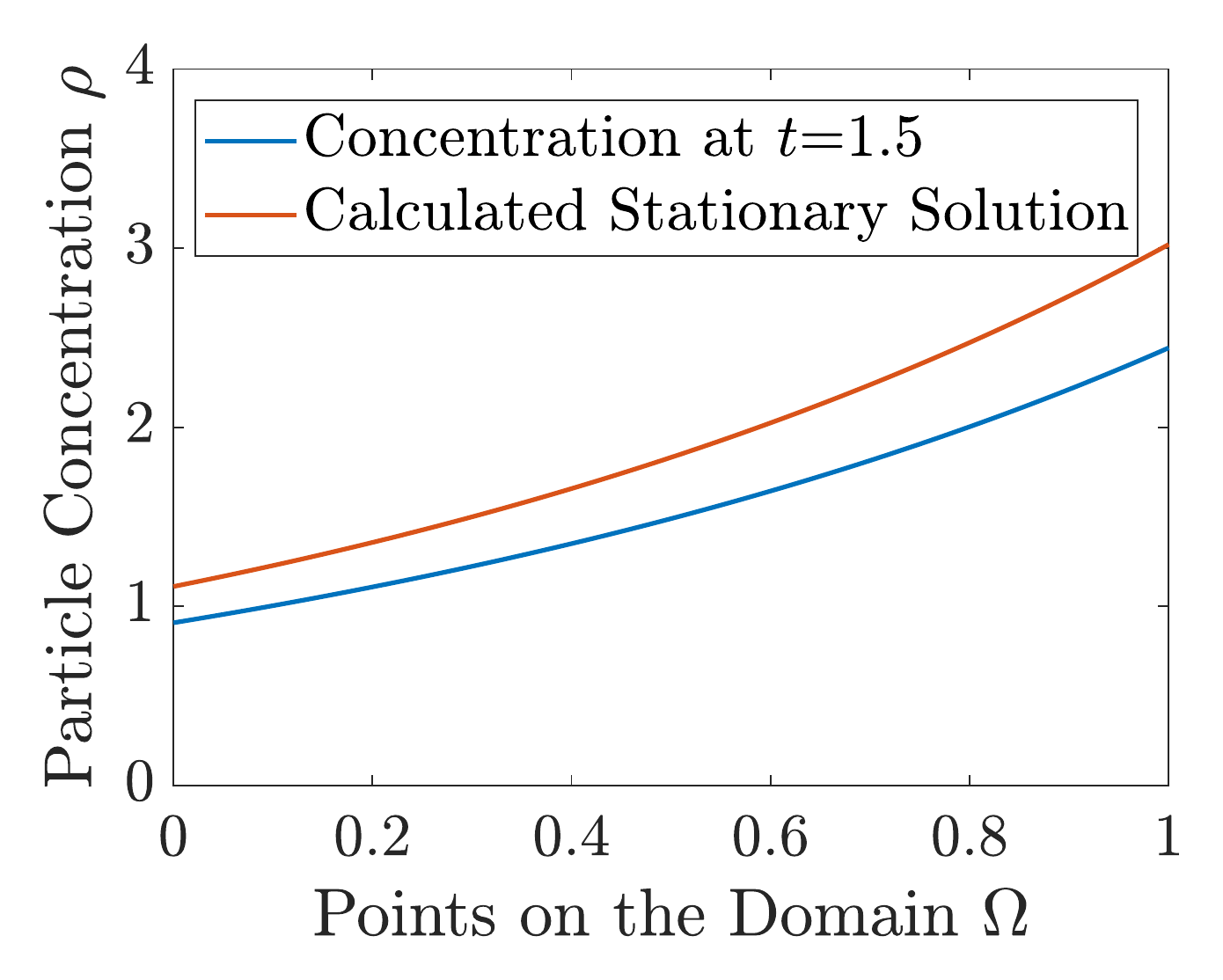}
    \includegraphics[width=0.40\textwidth]{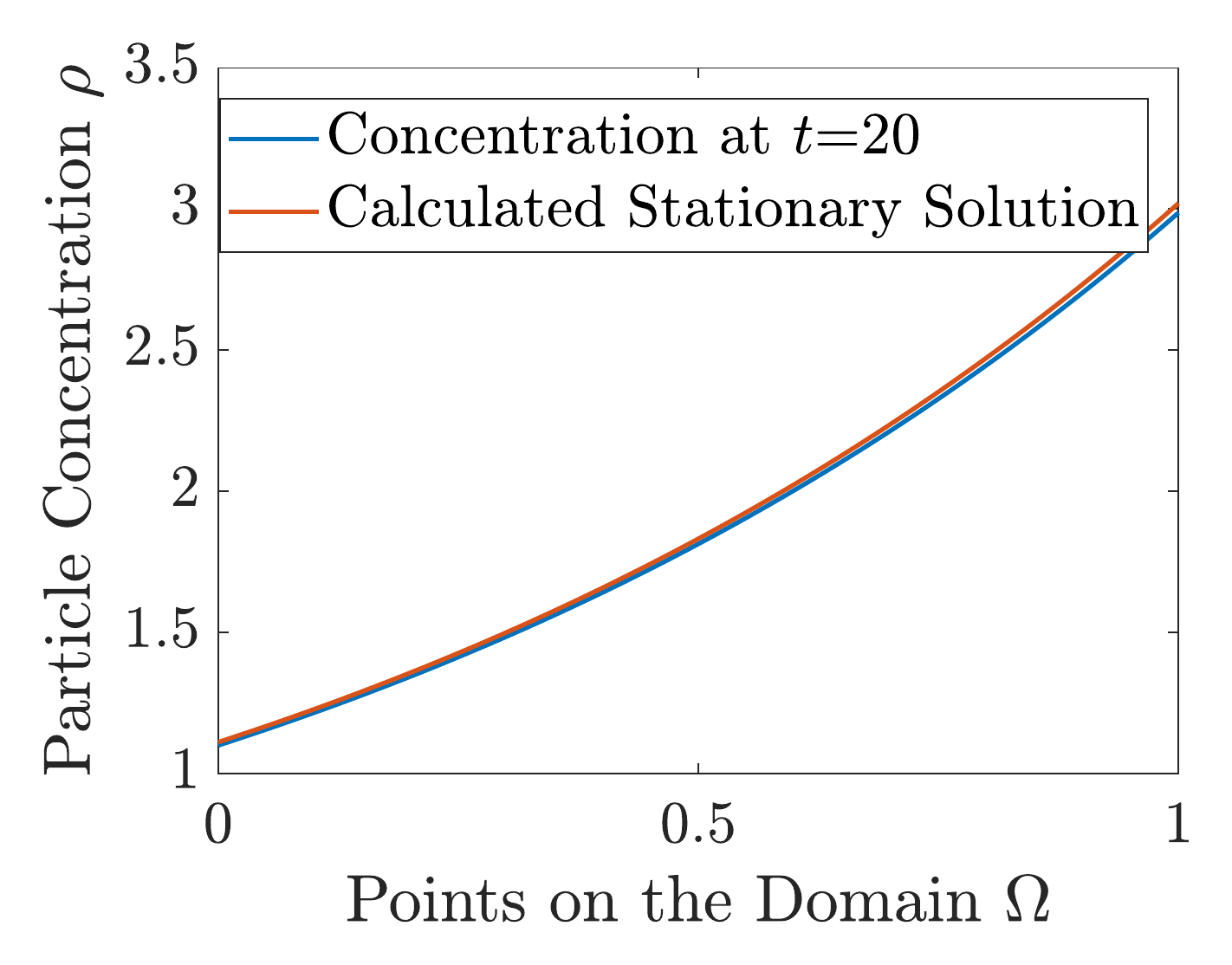}
	\caption{\textbf{Evolution over Time of the Particle Concentration:} Solution of \eqref{Fokker-Planck1B} in comparison to the calculated stationary solution $\frac{\alpha}{\beta}e^{V(x)}$ for $\alpha =1, \beta=0.9$, initial particle concentration $\rho(x) = -0.1x+1.2$. (a) The initial particle concentration at $t=0$, (b) + (c) particle concentration at $t=0.05$ and $t=1.5$, (d) equilibrium state at $t=20$.}
	\label{1BResultNumericalSolution}
\end{figure}

\subsubsection{Convergence Rates for the Relative Entropy}
The numerical solution of the logarithm of the relative entropy \eqref{1BRelativeEntropy} for the one dimensional version of \eqref{Fokker-Planck1B} for $\alpha =1, ~\beta=0.9$ and initial concentration $\rho(x) = -0.1x+1.2$ can be seen in Figure \eqref{1BPlotRelativeEntropyfunction}.
For the data given above the relative entropy has roughly the shape of $0,22e^{-1.04t}$ in the interval $t \in [0,15]$.
After that machine precision is reached.
Here we did not use the explicit stationary solution $\rho_\infty = \alpha/\beta e^x$ as the reference value in the relative entropy but we calculated the stationary solution numerically up to machine precision.

\begin{figure}[t]
	\centering
		\includegraphics[width=0.40\textwidth]{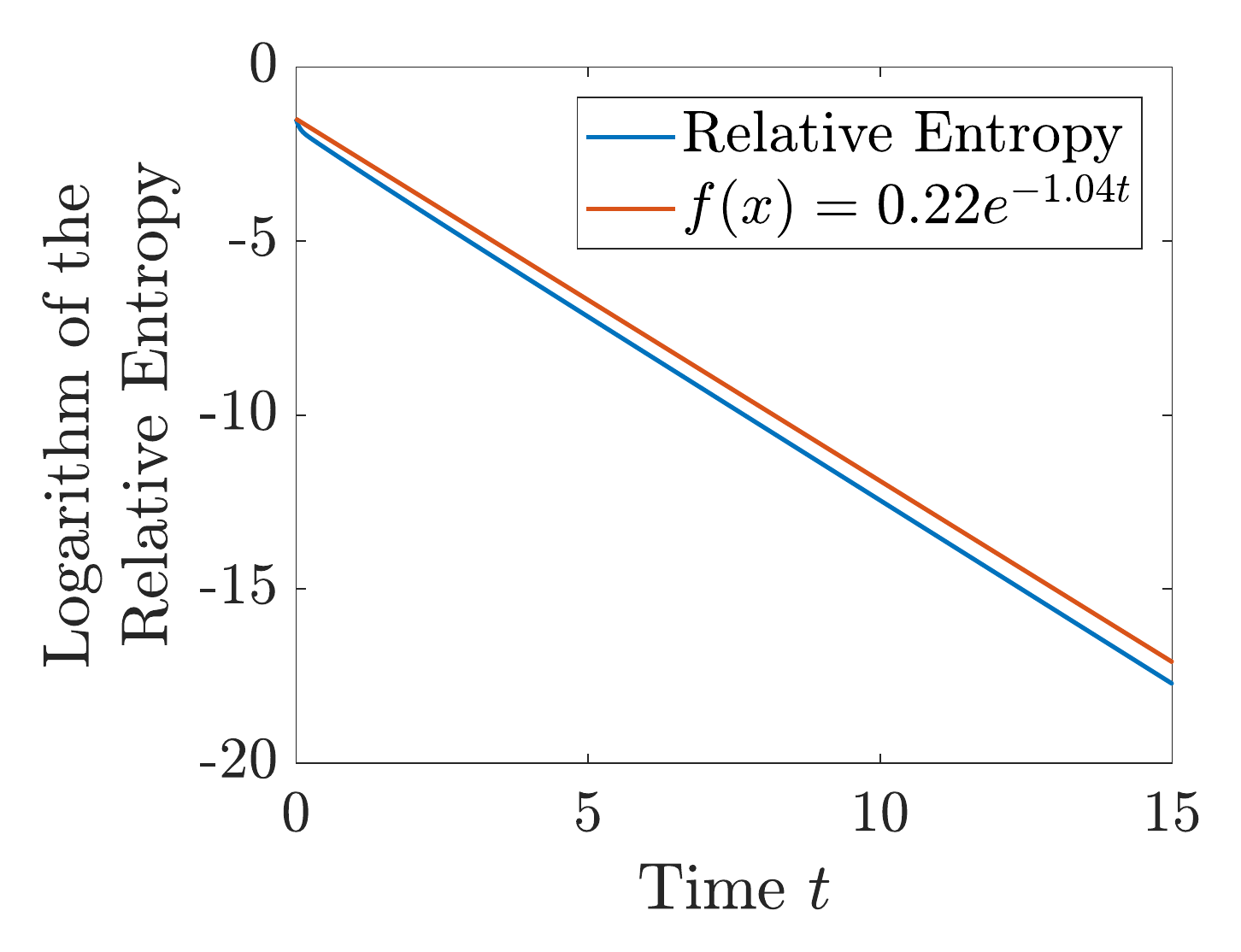} 
	\caption{\textbf{Relative Entropy:} Natural logarithm of the relative entropy \eqref{1BRelativeEntropy} for the one dimensional version of \eqref{Fokker-Planck1B} for $\alpha =1, \beta=0.9$, initial concentration $\rho(x) = 0.1x +1$.}
	\label{1BPlotRelativeEntropyfunction}
\end{figure}

\section{Nonlinear Model with Spatially Distributed In- and Outflux}
\label{sectionUniformStatialInandOutConstraint}

As vesicles have a positive volume, there naturally exists a maximal number that can fit into an axon. This motivates the next generalization of the Fokker-Planck equation given as 
\begin{align}
	\label{Fokker-Planck1C}
	\partial_t \rho + \nabla \cdot (-\nabla \rho + \rho(1-\rho) \nabla V) = \alpha ( 1- \rho) - \beta\rho e^{-V} \text{ on } \Omega \times (0,T)
\end{align}
with the same properties for $\rho, \alpha$ and $\beta$ as in the previous sections. The additional nonlinear term $\rho(1-\rho)$ and the modification of the inflow rate by $(1-\rho)$ ensure that the density stays within $[0,1]$ for all times, where $1$ corresponds to the scaled maximal density of vesicles.
Again we assume no flux boundary conditions $J \cdot n = 0$ on $\partial \Omega \times (0,T)$ and the following three assumptions:
\begin{itemize}
\item[(C1)] The domain $\Omega \subset \mathbb{R}^n$ is a connected and bounded with $\partial \Omega \in C^{1,1}$.
\item[(C2)] The initial condition satisfies $\rho_0 \in W^{2-2/p,p}(\Omega)$ for some fixed $2<p<3$ and the box constraints $0 < \rho_0 < 1$.
\item[(C3)] The potential $V$ is smooth and bounded with $\nabla V \in L^\infty (\Omega)$.
\end{itemize}
Again we want to show exponential decay to equilibrium which is surprisingly simple although in contrast to the previous settings \eqref{Fokker-Planck1C} is not a linear problem anymore. We chose the logarithmic entropy 
\begin{align*}
	E(\rho)&= \int_\Omega h(\rho)~dx \quad\text{ with } \quad h(\rho) = \rho \log(\rho) + (1-\rho)\log(1-\rho)+\rho V.
\end{align*}
for equation \eqref{Fokker-Planck1C} where $h(\rho)$ denotes the entropy density. We obtain the corresponding relative entropy
\begin{align}
	E(\rho|\rho_\infty) &= \int_\Omega \rho \log\Big(\frac{\rho}{\rho_\infty}\Big)
	+ (1-\rho) \log\Big( \frac{1-\rho}{1-\rho_\infty} \Big) ~ dx
	\label{1CRelativeEntropy}
\end{align}
motivated by viewing $\rho$ and $1-\rho$ as two different types of species. Our aim is the following result, which we will prove in section \ref{1CEquilibrium}:
\begin{theorem}\label{thm:exponential1C}
	Let (C1)-(C3) and (B4) hold. Then every weak solution to equation \eqref{Fokker-Planck1C} with no flux boundary conditions obeys the following exponential decay towards equilibrium:
	\begin{align*}
		\vert\vert \rho - \rho_\infty\vert\vert^2_{L^1(\Omega)} 
		\leq 2 E(\rho_0\vert\rho_\infty)e^{-\tilde{C}t},
	\end{align*}
	with $\tilde{C} = \alpha \min \lbrace 1 , \inf \frac{1-\rho_\infty}{\rho_\infty} \rbrace$.
\end{theorem}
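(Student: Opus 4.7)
The plan is to mimic the three-step entropy method used in the proof of Theorem~\ref{thm:ExponentialDecay1B}: establish an entropy-entropy-dissipation inequality $D(\rho\vert\rho_\infty) \geq \tilde{C}\, E(\rho\vert\rho_\infty)$, apply Gronwall's lemma to obtain exponential decay of the relative entropy, and convert to the $L^1$-norm via a Csisz\'{a}r--Kullback-type bound. The novelty here is only that the underlying operator is nonlinear; as pointed out in the text, the reaction dissipation will still do all the work, while the diffusive dissipation furnishes an extra non-negative term that one simply discards.

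First I would compute the entropy dissipation. Writing $F(s):=s\log s+(1-s)\log(1-s)$, so that $E(\rho\vert\rho_\infty)=\int_\Omega[F(\rho)-F(\rho_\infty)-F'(\rho_\infty)(\rho-\rho_\infty)]\,dx$, and introducing the chemical-potential difference
\[ \mu := F'(\rho)-F'(\rho_\infty) = \log\frac{\rho(1-\rho_\infty)}{(1-\rho)\rho_\infty}, \]
the equilibrium identity $\rho_\infty/(1-\rho_\infty)=(\alpha/\beta)e^V$ (which, together with the vanishing-flux condition for the equilibrium, forces $\alpha/\beta$ to be spatially constant) yields both
\[ J = -\nabla\rho + \rho(1-\rho)\nabla V = -\rho(1-\rho)\nabla\mu \quad \text{and} \quad \alpha(1-\rho)-\beta\rho e^{-V} = -\frac{\alpha}{\rho_\infty}(\rho-\rho_\infty). \]
Differentiating $E$ along the PDE and integrating by parts against the no-flux condition then gives
\[ D(\rho\vert\rho_\infty) = \int_\Omega \rho(1-\rho)\vert\nabla\mu\vert^2\,dx + \int_\Omega \frac{\alpha}{\rho_\infty}(\rho-\rho_\infty)\mu\,dx, \]
with both contributions non-negative, the second because $\mu$ and $\rho-\rho_\infty$ have the same sign by monotonicity of $F'$.

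Next I would bound $E$ by the reaction part of $D$. For any convex $F$, the Taylor-remainder representation of both sides yields the pointwise Bregman bound
\[ F(\rho)-F(\rho_\infty)-F'(\rho_\infty)(\rho-\rho_\infty) \leq (\rho-\rho_\infty)\bigl(F'(\rho)-F'(\rho_\infty)\bigr) = (\rho-\rho_\infty)\mu, \]
so that $E(\rho\vert\rho_\infty) \leq \int_\Omega (\rho-\rho_\infty)\mu\,dx$. The elementary identity $1/\rho_\infty = 1 + (1-\rho_\infty)/\rho_\infty$ gives $\alpha/\rho_\infty \geq \alpha\min\{1,(1-\rho_\infty)/\rho_\infty\} \geq \tilde{C}$ pointwise, so discarding the diffusive term in $D$ produces the desired entropy-dissipation inequality $D(\rho\vert\rho_\infty) \geq \tilde{C}\, E(\rho\vert\rho_\infty)$.

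Gronwall's lemma then turns this into $E(\rho(t)\vert\rho_\infty) \leq E(\rho_0\vert\rho_\infty)e^{-\tilde{C}t}$, and the passage to the $L^1$-norm follows from the two-state Pinsker inequality $p\log(p/q)+(1-p)\log((1-p)/(1-q)) \geq 2(p-q)^2$ applied pointwise, followed by Cauchy--Schwarz (the geometric constant $\vert\Omega\vert$ being absorbed into the prefactor $2$). The main technical hurdle is that $\mu$ is singular at $\rho\in\{0,1\}$, so the formal manipulations above require the strict bounds $0<\rho<1$ to propagate in time from assumption (C2); this should follow from a comparison argument in the spirit of Lemma~\ref{Grenzefuer1B} adapted to the degenerate flux $-\rho(1-\rho)\nabla\mu$, or alternatively by regularizing $\rho_0$ away from $\{0,1\}$ and passing to the limit using the lower semicontinuity of $E$.
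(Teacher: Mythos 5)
Your proof is correct and arrives at the stated rate, but the key entropy--entropy-dissipation step uses a genuinely different decomposition from the paper's. Both arguments discard the diffusive part of the dissipation and work only with the reaction contribution $\int_\Omega \frac{\alpha}{\rho_\infty}(\rho-\rho_\infty)\,\mu\,dx$; the paper then expands this integrand into four building blocks of the form $e(a|b)=a\log\frac{a}{b}-a+b$ plus a quadratic remainder $2\int_\Omega\alpha\frac{(\rho-\rho_\infty)^2}{\rho_\infty}\,dx$, identifies $\tilde C E(\rho|\rho_\infty)+\hat C E(\rho_\infty|\rho)$ among the pieces, and drops the nonnegative leftovers. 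You instead invoke the one-line symmetrized-Bregman bound $F(\rho)-F(\rho_\infty)-F'(\rho_\infty)(\rho-\rho_\infty)\leq(\rho-\rho_\infty)\mu$ together with $\frac{1}{\rho_\infty}=1+\frac{1-\rho_\infty}{\rho_\infty}\geq\min\lbrace 1,\inf\frac{1-\rho_\infty}{\rho_\infty}\rbrace$. This is shorter and in fact sharper: it yields the rate $\alpha\inf\frac{1}{\rho_\infty}=\alpha\bigl(1+\inf\frac{1-\rho_\infty}{\rho_\infty}\bigr)$, which dominates the paper's $\tilde C$, and you then weaken it to match the statement. You are also more explicit on two points the paper passes over silently: that $\alpha/\beta$ must be spatially constant for \eqref{1CStationarySolution} to be a zero-flux equilibrium (needed to write $J=-\rho(1-\rho)\nabla\mu$), and that the prefactor $2$ in the $L^1$ estimate carries an $\vert\Omega\vert$-dependence through the pointwise Pinsker inequality and Cauchy--Schwarz, whereas the paper cites Lemma \ref{CsiszarKullbacknotprobabilitymeasures} and writes $2$ without comment. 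The one place where the paper's route is structurally safer is the singularity of $\mu$ at $\rho\in\lbrace 0,1\rbrace$: rather than propagating strict bounds $0<\rho<1$ as you propose, the paper obtains the dissipation relation as an inequality inherited from the implicit-Euler scheme (the relative-entropy analogue of \eqref{eq:1CExistenceEntropyDissipation}), so no pointwise strict bounds are needed; your regularization alternative would also work.
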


\subsection{Existence of the Time Dependent Problem}

\begin{definition}
	We say that a function $\rho \in L^2(0,T;H^1(\Omega))$ with $\partial_t \rho \in L^2(0,T;H^{-1}(\Omega))$ is a \textit{weak solution} to equation \eqref{Fokker-Planck1C}, supplemented with the boundary condition $J\cdot n=0$ on $\partial\Omega$, if the identity
	\begin{align}
		\label{1CWeakSolution}
		\int_\Omega \partial_t \rho \, \phi \, dx
		- \int_\Omega \big(-\nabla \rho + \rho(1-\rho) \nabla V\big)\cdot \nabla \phi dx 
		+ \int_\Omega \beta \rho e^V \phi dx
		- \int_\Omega \alpha (1-\rho) \phi dx
		= 0
	\end{align}
	holds for all $\phi \in H^1(\Omega)$ and a.e. $0 \leq t \leq T$.
\end{definition}

\begin{lemma}
	Let (C1)-(C3) and (B4) hold, then for every $\rho_0 \in L^1(\Omega)$ with $0 \le \rho_0 \le 1$, there exists a weak solution to equation \eqref{Fokker-Planck1C} in the sense of \eqref{1CWeakSolution} which satisfies $0 \le \rho \le 1$ a.e. in $\Omega$.
	With 
	$$u = h'(\rho) = \log \rho - \log( 1- \rho) - V$$
	being the entropy variable, this solution fulfills the dissipation inequality
	\begin{align} \label{eq:1CExistenceEntropyDissipation}
		\begin{split}
		E(\rho)
		+ \int_0^T \Big(\int_\Omega \rho(1-\rho) \vert \nabla  u \vert^2 ~ dx
		-  \int_\Omega \alpha (1-\rho) u ~ dx  +  \int_\Omega \beta \rho e^{-V} u ~ dx \Big)\;ds
        \leq E(\rho_0).
		\end{split}
	\end{align}		
\end{lemma}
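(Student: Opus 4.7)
The plan is to construct a weak solution via a regularized problem whose approximate solutions stay strictly inside $(0,1)$, derive the entropy identity at the regularized level where every manipulation is classical, and then pass to the limit using weak compactness and lower semicontinuity. First I would replace the mobility $f(\rho)=\rho(1-\rho)$ by a globally Lipschitz, bounded extension $f_\epsilon$ with $f_\epsilon=f$ on $[0,1]$ and $f_\epsilon\equiv 0$ outside $[-\epsilon,1+\epsilon]$, and truncate the reaction right-hand side to $\alpha(1-\rho)_+-\beta\rho_+ e^{-V}$. The resulting semilinear parabolic equation has bounded Lipschitz nonlinearities, so standard $L^p$-maximal regularity theory (e.g.\ Amann, Ladyzhenskaya--Solonnikov--Ural'tseva) yields a unique strong solution $\rho_\epsilon\in L^p(0,T;W^{2,p}(\Omega))\cap W^{1,p}(0,T;L^p(\Omega))$ for the given $p\in(2,3)$, compatible with the prescribed initial datum via (C2).

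Next I would establish that the box constraints $0\le\rho_\epsilon\le 1$ are in fact preserved. Testing the regularized equation with $(\rho_\epsilon-1)_+$ and using that $f_\epsilon(\rho_\epsilon)=0$ and $\alpha(1-\rho_\epsilon)_+=0$ on $\{\rho_\epsilon>1\}$, while the sink contribution $-\beta\rho_\epsilon e^{-V}(\rho_\epsilon-1)_+$ is non-positive, a Gronwall estimate combined with $\rho_0\le 1$ yields $(\rho_\epsilon-1)_+\equiv 0$; the analogous argument with the negative part gives $\rho_\epsilon\ge 0$. Once these bounds hold, the regularized and the original equations coincide. I would then derive the entropy identity by testing the equation with a truncation $h'(\rho_\epsilon^{\delta})$, where $\rho_\epsilon^{\delta}=\max\{\delta,\min\{1-\delta,\rho_\epsilon\}\}$, to avoid the singularity of $\log\rho-\log(1-\rho)$, carrying out the integration by parts using the identity $f(\rho_\epsilon)\nabla u_\epsilon=\nabla\rho_\epsilon-f(\rho_\epsilon)\nabla V$, and then letting $\delta\to 0$ by dominated convergence (the integrands $\rho\log\rho$ and $(1-\rho)\log(1-\rho)$ are bounded on $[0,1]$).

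For the passage $\epsilon\to 0$, the entropy identity together with the $L^\infty$ bound yields uniform control on $\nabla\rho_\epsilon$ in $L^2(\Omega\times(0,T))$, and hence via the equation on $\partial_t\rho_\epsilon$ in $L^2(0,T;H^{-1}(\Omega))$. Aubin--Lions compactness gives strong convergence $\rho_\epsilon\to\rho$ in $L^2$ and pointwise a.e., which suffices to pass to the limit in the nonlinear terms of the weak formulation; the limit $\rho$ inherits the box constraints. For the dissipation term, I would rewrite $f(\rho_\epsilon)|\nabla u_\epsilon|^2=|J_\epsilon|^2/f(\rho_\epsilon)$ with $J_\epsilon=-\nabla\rho_\epsilon+f(\rho_\epsilon)\nabla V$, and invoke the joint convexity and weak lower semicontinuity of the functional $(\rho,J)\mapsto\int|J|^2/f(\rho)$ (with the usual conventions $0/0=0$, $a/0=+\infty$) to pass to the limit with an inequality. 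The linear terms in $u$ are treated by dominated convergence after noting that $(1-\rho)u$ and $\rho u$ remain integrable thanks to the box constraints, since $s\log s$ and $(1-s)\log(1-s)$ are bounded on $[0,1]$.

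The main technical obstacle is the degeneracy of the mobility $f(\rho)=\rho(1-\rho)$ at the two boundary values, precisely where the entropy variable $u$ becomes singular. This is what forces the double regularization (in the equation and in the test function) and the flux-based convex reformulation of the Fisher-information-type term; it is also the reason why \eqref{eq:1CExistenceEntropyDissipation} is stated as an inequality rather than an equality, since weak lower semicontinuity only delivers ``$\le$'' in the limit.
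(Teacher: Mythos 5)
Your construction is viable, but it takes a genuinely different route from the paper. The paper discretizes in time (implicit Euler), solves each step as a nonlinear elliptic problem in the \emph{entropy variable} via a fixed-point argument, and obtains the box constraints for free from the substitution $\rho_{k+1}=h'^{-1}(u_{k+1})$ (``boundedness by entropy''), which also makes $u_k\in H^1(\Omega)$ an admissible test function so that the discrete entropy dissipation inequality is immediate from convexity of $h$; the limit $\tau\to 0$ is then handled by the Gomes--Wolfram gradient bound and the Aubin--Lions lemma for piecewise-constant interpolants. You instead regularize the mobility and the reactions in $\rho$-space, invoke parabolic maximal regularity, recover $0\le\rho\le 1$ by testing with $(\rho-1)_+$ and $\rho_-$, and obtain the entropy inequality by testing with a truncated $h'(\rho^\delta)$ and using convexity/lower semicontinuity of $(\rho,J)\mapsto\int |J|^2/f(\rho)$. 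Your route buys strong solutions and a transparent maximum principle; the paper's route buys the strict bounds $0<\rho_k<1$ and integrability of $u_k$ without any truncation argument, at the price of working with a time-discrete scheme. Note also that once your maximum principle is established, $\rho_\epsilon$ already solves the \emph{original} equation, so your final limit $\epsilon\to 0$ is essentially vacuous rather than a genuine compactness step.

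Two points need repair. First, your justification of the linear-in-$u$ terms by dominated convergence is wrong as stated: $\rho u$ contains $-\rho\log(1-\rho)$ and $(1-\rho)u$ contains $(1-\rho)\log\rho$, which are \emph{not} of the form $s\log s$ or $(1-s)\log(1-s)$ and blow up as $\rho\to 1$ and $\rho\to 0$ respectively; the box constraints alone do not give integrability. What saves you is that both singular contributions enter the left-hand side of \eqref{eq:1CExistenceEntropyDissipation} with a nonnegative sign (as does the Fisher-information term), so monotone convergence/Fatou in $\delta$ yields the inequality with a possibly infinite left-hand side, and finiteness of $E(\rho_0)$ then forces integrability a posteriori. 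Second, your claim that $f_\epsilon(\rho_\epsilon)=0$ on $\{\rho_\epsilon>1\}$ requires $f_\epsilon$ to vanish on all of $(1,\infty)$, not merely outside $[-\epsilon,1+\epsilon]$; this is easily arranged since $f(0)=f(1)=0$ lets you extend $f$ by zero to a globally Lipschitz function, but as written your cut-off leaves a strip $(1,1+\epsilon]$ on which the drift term in the $(\rho_\epsilon-1)_+$ test does not obviously vanish.
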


\begin{proof}
	This proof is based one implicit Euler discretization, following e.g. \cite{BurgerPietschmannFlowCharacteristics,GomesWolframParameterEstimation}, to which we refer for more details. We start by rewriting \eqref{Fokker-Planck1C}, using the entropy variable $u$ and exploiting its formal gradient flow structure, as 
	$$
	\partial_t \rho + \nabla \cdot (\rho(1-\rho) \nabla u) - \alpha ( 1- \rho) + \beta\rho e^{-V} = 0.
	$$
	Now fix $N \in \N$ and consider a discretization of $(0,T]$ into subintervals $(0,T] = \cup_{k=1}^{N}[(k-1)\tau, k\tau]$ with time steps $\tau = \frac{T}{N}$, we obtain the following sequence of elliptic problems
	\begin{align}
		\label{1CExistenceEulerDis}
		0 = \frac{\rho_k - \rho_{k+1}}{\tau} - \nabla \cdot (\rho_{k+1}(1-\rho_{k+1})\nabla u_{k+1}) - \alpha ( 1- \rho_{k+1}) + \beta\rho_{k+1} e^{-V}.
	\end{align}	
	The existence of a solution $\rho_{k+1}$ (given $\rho_k$) to the nonlinear equation \eqref{1CExistenceEulerDis} can be proven via a fixed point argument, see \cite[Theorem 3.5]{BurgerPietschmannFlowCharacteristics}. In particular, using the transformation $\rho_{k+1} = h'^{-1}(u_{k+1})$ enforces the bounds $0 \le \rho_{k+1} \le 1$ (sometimes called boundedness by entropy).

	In order to be able to pass to the limit $\tau \to 0$, we use the discrete entropy dissipation to get a priori bounds. As the entropy density $h(\rho) = \rho \log \rho - (1-\rho) \log(1-\rho) - \rho V$ 	is strictly convex for $\rho \in S^0$, with $S^0$ being the interior of $S = \lbrace \rho \in \R ~ \vert ~ 0 \leq \rho \leq 1\rbrace$ we obtain
	\begin{align*}
		h(\rho_k) - h(\rho_{k-1}) \leq h'(\rho_k) ( \rho_k - \rho_{k-1}).
	\end{align*}
	Now taking $\phi = u_k \in H^1(\Omega)$ as test function in \eqref{1CExistenceEulerDis}, and using $\rho_k = h'^{-1}(u_k)$, we obtain the discrete entropy dissipation given as 
	\begin{align} \label{eq:1CExistenceRecursion}
		\begin{split}
		\int_\Omega h(\rho_k) ~ dx 
		+ \tau \int_\Omega \rho_k(1-\rho_k) \vert \nabla  u_k \vert^2 ~ dx
		-  \tau \int_\Omega \alpha (1-\rho_k) u_k ~ dx  \\
		+  \tau \int_\Omega \beta \rho_k e^{-V} u_k ~ dx
		\leq \int_\Omega h(\rho_{k-1}) ~ dx.
		\end{split}
	\end{align}
	Solving the recursion then yields
	\begin{align} \label{eq:1CExistenceRecursionDissolved}
		\begin{split}
		\int_\Omega h(\rho_k) ~ dx 
		+ \tau \sum_{j=1}^{k} \Big(\int_\Omega \rho_k(1-\rho_k) \vert \nabla  u_k \vert^2 ~ dx
		-  \int_\Omega \alpha (1-\rho_k) u_k ~ dx  \\
		+  \int_\Omega \beta \rho_k e^{-V} u_k ~ dx \Big)
		\leq \int_\Omega h(\rho_0) ~ dx.
		\end{split}
	\end{align}	
	To pass to the limit $\tau \rightarrow 0$ we denote by $\rho_k$ a sequence of solutions to \eqref{eq:1CExistenceRecursion}. 	We define $\rho_\tau(x,t) = \rho_k(x)$ for $x \in \Omega$ and $t \in ((k-1)\tau, k \tau)]$. 
	Then for $\tau \leq t \leq T$, the function $\rho_\tau$ solves the following problem
	\begin{align} \label{eq:1CExistenceInequality}
		\begin{split}
		&\int_0^T \int_\Omega \Big( \frac{1}{\tau} (\rho_\tau - \sigma\tau \rho_\tau)\phi + \rho_\tau(1-\rho_\tau) \nabla u_\tau \cdot \nabla \phi ~ dx  \\
		&\qquad -  \int_0^T \int_\Omega \alpha (1-\rho_\tau)  \phi ~ dx
		+  \int_0^T \int_\Omega \beta \rho_\tau \phi ~dx = 0,
		\end{split}	
	\end{align}
	where $\sigma_\tau$ denotes the shift operator, that is $(\sigma_\tau \rho_\tau)(x,t) = \rho_\tau (x, t- \tau)$ and for all test functions $\phi \in L^2(0,T;H^1(\Omega))$. Next the entropy dissipation inequality \eqref{eq:1CExistenceRecursionDissolved} becomes
	\begin{align}\label{eq:EntropyDissipationPWCont}
		\begin{split}
		\int_\Omega h(\rho_\tau(T)) ~ dx 
		+ \int_0^T\Big(\int_\Omega \rho_\tau(1-\rho_\tau) \vert \nabla  u_\tau \vert^2 ~ dx
		-  \int_\Omega \alpha (1-\rho_\tau) u_\tau ~ dx  \\
		+  \int_\Omega \beta \rho_\tau e^{-V} u_\tau ~ dx \Big)
		\leq \int_\Omega h(\rho_0) ~ dx.
		\end{split}
	\end{align}
	Following \cite[Appendix, Lemma 1]{GomesWolframParameterEstimation}, 
	there exists a constant $C$ such that
	\begin{align*}
		\int_\Omega \rho(1-\rho) \vert \nabla  u_k \vert^2 ~ dx 
		- \int_\Omega \alpha (1-\rho) u ~ dx
		+ \int_\Omega \beta \rho e^{-V} u ~ dx
		\geq 2 \int_\Omega \vert \nabla \rho\vert^2 ~ dx - C,
	\end{align*}
	which gives the a-priori estimate $\Vert \rho_\tau \Vert_{L^2(0,T;H^1(\Omega))} \leq K$ when combined with \eqref{eq:EntropyDissipationPWCont}. Thus, upon extraction of a subsequence, $\rho_\tau$ converges strongly in $L^2(\Omega)$.
Together with the weak convergence of $\nabla u_\tau$, this is enough to pass to the limit in \eqref{eq:1CExistenceRecursionDissolved} in all terms but the first one. There we have to apply a special version of the Aubin-Lions lemma for piece-wise constant interpolations \cite[Thm 1]{Dreher20123072} which allows us to take $\tau \to 0$. Finally, taking the limit in \eqref{eq:EntropyDissipationPWCont} yields the desired entropy dissipation inequality.
\end{proof}

\subsection{Stationary Solution}

\begin{lemma}
	There exists exactly one stationary solution $\rho_\infty \in H^1(\Omega)$ of equation \eqref{Fokker-Planck1C} with no flux boundary conditions given by
	\begin{align}
	\rho_\infty &= \frac{\frac{\alpha}{\beta}e^{V}}{1+\frac{\alpha}{\beta}e^{V}} \in [0,1].
	\label{1CStationarySolution} 
	\end{align}
\end{lemma}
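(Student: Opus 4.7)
First I would verify by direct calculation that the displayed formula $\rho_\infty = \frac{(\alpha/\beta) e^V}{1+(\alpha/\beta) e^V}$ is indeed a stationary solution. A short computation gives
\[
\nabla \rho_\infty = \frac{(\alpha/\beta) e^V}{(1+(\alpha/\beta) e^V)^2} \nabla V = \rho_\infty(1-\rho_\infty) \nabla V,
\]
so the equilibrium flux $J_\infty = -\nabla \rho_\infty + \rho_\infty(1-\rho_\infty) \nabla V$ vanishes pointwise, trivially satisfying the no-flux boundary condition and making the divergence term disappear. The reaction terms also cancel: the identity $\rho_\infty/(1-\rho_\infty) = (\alpha/\beta) e^V$ rearranges to $\alpha(1-\rho_\infty) = \beta \rho_\infty e^{-V}$. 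Regularity $\rho_\infty \in H^1(\Omega)$ (actually $\rho_\infty \in W^{1,\infty}(\Omega)$ with $0 < \rho_\infty < 1$) follows from $V \in W^{1,\infty}(\Omega)$ together with $1 + (\alpha/\beta) e^V \geq 1$.

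For uniqueness I would reuse the entropy variable $u = h'(\rho) = \log \rho - \log(1-\rho) - V$. The key observation is that at equilibrium this reduces to the constant $u_\infty = \log(\alpha/\beta)$, and the reaction term admits the linear factorization
\[
\alpha(1-\rho) - \beta \rho e^{-V} = -(\alpha + \beta e^{-V}) (\rho - \rho_\infty),
\]
which is verified by matching values at $\rho=\rho_\infty$ and the coefficient of $\rho$. Rewriting the flux as $J = -\rho(1-\rho)\nabla u$, any stationary solution satisfies $-\nabla \cdot(\rho(1-\rho)\nabla u) = \alpha(1-\rho) - \beta \rho e^{-V}$. Testing against $u - u_\infty$, using $\nabla(u-u_\infty) = \nabla u$, the no-flux boundary condition, and inserting the factorization, yields
\[
\int_\Omega \rho(1-\rho) |\nabla u|^2 \, dx + \int_\Omega (\alpha + \beta e^{-V})(\rho - \rho_\infty)(u - u_\infty) \, dx = 0.
\]
Both integrands are pointwise non-negative: the first because $0 \le \rho \le 1$, the second because strict convexity of $h$ forces $(\rho-\rho_\infty)(h'(\rho)-h'(\rho_\infty)) \ge 0$, with strict inequality whenever $\rho \ne \rho_\infty$. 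Since $\alpha + \beta e^{-V} \ge \alpha_0 > 0$, vanishing of the second integral already forces $\rho = \rho_\infty$ a.e.

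The main technical obstacle is to justify $u - u_\infty$ as a test function: this requires strict two-sided bounds $0 < \rho < 1$ on any stationary weak solution (so that $u$ is finite) together with $u \in H^1(\Omega)$. The box constraint $0 \le \rho \le 1$ can be obtained by testing the stationary equation against $\rho_-$ and $(\rho-1)_+$ in the spirit of Lemma~\ref{Grenzefuer1B}, and strict positivity/strict subunity then follow from a strong maximum principle argument analogous to the one used in Lemma~\ref{stationarySolutionBounded}. If a more delicate regularity argument is needed, one can first work with a truncated entropy variable $u_\varepsilon = \log(\rho+\varepsilon) - \log(1-\rho+\varepsilon) - V$, which preserves the same sign structure, and then pass to the limit $\varepsilon \to 0$ using the uniform a priori bounds.
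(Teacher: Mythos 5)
Your argument is correct in substance but takes a genuinely different route from the paper. The paper does not verify the formula at all and obtains uniqueness as a corollary of Theorem \ref{thm:exponential1C}: a second stationary solution $\tilde\rho_\infty$, viewed as a time-independent solution of the evolution problem, would satisfy $E(\tilde\rho_\infty\vert\rho_\infty)\le E(\tilde\rho_\infty\vert\rho_\infty)e^{-\tilde C t}$, which forces the relative entropy to vanish. That is a one-line argument, but it presupposes that every stationary solution is covered by the decay theorem (i.e.\ generates a constant-in-time weak solution with finite initial relative entropy), and it is logically downstream of the entire long-time analysis of section \ref{1CEquilibrium}. Your proof is the stationary counterpart of the same entropy-dissipation identity: testing $-\nabla\cdot(\rho(1-\rho)\nabla u)=\alpha(1-\rho)-\beta\rho e^{-V}$ with $u-u_\infty$ and using the factorization $\alpha(1-\rho)-\beta\rho e^{-V}=-(\alpha+\beta e^{-V})(\rho-\rho_\infty)$ together with the monotonicity of $h'$ gives a self-contained argument that does not rely on the time-dependent theory. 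What it costs you is exactly the technical point you flag: $u-u_\infty$ is an admissible $H^1$ test function only once you have strict bounds $0<\rho<1$ on stationary solutions, and your truncation $u_\varepsilon$ is the right way to make this rigorous (the paper sidesteps this entirely by working with the relative entropy, which stays finite even where $\rho$ touches $0$ or $1$). One caveat applies to both proofs and to the lemma as stated: with (B4) allowing spatially varying $\alpha,\beta$, the displayed $\rho_\infty$ annihilates the reaction term but its flux $J_\infty$ vanishes only if $\alpha/\beta$ is constant (equivalently, only then is $u_\infty=\log(\alpha/\beta)$ constant, which your first integral $\int\rho(1-\rho)\nabla u\cdot\nabla(u-u_\infty)\,dx\ge 0$ also requires); so both arguments implicitly assume constant $\alpha/\beta$, and your explicit verification makes this hidden hypothesis visible, which is a useful byproduct.
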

\begin{proof}
    Uniqueness of the stationary solution is a direct consequence of Theorem \ref{thm:exponential1C}. Indeed, assuming that there are two different stationary solutions $\rho_{\infty}$ and $\tilde \rho_{\infty}$, inserting them into \eqref{1CGronwall} yields
	\begin{align*}
		E(\rho_{\infty}\vert \tilde \rho_{\infty} ) \leq e^{-\tilde Ct} E(\rho_{\infty}\vert \tilde \rho_{\infty} ).
	\end{align*}
	As the left hand side of the inequality is a constant whereas the right side is a decreasing function in $t$, we obtain a contradiction for $t$ large enough.
\end{proof}
%

\subsection{Long Time Behaviour} 
\label{1CEquilibrium}

First we rewrite the reactions terms in equation \eqref{Fokker-Planck1C} as
\begin{align*}
	\alpha \big(1-\rho(1+\frac{\beta}{\alpha}e^{-V})\big) 
	= \alpha (1-\frac{\rho}{\rho_\infty}) = \alpha \big((1-\rho)-\frac{\rho}{\rho_\infty}(1-\rho_\infty)\big).
\end{align*}
Next using the analogue of \eqref{eq:1CExistenceEntropyDissipation} for the relative entropy, we see that the entropy dissipation is given by
\begin{align*}
D(\rho\vert\rho_\infty) &= \int_\Omega \rho(1-\rho)|\nabla e'(\rho|\rho_\infty)|^2 
\\
&\qquad + \alpha \big((1-\rho)-\frac{\rho}{\rho_\infty}(1-\rho_\infty)\big) \big( \log\big(\frac{\rho}{\rho_\infty}\big) - \log\big(\frac{1-\rho}{1-\rho_\infty}\big) \big)\;dx.
\end{align*}
Neglecting the first non-negative part and introducing the function
$$
e(a|b) = a\log\frac{a}{b} -a +b,
$$ 
we can further estimate the dissipation from below by 
\begin{align*}
	D(\rho\vert\rho_\infty) 
	&\geq - \int_\Omega \alpha \big((1-\rho)-\frac{\rho}{\rho_\infty}(1-\rho_\infty)\big) \big( \log\big(\frac{\rho}{\rho_\infty}\big) - \log\big(\frac{1-\rho}{1-\rho_\infty}\big) \big) ~ dx 
	\\
	&= \int_\Omega \alpha \Big(~ \frac{1-\rho}{\rho_\infty} ~ \rho_\infty \log \big( \frac{\rho_\infty}{\rho})
	+ (1- \rho) \log \big( \frac{1-\rho}{1- \rho_\infty}\big) 
	\\
	&\qquad\qquad
	+ \frac{1-\rho_\infty}{\rho_\infty} ~ \rho \log\big(\frac{\rho}{\rho_\infty}\big) 
	+ \frac{\rho}{\rho_\infty}(1-\rho_\infty)\log\big( \frac{1-\rho_\infty}{1-\rho}\big)  \Big) ~ dx 
	\\
	&= \int_\Omega \alpha \Big( \frac{1-\rho_\infty}{\rho_\infty} \big( e(\rho\vert \rho_\infty) + \rho - \rho_\infty \big)
	+ \frac{\rho}{\rho_\infty} \big( e(1-\rho_\infty \vert 1- \rho) + 1-\rho_\infty-(1-\rho)\big) 
	\\
	&\qquad\qquad +  \frac{1-\rho}{\rho_\infty} \big( e(\rho_\infty \vert \rho) + \rho_\infty - \rho \big)
	+ e(1-\rho \vert 1- \rho_\infty ) - \rho + \rho_\infty \Big) ~ dx. 
\intertext{Using the definition of the relative entropy \eqref{1CRelativeEntropy}, we obtain}
	&\geq \tilde{C} E(\rho\vert \rho_\infty) + \hat{C} E(\rho_\infty \vert \rho) 
	+ \int_\Omega \alpha ~  \frac{\rho-\rho_\infty}{\rho_\infty} ~ ( 1- \rho_\infty + \rho - 1 + \rho -\rho_\infty) ~ dx
	\\
	&= \tilde{C}  E(\rho\vert \rho_\infty) + \hat{C} E(\rho_\infty \vert \rho) 
	+ 2 \int_\Omega \alpha ~ \frac{(\rho-\rho_\infty)^2}{\rho_\infty} ~ dx
	\geq \tilde{C} E(\rho\vert \rho_\infty),
\end{align*}
where we used the nonnegativity of the relative entropy and with $\tilde{C} = \alpha \min \lbrace 1 , \inf \frac{1-\rho_\infty}{\rho_\infty} \rbrace$.
With Gronwall's lemma and the Czisz\'{a}r-Kullback-Pinsker inequality in lemma \ref{CsiszarKullbacknotprobabilitymeasures}, we finally achieve
\begin{align}
	\label{1CGronwall}
	\vert\vert \rho - \rho_\infty \vert\vert^2_{L^1(\Omega)} 
	\leq 2 E(\rho\vert\rho_\infty)
	\leq 2 E(\rho_0\vert\rho_\infty)e^{-\tilde{C}t}.
\end{align}

\subsection{Numerical Solution}

Even though we are now dealing with a nonlinear equation, we again use the fully explicit scheme of section \ref{sec:Numerik1A} and obtain the following results.

\subsubsection{Analysis of the Time Evolution}

In Figure \ref{1CResultNumericalSolution} we show the time evolution of $\rho(x,t)$ solving \eqref{Fokker-Planck1C} compared to the solution \eqref{1CStationarySolution} with $\alpha =1, \beta=0.9$, initial concentration $\rho_0(x) = -(x-0.5)^2+1$, potential $V(x)=x$ and $\Omega = [0,1]$.
We chose this particular initial particle concentration (see Figure \ref{1CResultNumericalSolution} (a)) as it has a completely different shape compared to the stationary solution and secondly has the value 1 at $x=0.5$, so that at one point of the domain the density constraint of 1 is reached. 
We did not chose the same initial function as in the two previous sections as this initial function does not fulfill the box constraint.
In Figure \ref{1CResultNumericalSolution} (b) and (c) the effect of the diffusion becomes visible as it has flatten the particle concentration and the effect of the drift effect as there are more particles at the right part of the domain than in the left part.
Finally in Figure \ref{1CResultNumericalSolution} (d) there is no difference between the stationary solution and the concentration visible.
In comparison to the results of the previous model one can see that the density constraint of 1 is never overstepped.

\begin{figure}[t]
	\begin{center}
		\begin{tabular}{cc}
\includegraphics[width=0.40\textwidth]{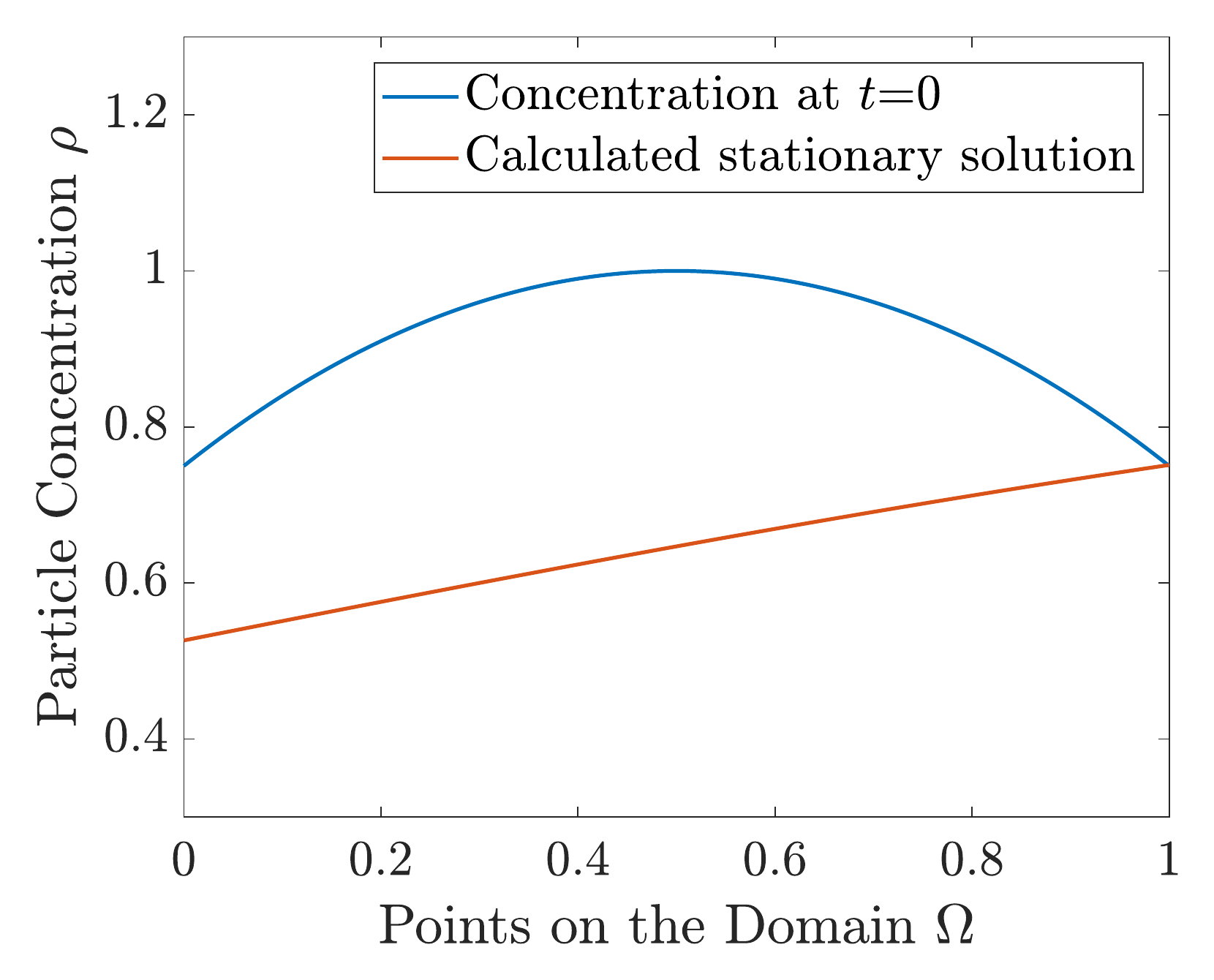} &
\includegraphics[width=0.40\textwidth]{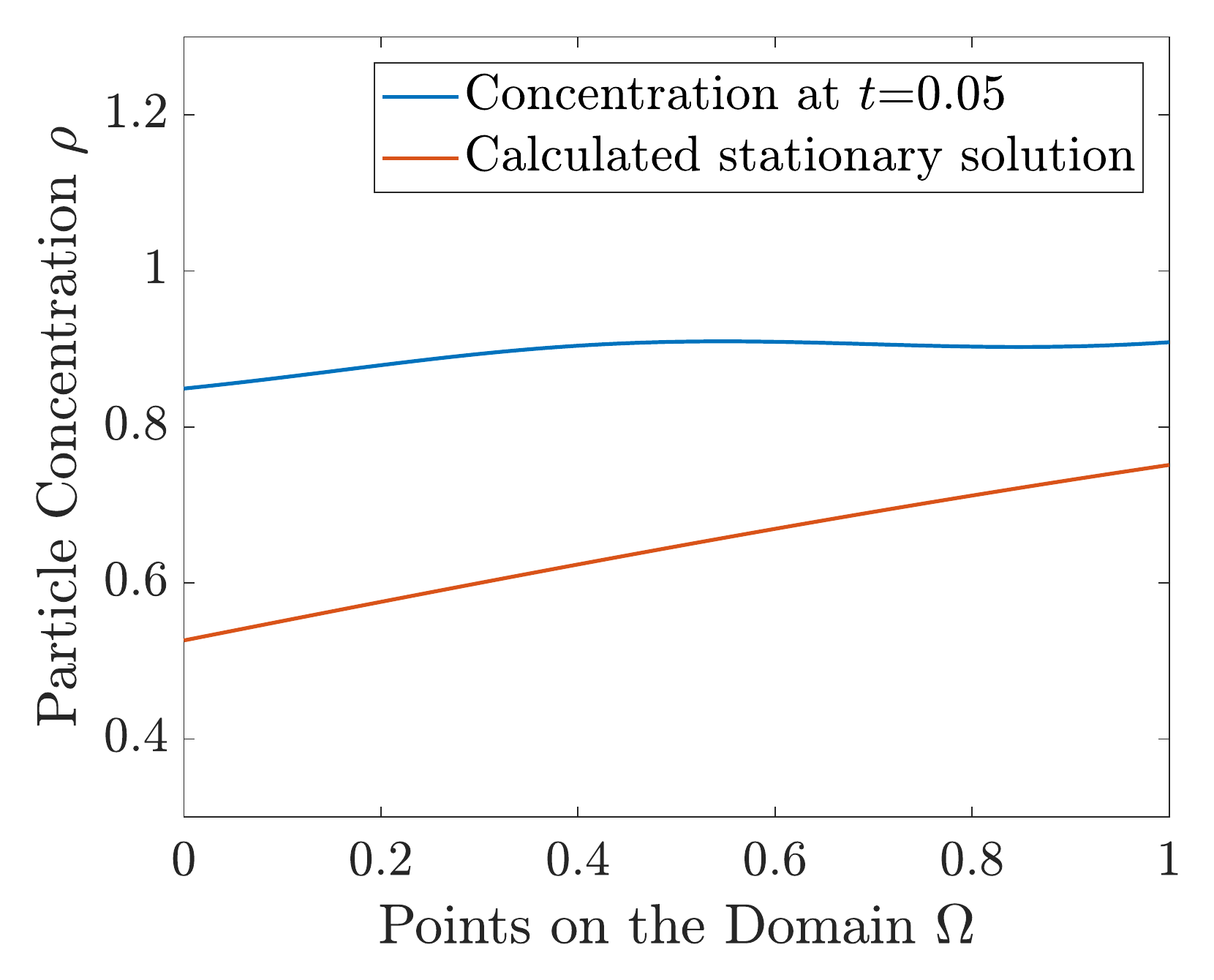}\\
			(a) & (b) \\
\includegraphics[width=0.40\textwidth]{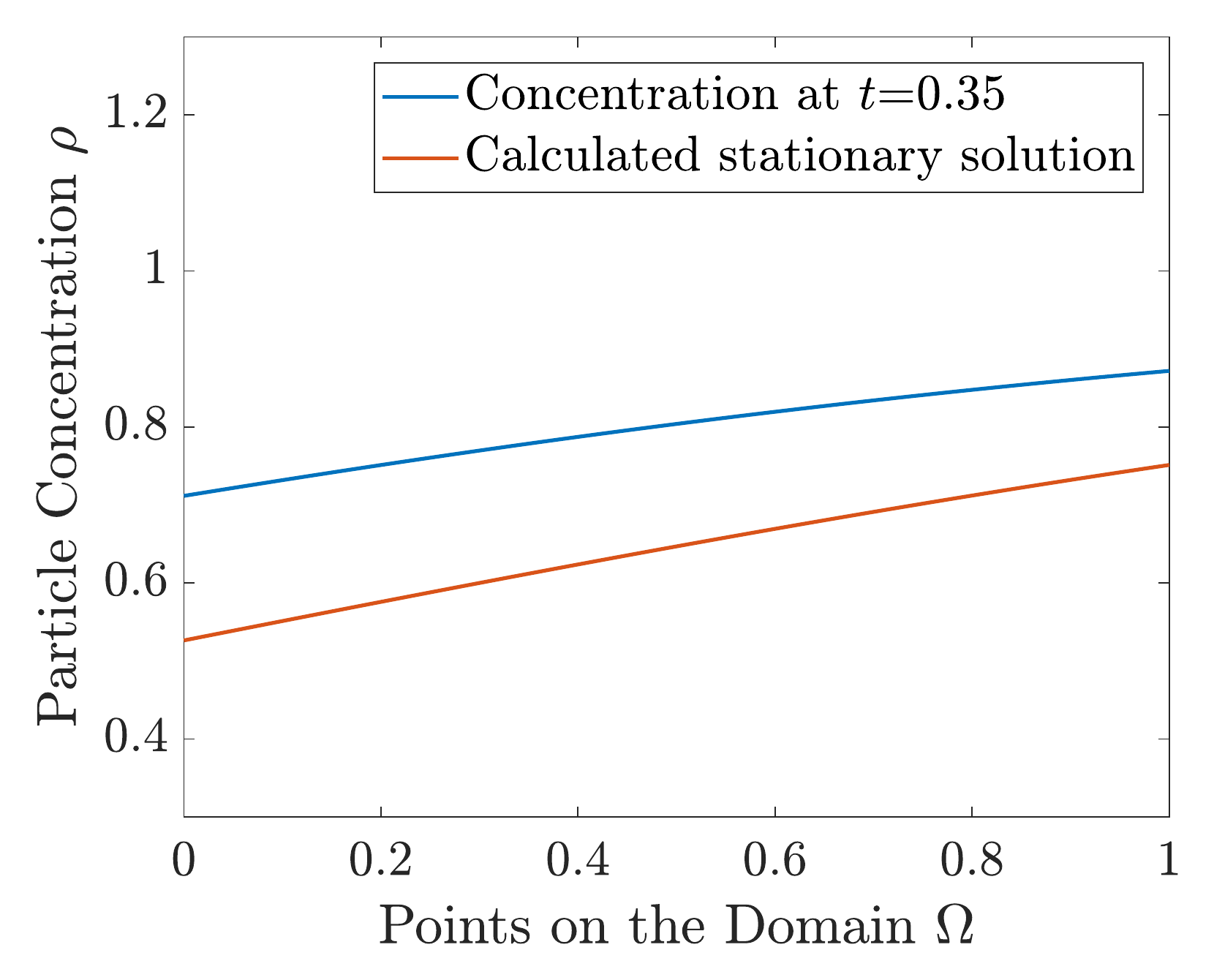} &
\includegraphics[width=0.40\textwidth]{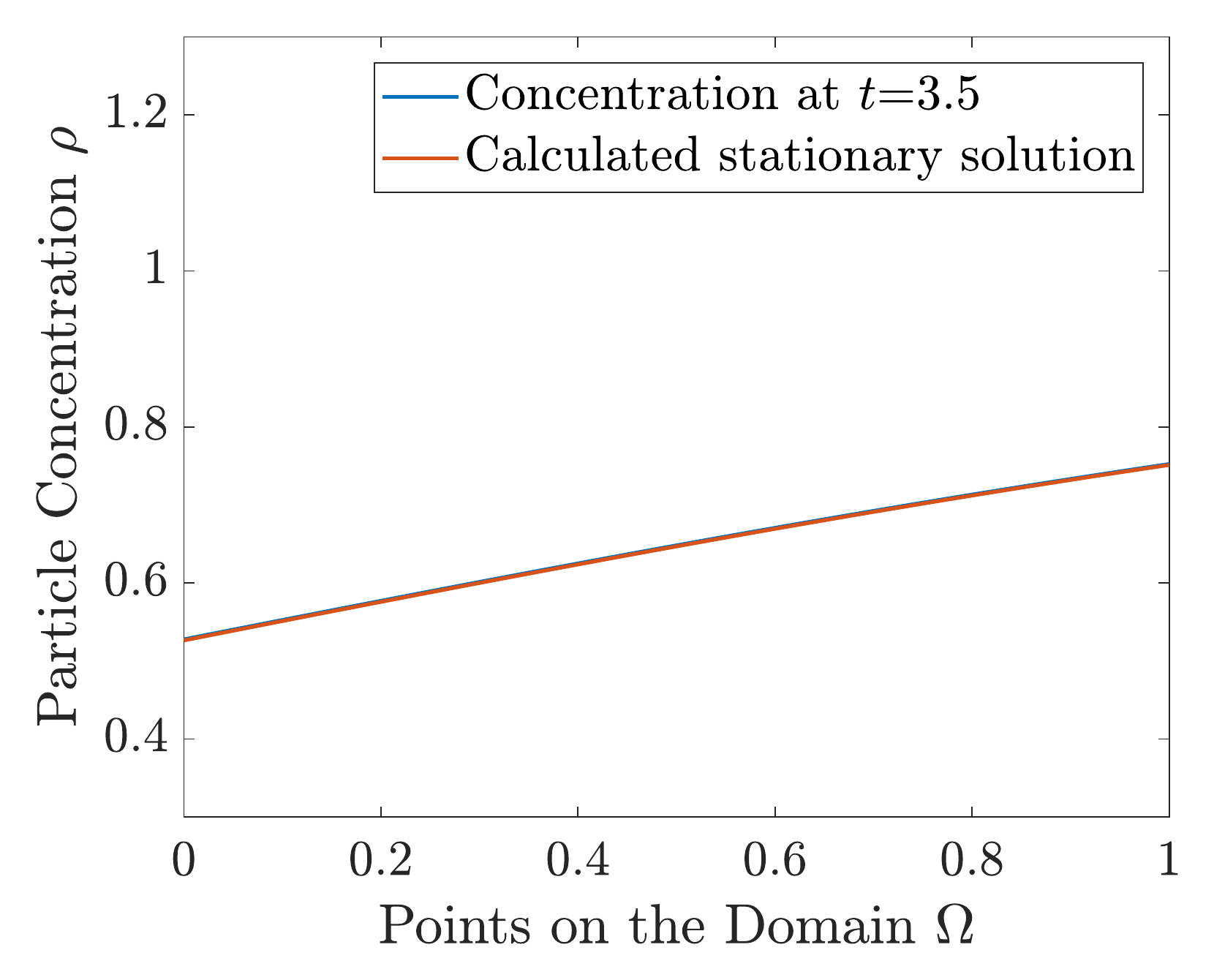} \\
			(c) & (d)		
		\end{tabular}
	\end{center}
	\caption{\textbf{Evolution over Time of the Particle Concentration:} $\rho(x,t)$ solving \eqref{Fokker-Planck1C} in comparison to the calculated solution \eqref{1CStationarySolution}  for $\alpha =1, \beta=0.9$, initial particle concentration $\rho_0(x) = -(x-0.5)^2+1$. (a) The initial concentration in comparison with the calculated stationary solution at $t=0$, (b) diffusion strongly visible $t=0.05$, (c) transport term strongly visible $t=0.35$, (d) equilibrium state at $t=3.7$.}
	\label{1CResultNumericalSolution}
\end{figure}

\subsubsection{Analysis of the Relative Entropy}

In Figure \ref{1CPlotRelativeEntropyfunction} the relative entropy for the one dimensional version of \eqref{Fokker-Planck1C} for $\alpha =1, \beta=0.9$, initial concentration $\rho_0(x) = -(x-0.5)^2+1$ and potential $V(x)=x$ can be seen.
To better compare the results with the one of the previous section we chose the same $\alpha$ and the same $\beta$.
Comparing the convergence velocity of this model with the previous one, one can see that the case without a density constraint obeys a quicker exponential decay.
This can be explained by the following intuition:
Figuratively the relative entropy measures the distance to an equilibrium state and the convergence rate how quick this status is reached.
In this setting the influx and the drift term are multiplied by the factor $(1-\rho)$ whereas in the previous section it was not, so they have less influence than in the previous model.

\begin{figure}[t]
	\begin{center}
		\includegraphics[width=0.38\textwidth]{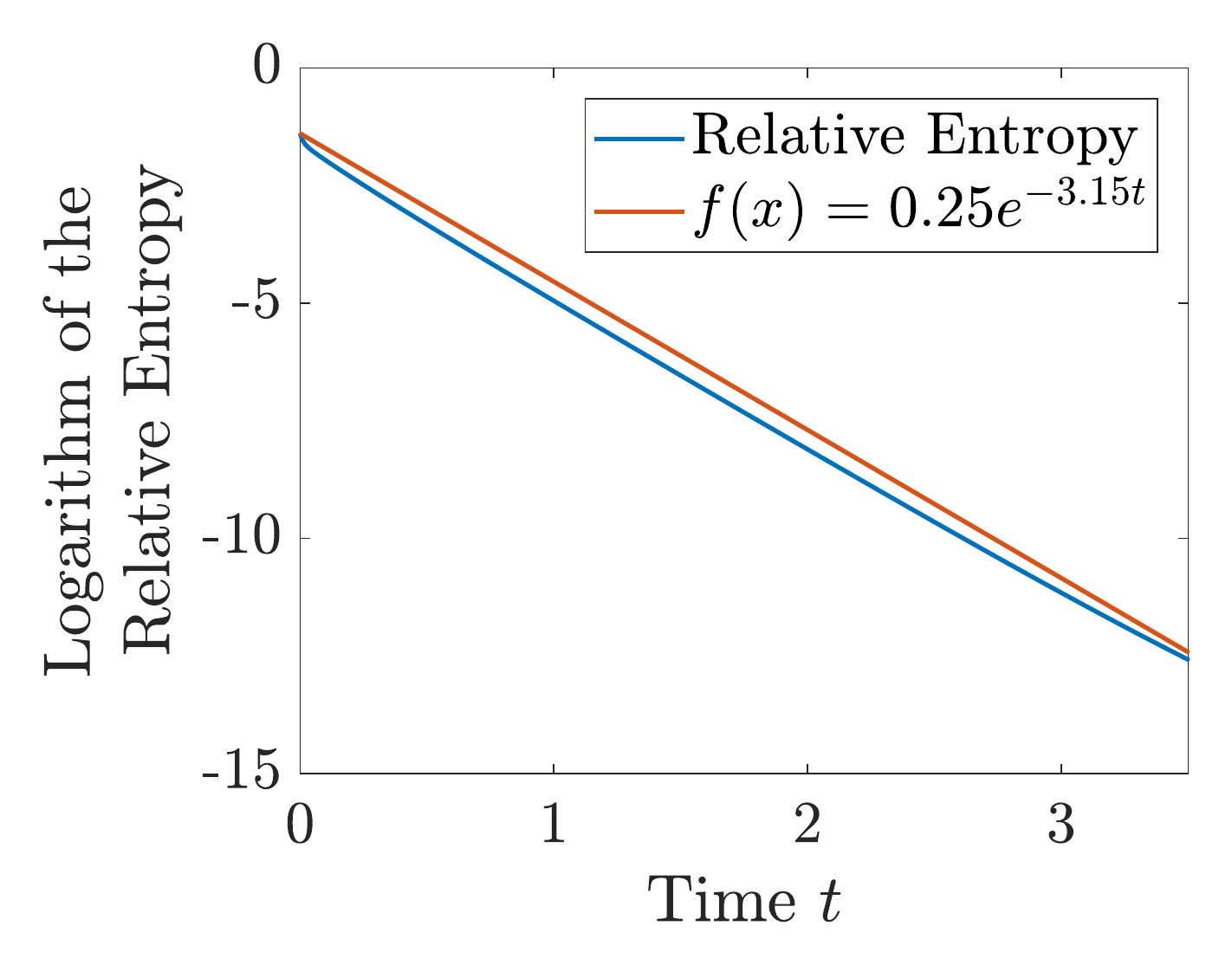} 
	\end{center}
\caption{\textbf{Relative Entropy:} The logarithm of the relative entropy for the one dimensional version of \eqref{Fokker-Planck1C} for $\alpha =1, \beta=0.9$ and $\rho_0(x) = -(x-0.5)^2+1$.}
	\label{1CPlotRelativeEntropyfunction}
\end{figure}

\vspace{2ex}
\noindent \textbf{Acknowledgments:} The authors acknowledge support by EXC 1003 Cells in Motion Cluster of Excellence, M\"unster, funded by the German science foundation DFG. MB was further supported by ERC via Grant EU FP7 - ERC Consolidator Grant 615216 LifeInverse.
The authors would like to thank Andreas P\"uschel and Danila di Meo (WWU M\"unster) for details on the biological background.

\bibliographystyle{alpha}
\bibliography{bib/bibliography.bib}

\end{document}